\RequirePackage{etoolbox}
\csdef{input@path}{%
 {sty/}
 {img/}
}%
\csgdef{bibdir}{bib/}

\documentclass[ba]{imsart}
\pubyear{0000}
\volume{00}
\issue{0}
\doi{0000}
\firstpage{1}
\lastpage{1}

\RequirePackage{natbib}
\usepackage{amsmath}
\usepackage{amssymb}
\usepackage{bm}
\usepackage{enumitem}
\usepackage{multirow}
\usepackage{graphicx}
\usepackage{subfigure}
\usepackage{mathrsfs}
\usepackage{amsthm}
\usepackage{lipsum}
\usepackage{amsthm}
\usepackage{amsmath}
\usepackage{natbib}
\usepackage[colorlinks,citecolor=blue,urlcolor=blue,filecolor=blue,backref=page]{hyperref}

\startlocaldefs
\numberwithin{equation}{section}
\theoremstyle{plain}
\theoremstyle{remark}
\newtheorem{theorem}{Theorem}[section]
\newtheorem{corollary}{Corollary}[section]
\newtheorem{lemma}[theorem]{Lemma}
\newtheorem{assumption}{Assumption}
\newtheorem{remark}{Remark}
\newtheorem{condition}{Condition}

\DeclareMathOperator*{\argmax}{arg\,max}
\endlocaldefs

\begin{document}
\begin{frontmatter}
\title{High-dimensional posterior consistency for hierarchical non-local priors in regression}
\runtitle{Posterior consistency for non-local priors in regression}

\begin{aug}
\author{\fnms{Xuan} \snm{Cao}\thanksref{addr1}\ead[label=e1]{xuan.cao@uc.edu}},
\author{\fnms{Kshitij} \snm{Khare}\thanksref{addr2}\ead[label=e2]{kdkhare@stat.ufl.edu}}
\and
\author{\fnms{Malay} \snm{Ghosh}\thanksref{addr2}\ead[label=e3]{ghoshm@ufl.edu}}

\runauthor{X. Cao et al.}

\address[addr1]{Department of Mathematical Sciences, University of Cincinnati,
    \printead*{e1} 
}

\address[addr2]{Department of Statistics, University of Florida,
	\printead{e2}
    \printead{e3}
}

\end{aug}

\begin{abstract}
The choice of tuning parameters in Bayesian variable selection is a critical problem in modern statistics. In particular, for Bayesian linear regression with non-local priors, the scale parameter in the non-local prior density is an important tuning parameter which reflects the dispersion of the non-local prior density around zero, and implicitly determines the size of the regression coefficients that will be shrunk to zero. Current approaches treat the scale parameter as given, and suggest choices based on prior coverage/asymptotic considerations. In this paper, we consider the fully Bayesian approach introduced in \citep{PhDthesis:Wu} with the pMOM non-local prior and an appropriate Inverse-Gamma prior on the tuning parameter to analyze the underlying theoretical property. Under standard regularity assumptions, we establish strong model selection consistency in a high-dimensional setting, where $p$ is allowed to increase at a polynomial rate with $n$ or even at a sub-exponential rate with $n$. Through simulation studies, we demonstrate that our model selection procedure can outperform other Bayesian methods which treat the scale parameter as given, and commonly used penalized likelihood methods, in a range of simulation settings.
\end{abstract}


\begin{keyword}
	\kwd{posterior consistency}
	\kwd{high-dimensional data}
	\kwd{non-local prior}
	\kwd{model selection}
	\kwd{multivariate regression}
\end{keyword}

\end{frontmatter}

\section{Introduction} \label{sec:introduction}

\noindent
The literature on Bayesian variable selection in linear regression is vast and rich. Many priors and methods have been proposed. \citet{George:McCulloch:1993} propose the stochastic search variable selection which uses the Gaussian distribution with a zero mean and a small but fixed variance as the spike prior, and another Gaussian distribution with a large variance as the slab prior. \citet*{ishwaran2005} also use Gaussian spike and slab priors, but with continuous bimodal priors for the variance of the regression coefficient to alleviate the difficulty of choosing specific prior parameters. \citet*{Narisetty:He:2014} introduce shrinking and diffusing priors as spike and slab priors, and establish model selection consistency of the approach in a high-dimensional setting. $g$-prior is introduced in \citep{Zeller:g}, and \citet{Liang:mixture:2008} further propose the mixture of $g$ priors based variable selection method and establish selection consistency. In recent years, the use of non-local priors in this context has generated a lot of interest. 

Non-local priors were first introduced by \citet{John:Rossell:non-localtesting:2010} as densities that 
are identically zero whenever a model parameter is equal to its null value in the context of hypothesis testing. Compared to 
local priors, which still preserve positive values at null parameter values, non-local prior distributions have relatively appealing 
properties for Bayesian model selection. In particular, non-local priors discard spurious covariates faster as the sample size 
$n$ grows, while preserving exponential learning rates to detect non-zero coefficients as indicated in 
\citep{John:Rossell:non-localtesting:2010}. These priors were further extended to Bayesian model selection problems in 
\citep{Johnson:Rossell:2012} by imposing non-local prior densities on a vector of regression coefficients.
Posterior distributions on the model space based on non-local priors were found to be more tightly concentrated around the maximum a posteriori (MAP) model than the posterior based on for example, $g$-priors, which tend to be more dispersed, implying that these non-local priors yield a faster rate of posterior concentration, as indicated in \citep{Shin.M:2015}.

In particular, let $\bm y_n$ denote a random vector of responses, $X_n$ an $n \times p$ design matrix of covariates, and 
$\bm \beta = (\beta_1, \beta_2, \ldots, \beta_p)$ a $p \times 1$ vector of regression coefficients. Under the linear regression 
model, 
$$
\bm y_n \sim N\left(X_n\bm \beta, \sigma^2I_n\right). 
$$
\noindent
In \citep{Johnson:Rossell:2012}, the authors introduce the product moment (pMOM) non-local prior with density
\begin{equation} \label{pmomdensity_introduction}
d_p(2\pi)^{-\frac p 2}(\tau \sigma^2)^{-rp - \frac p 2} |A_p| ^{\frac 1 2} \exp \left\{- \frac{\bm \beta_p ^ \prime A_p \bm \beta}{2 
	\tau \sigma ^2}\right\}\prod_{i =1}^{p} \beta_{i}^{2r}. 
\end{equation}
\noindent
Here $A_p$ is a $p \times p$ nonsingular matrix, $r$ is a positive integer referred to as the order of the density and 
$d_p$ is the normalizing constant independent of $\tau$ and $\sigma^2$. Variations of the density in 	
(\ref{pmomdensity_introduction}), called the piMOM and peMOM density, have also been developed in 
\citep{Johnson:Rossell:2012, RTJ:2013}. Clearly, the density in (\ref{pmomdensity_introduction}) 
is zero when any component of ${\bm \beta}$ is zero. Under appropriate regularity 
conditions, the authors in \citep{Johnson:Rossell:2012, Shin.M:2015} demonstrate that in high-dimensional settings, model 
selection procedures based on the pMOM and piMOM non-local prior densities can achieve strong model selection 
consistency, i.e, the posterior probability of the true model converges to $1$ as the sample size $n$ increases. 

As noted in \citep{Johnson:Rossell:2012}, the scale parameter $\tau$ is of particular importance, as it reflects the dispersion of 
the non-local prior density around zero, and implicitly determines the size of the regression coefficients that will be shrunk to 
zero. \citet{John:Rossell:non-localtesting:2010, Johnson:Rossell:2012}  treat $\tau$ as given and suggest 
a choice of $\tau$ which leads to a high prior probability for significant values of the regression coefficients. \citet{Shin.M:2015} again treat $\tau$ as given, and consider a setting where $p$ and $\tau$ vary with the 
sample size $n$. They show that high-dimensional model selection consistency is achieved under the peMOM prior (another variation of the priors above introduced in \citep{RTJ:2013}), as long as $\tau$ is of a larger order than $\log p$ and smaller order than $n$.  

 In the context of generalized linear model, similar to the development from $g$ prior in \citep{Zeller:g} to the mixture of $g$ prior in \citep{Liang:mixture:2008}, \citet{PhDthesis:Wu} further extends the work in \citep{Johnson:Rossell:2012,Shin.M:2015}  by proposing a fully Bayesian approach with the pMOM non-local prior and an appropriate Inverse-Gamma prior on the parameter $\tau$ referred to as the hyper-pMOM prior, following the
 nomenclature in \citep{PhDthesis:Wu}. In particular, \citet{PhDthesis:Wu} discusses the potential advantages of using hyper-pMOM priors and establish Bayes factor rates. 

The primary goal and innovation of this paper is to investigate the underlying model selection consistency for the hyper-pMOM priors in linear regression setting. The extra prior layer of prior, however, creates technical challenges for a high-dimensional theoretical consistency analysis. Under standard regularity assumptions, which include the prior over all models is restricted to ones with model size less than an appropriate function of the sample size $n$, we establish {\it posterior ratio consistency} (Theorem \ref{thm1}), i.e., the ratio of the 
maximum marginal posterior probability assigned to a ``non-true" model to the posterior probability assigned to the ``true" 
model converges to zero in probability. In particular, this implies that the true model will be the mode of the posterior 
distribution with probability tending to $1$ as $n \rightarrow \infty$. 

Next, under the additional assumption that $p$ increases 
at a polynomial rate with $n$, we show {\it strong model selection consistency} (Theorem \ref{thm2}). Strong model selection 
consistency implies that the posterior probability of the true model converges in probability to $1$ as $n \rightarrow \infty$. 
The assumption of restricting the prior over models with appropriately bounded parameter size, i.e., putting zero prior mass 
on unrealistically large models) has been used in both \citep{Narisetty:He:2014} and \citep{Shin.M:2015} for regression models. Based on reviewers' comments, we relax the polynomial rate restriction on $p$ to a sub-exponential rate by replacing the uniform type prior with a complexity prior on the model space to penalize larger models and establish model selection consistency under the complexity prior in Theorem \ref{thm4}. 
 
For the hyper-piMOM priors, \citet*{BW:2017} establish model selection consistency in the framework of generalized linear model. While there are some connections between our model and the one in \citep{BW:2017}, there are 
fundamental differences between the two models and the corresponding analyses. A detailed explanation of this is provided in Remark \ref{BW_comparison}. 

The rest of the paper is structured as follows. In Section \ref{sec:model specification} we provide our hierarchical fully 
Bayesian model. Model selection consistency results are stated in Section \ref{sec:model selection consistency}, and the 
proofs are provided in Section \ref{sec:modelselectionproofs}. Section \ref{sec:complexity} establishes the model selection consistency under the complexity prior. Details about how to approximate the posterior density for 
model selection are demonstrated in Section \ref{sec:computation}. In Section \ref{sec:experiments} and Section \ref{sec:real}, via simulation studies and real data analysis, we illustrate the model selection consistency result, and demonstrate the benefits of model selection using the fully 
Bayesian approach as compared to approaches which treat $\tau$ as given, and existing penalized likelihood approaches. We end our paper with a discussion in Section \ref{sec:discussion}.

\section{Model specification} \label{sec:model specification}

\noindent
We start by considering the standard Gaussian linear regression model with $p$ coefficients and by introducing some required 
notation. Let $\bm y_n$ denote a random vector of responses, $X_n$ an $n \times p$ design matrix of covariates, and $\bm \beta$ 
a $p \times 1$ vector of regression coefficients. Our goal is variable selection, i.e., to correctly identify all the non-zero regression  
coefficients. In light of that, we denote a model by $\bm k = \left\{k_1, k_2, \ldots, k_m\right\}$ if and only if all the non-zero 
elements of $\bm \beta$ are $\beta_{k_1}, \beta_{k_2}, \ldots, \beta_{k_m}$ and denote $\bm \beta_k = \left(\beta_{k_1}, 
\beta_{k_2}, \ldots, \beta_{k_m}\right)^T.$ For any $p \times p$ matrix $A$, let $A_k$ represent the submatrix formed from the 
columns of $A$ corresponding to model $\bm k$. In particular, Let $X_k$ denote the design matrix formed from the columns of 
$X_n$ corresponding to model $\bm k$. For the rest of the paper, simply let $k = |\bm k|$ represent the cardinality of model ${\bm k}$ for notational 
convenience. 

The class of pMOM densities (\ref{pmomdensity_introduction}) can be used for model selection through the following hierarchical 
model.
\begin{align} \label{modelspecification}
  &\bm{Y}_n \mid \bm \beta_k, \sigma^2, \bm k \sim N(X_k \bm \beta_k, \sigma^2 I_n),\\
 &\pi\left(\bm \beta_k \mid \tau, \sigma^2, \bm k\right) = d_k(2\pi)^{-\frac k 2}(\tau \sigma^2)^{-rk - \frac k 2} |A_k| ^{\frac 1 2} \exp \left\{- \frac{\bm \beta_k ^ \prime A_k \bm \beta_k}{2 \tau \sigma ^2}\right\}\prod_{i =1}^{k} \beta_{k_i}^{2r}, \label{model:pmom}\\
  &\pi(\tau) = \frac{\left(\frac n 2\right)^{\frac 1 2}}{\Gamma(\frac 1 2)} \tau^{- \frac 3 2} e^{-\frac{n}{2\tau}}, \label{model:tau}\\
  &\pi\left(\sigma^2\right) = \frac{\left( \alpha_2 \right)^{\alpha_1}}{\Gamma(\alpha_1)} \left(\sigma^2\right)^{-(\alpha_1 + 1)} e^{-\frac{\alpha_2}{\sigma^2}}. \label{model:5}
\end{align}
Note that in the currently presented hierarchical model, no specific form/condition has yet been assigned to the prior over the space of models. 
Some standard regularity assumptions for this prior will be provided later in Section \ref{sec:model selection consistency}. 
\begin{figure}[h] 
	\centering
	\includegraphics[width=95mm,height=50 mm]{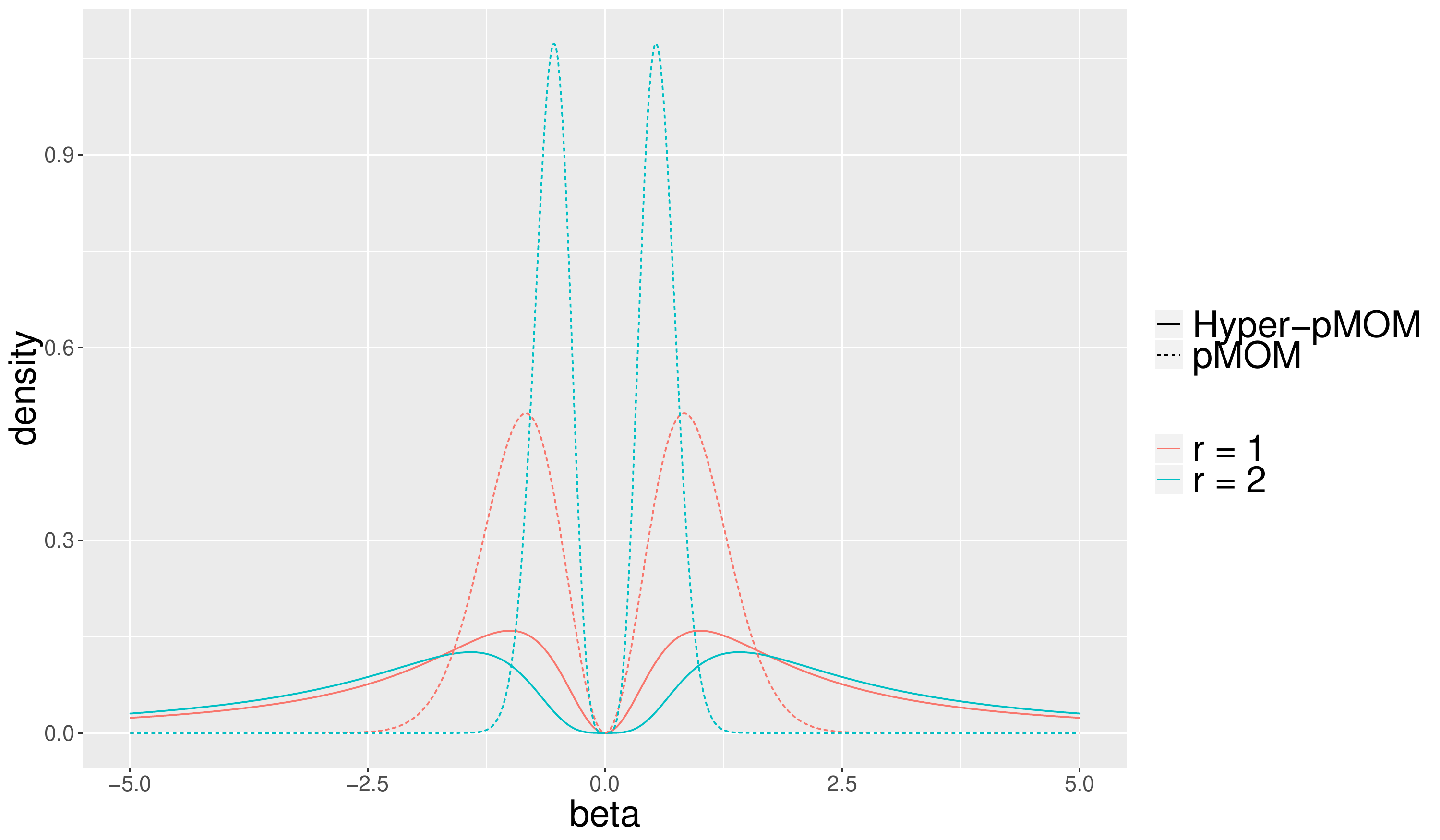}
	\caption{Comparison: Hyper-pMOM and pMOM when $p = 1$.}
	\label{fig:one_dimension}
\end{figure} 
\begin{figure} [h]
	\centering
	\begin{subfigure}[pMOM]
		{\includegraphics[width=45mm]{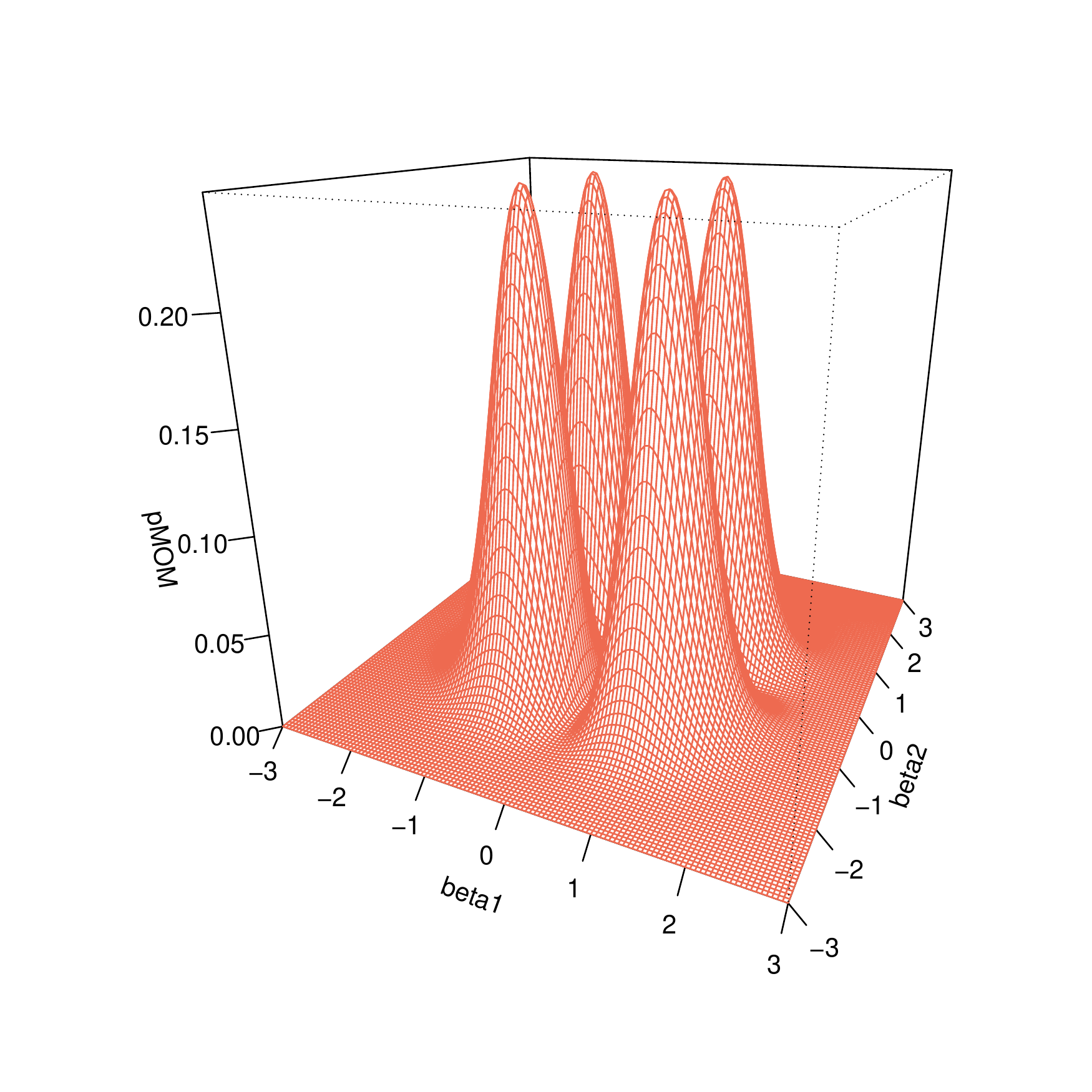}}
	\end{subfigure}
	\begin{subfigure}[Hyper-pMOM]
		{\includegraphics[width=45mm]{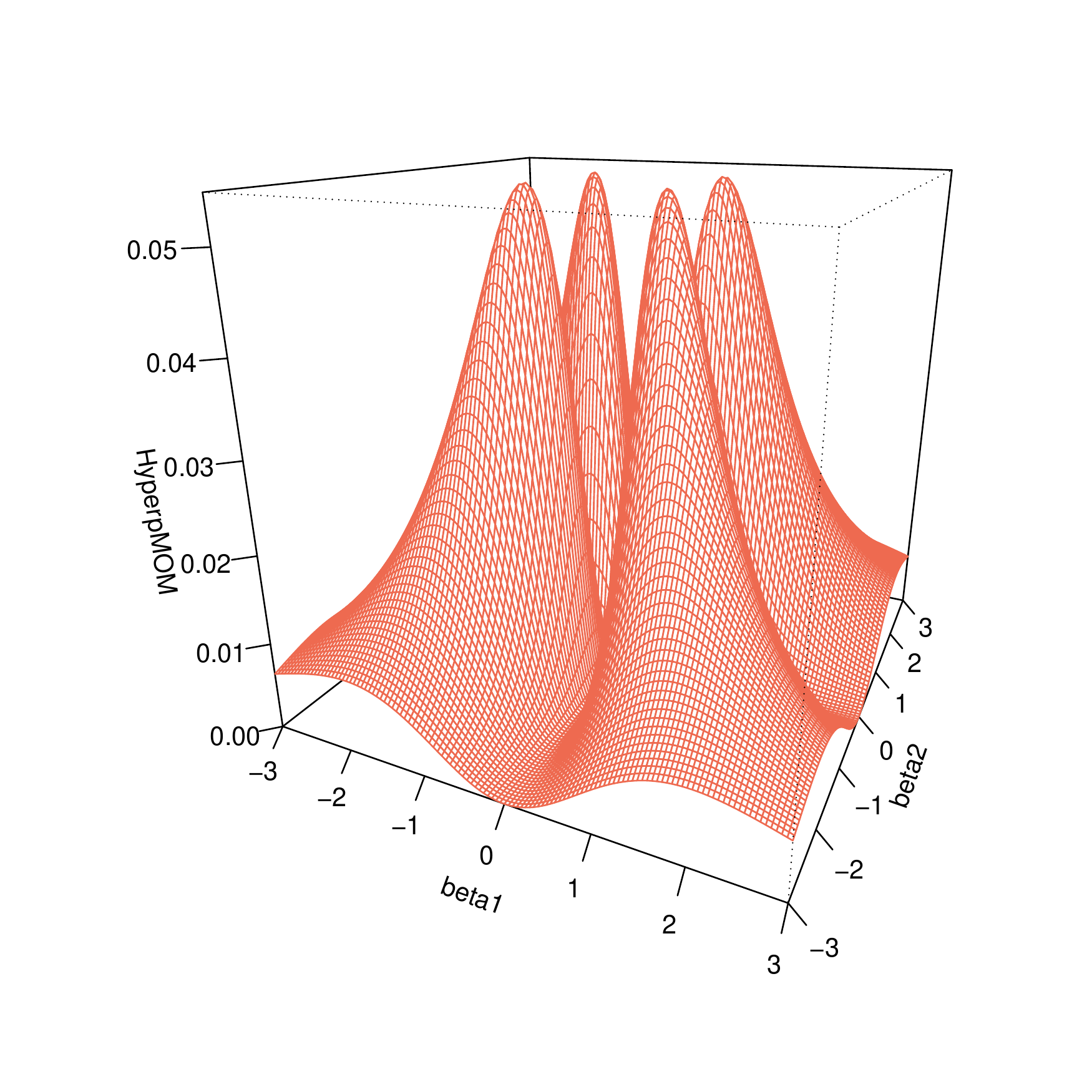}}
	\end{subfigure}	
	\caption{Comparison: Hyper-pMOM and pMOM when $p = 2$.}
	\label{fig:two_dimension}
\end{figure} 
Following the nomenclature in \citep{PhDthesis:Wu}, we refer to the mixture of priors in (\ref{model:pmom}) and (\ref{model:tau}) as the hyper-pMOM prior. In particular, one can show the implied marginal density of $\bm \beta_k$ after integrating out $\tau$ have the following expression
\begin{equation} \label{marginal_beta}
\pi\left(\bm \beta_k \mid \sigma^2, \bm k\right) = \frac{\left(\frac n 2\right)^{\frac 1 2}}{\Gamma(\frac 1 2)} \frac{\Gamma(rk+\frac k 2 +\frac 1 2)}{(\frac{n}{2} + \frac{\bm \beta_k ^ \prime A_k \bm \beta_k}{2\sigma^2})^{rk + \frac k 2 + \frac 1 2}}d_k(2\pi)^{-\frac k 2}\sigma^{-2rk -  k} |A_k| ^{\frac 1 2} \prod_{i =1}^{k} \beta_{k_i}^{2r}.
\end{equation}
Note that compared to the pMOM density in (\ref{pmomdensity_introduction}) with given $\tau$, $\pi\left(\bm \beta_k \mid \sigma^2, \bm k\right)$ now possesses thicker tails, which induces prior dependence. See Figure \ref{fig:one_dimension} and Figure \ref{fig:two_dimension}, where we plot the marginal density $\pi\left(\bm \beta_k \mid \sigma^2, \bm k\right)$ when $A_p = 1$, $\sigma^2 = 1$ and $n = 1$ for the univariate and bivariate case, respectively. In addition, the hyper-pMOM prior could achieve better model selection performance especially for small samples.  See for example \citep{Liang:mixture:2008} that investigates the finite sample performance for hyper-$g$ priors. 

By (\ref{modelspecification}) and Bayes' rule, the resulting posterior probability for model $\bm k$ is denoted by,
\begin{align} \label{model posterior}
\pi(\bm k | \bm y_n) = \frac{\pi(\bm k)}{\pi(\bm y_n)}m_{\bm k}(\bm y_n),
\end{align}
where $\pi(\bm y_n)$ is the marginal density of $\bm y_n$, and $m_{\bm k}(\bm y_n)$ is the marginal density of $\bm y_n$ under 
model $\bm k$ given by,
\begin{align} \label{marginal density}
&m_{\bm k}(\bm y_n) \nonumber\\
=&\int_{0}^{\infty}\int_{0}^{\infty}\pi\left(\bm{y}_n \mid \bm \beta_k, \sigma^2, \bm k\right)\pi\left(\bm \beta_k \mid \tau, \sigma^2, 
\bm k\right)\pi(\tau)\pi\left(\sigma^2\right)d\bm \beta_k d\sigma^2 d\tau \nonumber\\
=& \frac{\left(\frac n 2\right)^{\frac 1 2}}{\Gamma(\frac 1 2)} \frac{\left( \alpha_2 \right)^{\alpha_1}}{\Gamma(\alpha_1)}\int_{0}^{\infty}
\int_{0}^{\infty} d_k(2\pi)^{-\frac k 2}(\tau \sigma^2)^{-rk - \frac k 2} |A_k| ^{\frac 1 2} \exp \left[- \frac{\bm \beta_k ^ \prime A_k \bm 
	\beta_k}{2 \tau \sigma ^2}\right]\prod_{i =1}^{k} \beta_{k_i}^{2r} \nonumber\\
&\times \frac{1}{\left(2\pi\sigma^2\right)^{\frac n 2}}\exp\left\{-\frac{(\bm y_n - X_k\bm \beta_k)^T(\bm y_n - X_k\bm \beta_k)}
{2\sigma^2}\right\}  \tau^{- \frac 3 2} e^{-\frac{n}{2\tau}} \left(\sigma^2\right)^{-(\alpha_1 + 1)} e^{-\frac{\alpha_2}{\sigma^2}} d\bm 
\beta_k d\sigma^2 d\tau \nonumber\\
=&  d_k\frac{\frac{\left(\frac n 2\right)^{\frac 1 2}}{\Gamma(\frac 1 2)}}{(\sqrt{2\pi})^n}\frac{\left( \alpha_2 \right)^{\alpha_1}}
{\Gamma(\alpha_1)} |A_k|^{\frac 12} \nonumber\\
&\times \int_{0}^{\infty}\int_{0}^{\infty} (\sigma^2)^{-\left(\frac n 2 + rk + \alpha_1 + 1\right)} \exp\left\{-\frac{R_k + 2\alpha_2}
{2\sigma^2}\right\} \tau^{-rk - \frac k 2 - \frac 3 2}e^{-\frac{n}{2\tau}}\frac{E_k(\prod_{i=1}^{k}\beta_{k_i}^{2r})}{|C_k|^{\frac 1 2}}
d\sigma^2 d\tau,
\end{align} 

\noindent
where $C_k = X_k^TX_k + \frac {A_k}{\tau}, R_{\bm k} =\bm y_n^T(I_n - X_k C_k^{-1}X_k^T)\bm y_n,$ and $E_k(.)$ denotes the 
expectation with respect to a multivariate normal distribution with mean $\tilde{\bm \beta_k} = C_k^{-1}X_k^T\bm y_n$, and 
covariance matrix $V = \sigma^2C_k^{-1}$. In particular, these posterior probabilities can be used to select a model by computing 
the posterior mode defined by
\begin{equation} \label{a4}
\hat{\bm k} =  \argmax_{\bm k} \pi({\bm k}|\bm y_n).
\end{equation}

\section{Model selection consistency: main results} \label{sec:model selection consistency}

\noindent
In this section we will explore the high-dimensional asymptotic properties of the Bayesian model selection approach specified in 
Section \ref{sec:model specification}. In particular, we consider a setting where the number of regression coefficients $p = p_n$ 
increases with the sample size $n$. The true data generating mechanism is given by $Y_n = X_n \bm \beta_0 + 
\boldsymbol{\epsilon}_n$. Here $\bm \beta_0$ is the true $p_n$ dimensional vector of regression coefficients, whose dependence 
on $n$ is suppressed for notational convenience, and the entries of $\boldsymbol{\epsilon}_n$ are i.i.d Gaussian with mean zero 
and variance $\sigma_0^2$. As in \citep{Johnson:Rossell:2012}, we assume that the true vector of regression coefficients is sparse, i.e., all the entries of 
$\bm \beta_0$ are zero except those corresponding to a subset ${\bm t} \subseteq \{1,2, \ldots, p_n\}$, and ${\bm t}, 
\bm \beta_{0,t}, \sigma_0^2$ do not vary with $n$. Our results can be easily extended to the case where $|{\bm t}|$, and entries of 
$\bm \beta_{0,t}$ and $\sigma_0^2$ vary with $n$ but stay bounded. However, we assume these quantities stay fixed for ease 
of exposition. 

For any $p \times p$ symmetric matrix $A$, let $eig_1(A) 
\le eig_2(A) \ldots \leq eig_p(A)$ be the ordered eigenvalues of $A$ and denote the $j$-th largest nonzero eigenvalue as $\nu_j(A)$. Let $\lambda_k^m = \min_{1 \le j \le \min(n,k)} \nu_j\left(\frac{X_k^TX_k}{n}\right)$ and $\lambda_k^M = \max_{1 \le j \le \min(n,k)} \nu_j\left(\frac{X_k^TX_k}{n}\right)$, respectively. In order to establish our asymptotic results, we need the following mild regularity assumptions.
\begin{assumption}\label{assumptiontruemodel}
	There exist $\epsilon_n < 1$, such that $0< \epsilon_n \le \lambda_k^m \le \lambda_k^M \le \epsilon_n^{-1}$, where $\epsilon_n^{-1} = O\left((\log n)^{\frac 1 8}\right)$.
\end{assumption}
\begin{assumption}\label{assumptionpvalue}
	$p = O\left(n^{\gamma}\right),$ where $\gamma < r.$ 
\end{assumption}
\begin{assumption}\label{assumptionmodelsize}
	$\pi(\bm k) = 0$ for all $|\bm k| > q_n,$ where $q_n = O\left(n^\xi\right)$ and $\xi < 1$. 
\end{assumption}
\begin{assumption}\label{assumptionprior}
	There exists a constant $\omega > 0$, such that $\frac{\pi(\bm t)}{\pi(\bm k)} > \omega$ for every ${\bm k}$ with 
	$\pi({\bm k}) > 0$. 
\end{assumption}
\begin{assumption}\label{assumptionhyper}
	For every $n \ge 1$, the hyper-parameter for the non-local pMOM prior in \ref{modelspecification} satisfy $0 < a_1 < eig_1(A_p) \le eig_2(A_p) \le \ldots \le eig_p(A_p) < a_2 < \infty$. Here $a_1, a_2$ are constants not depending on $n$.
\end{assumption}

\noindent
\citet{Johnson:Rossell:2012} assume all the eigenvalues of $\frac{X^TX}{n}$ to be bounded by a constant, which is unrealistic to achieve  in the high-dimensional setting. In our work, Assumption \ref{assumptiontruemodel} assumes that non-zero eigenvalues of any sub-matrices of the design matrix not to be bounded by a constant, but to be uniformly bounded over a function of $n$. Assumption \ref{assumptionhyper} is standard which assumes the prior covariance matrix are uniformly bounded in $n$. Note that for the default value of $A_p = I_p$, Assumption \ref{assumptionhyper} is immediately satisfied. Assumption \ref{assumptionmodelsize} states that the prior on the space of the $2^{p_n}$ possible models, places zero mass on unrealistically large models (identical to Assumption in \citep{Shin.M:2015}). 
Assumption \ref{assumptionprior} states that the ratio of the prior probabilities assigned to the true model and any non-true 
model stays bounded below in $n$ (identical to Assumption in \citep{Johnson:Rossell:2012}). This type of priors have also been considered in \citep{Liang:2015} and \citep{Shin.M:2015}. Assumption \ref{assumptionpvalue} states that 
$p$ can grow at an appropriate polynomial rate with $n$. In Section \ref{sec:complexity}, we also give the consistency results under the complexity priors on the model space, which penalize larger models, and consequently relax the assumption on the rate at which $p$ can be growing.

We now state and prove the main model selection consistency results. Our first result establishes what we refer to as posterior 
ratio consistency. This notion of consistency implies that the true model will be the mode of the posterior distribution among all the 
models with probability tending to $1$ as $n \rightarrow \infty.$
\begin{theorem}[Posterior ratio consistency for hyper-pMOM priors] \label{thm1}
	Under Assumptions \ref{assumptiontruemodel}, \ref{assumptionmodelsize}, \ref{assumptionprior} and \ref{assumptionhyper}, for the hierarchical model in (\ref{modelspecification}) to (\ref{model:5}) with hyper-pMOM priors, the following holds: 
	$$
	\max_{\bm k \ne {\bm t}} \frac{\pi(\bm k|\bm y_n)}{\pi(\bm t|\bm y_n)} \rightarrow 0, \quad \mbox{as } n \rightarrow \infty. 
	$$
	
	\noindent
	In particular, it implies that the probability that the posterior mode $\hat{\bm k}$ defined in (\ref{a4}) is equal to 
	the true model $\bm t$ will converge to $1$, i.e., 
	$$
	P(\bm t = \argmax_{\bm k} \pi(\bm k|\bm y_n)) \rightarrow 
	1, \quad \mbox{as } n \rightarrow \infty. 
	$$ 	
\end{theorem}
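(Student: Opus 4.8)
The plan is to pass from the posterior ratio to the marginal likelihood ratio, reduce the double integral in (\ref{marginal density}) to a tractable leading-order expression, and then split the non-true models into two structurally different classes. First, since the normalizing constant $\pi(\bm y_n)$ in (\ref{model posterior}) cancels,
$$
\frac{\pi(\bm k \mid \bm y_n)}{\pi(\bm t \mid \bm y_n)} = \frac{\pi(\bm k)}{\pi(\bm t)}\,\frac{m_{\bm k}(\bm y_n)}{m_{\bm t}(\bm y_n)},
$$
and Assumption \ref{assumptionprior} bounds the prior ratio uniformly by $1/\omega$, so it suffices to show that $\max_{\bm k \ne \bm t} m_{\bm k}(\bm y_n)/m_{\bm t}(\bm y_n) \to 0$ in probability.

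Next I would evaluate the integrals in (\ref{marginal density}). The $\sigma^2$-integral is of Inverse-Gamma type once the $\sigma^2$-dependence hidden inside $E_k(\prod_i \beta_{k_i}^{2r})$ (a Gaussian moment, hence a polynomial in the entries of $\tilde{\bm\beta}_k$ and of $\sigma^2 C_k^{-1}$) is made explicit; carrying this out produces a factor proportional to $(R_{\bm k}+2\alpha_2)^{-(n/2+\cdots)}$ times polynomial corrections. The genuinely new difficulty is the $\tau$-integral: because $C_k = X_k^T X_k + A_k/\tau$ depends on $\tau$, the term $E_k(\cdot)/|C_k|^{1/2}$ does not factor out and the hyper-prior layer must be integrated jointly. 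I would control this by using Assumptions \ref{assumptiontruemodel} and \ref{assumptionhyper} to sandwich the eigenvalues of $C_k$ between $n\epsilon_n + a_1/\tau$ and $n\epsilon_n^{-1}+a_2/\tau$, so that $|C_k|$, $C_k^{-1}$, and $\tilde{\bm\beta}_k = C_k^{-1}X_k^T \bm y_n$ admit upper and lower bounds that are uniform over all models with $k \le q_n$; a Laplace-type approximation (or matching upper and lower bounds) around the maximizing value of $\tau$ then gives control on the $\tau$-integral sharp enough to expose the penalty structure of the non-local prior.

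With a workable bound on $m_{\bm k}/m_{\bm t}$ in hand, I would split the maximum over $\bm k \ne \bm t$ into underfitted models ($\bm t \not\subseteq \bm k$) and overfitted models ($\bm t \subsetneq \bm k$). For an underfitted model at least one true signal is unexplained, so the residual quadratic form satisfies $R_{\bm k} - R_{\bm t} \ge c\,n$ for some constant $c>0$ with probability tending to one; since the leading term behaves like $(R_{\bm t}/R_{\bm k})^{n/2}$, the ratio decays exponentially in $n$, and this rate dominates the number of such models (bounded through Assumption \ref{assumptionmodelsize}) after a chi-square concentration and union-bound argument. For an overfitted model the residuals are nearly equal, $0 \le R_{\bm t}-R_{\bm k} = O_p(\log n)$, so the decay must instead come from the non-local penalty: each spurious variable $k_i$ has estimate $\tilde\beta_{k_i}\approx 0$, and the factor $\prod_i \beta_{k_i}^{2r}$ in $E_k(\cdot)$ together with the $\tau^{-rk}$, $(\sigma^2)^{-rk}$ and $|C_k|^{-1/2}$ normalizers contributes a quantity polynomially small in $n$ per extra variable; since $k\le q_n$, the product over the spurious variables still tends to zero uniformly.

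The main obstacle is the joint integration over $\tau$ and $\sigma^2$ forced by the extra hyper-prior layer: unlike the fixed-$\tau$ analyses of \citep{Johnson:Rossell:2012, Shin.M:2015}, the matrix $C_k$ is now $\tau$-dependent, so I cannot evaluate the Gaussian moment $E_k(\prod_i \beta_{k_i}^{2r})$ once and for all, and I must obtain bounds on the integrand that are simultaneously sharp in $n$ and uniform over the model space. The overfitted case is the most delicate part, because there the exponential residual gap is absent and the entire $o(1)$ decay must be squeezed out of the non-local moment term and the normalizing constants; keeping the $\tau$-dependence of $C_k$ and of $\tilde{\bm\beta}_k$ under sufficiently tight control throughout this step, uniformly over all models of size at most $q_n$, is where the technical work concentrates. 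Finally, the stated corollary about the posterior mode is immediate, since $\max_{\bm k\ne\bm t}\pi(\bm k\mid\bm y_n)/\pi(\bm t\mid\bm y_n)\to 0$ implies $\pi(\bm t\mid\bm y_n) > \pi(\bm k\mid\bm y_n)$ for all $\bm k \ne \bm t$ with probability tending to one.
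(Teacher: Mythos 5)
Your plan matches the paper's proof in all essentials: reduce to the marginal ratio $m_{\bm k}/m_{\bm t}$ via Assumption \ref{assumptionprior}, integrate out $\sigma^2$ and $\tau$ using the eigenvalue sandwich $n\epsilon_n + a_1/\tau \preceq C_k \preceq n\epsilon_n^{-1} + a_2/\tau$ to get a bound in terms of $R_t^*$ and $R_k^*$ (the paper's Lemma \ref{lm1}), then split into $\bm k \nsupseteq \bm t$, handled by a noncentral $\chi^2$ lower bound on $R_k^* - R_u^*$ with noncentrality of order $n\epsilon_n$ (Lemma \ref{lm4}/\ref{lm3}), and $\bm k \supset \bm t$, handled by the polynomial-in-$n$ penalty per spurious variable coming from the pMOM moment term (Lemma \ref{lm5}). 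The only cosmetic differences are that the paper converts the exponential residual gap into a polynomial factor $n^{-2rt}$ rather than keeping it exponential, and that the union bound over models you mention is only needed for Theorem \ref{thm2}, not for the max in Theorem \ref{thm1}.
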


\noindent
We would like to point out that posterior ratio consistency 
(Theorems \ref{thm1}) does not require any restriction on the number of predictors. This requirement is only needed for strong selection consistency (Theorem \ref{thm2}). Next, we establish a stronger result which implies that the posterior mass assigned to the true model $\bm t$ converges to $1$ in 
probability. We refer to this notion of consistency as strong selection consistency. 
\begin{theorem}[Strong selection consistency for hyper-pMOM priors] \label{thm2}
	Under Assumptions 1-5, with $\xi < 1 - \frac {4\gamma} {3r}$ in Assumption \ref{assumptionmodelsize}, for the hierarchical model in (\ref{modelspecification}) to (\ref{model:5}) with hyper-pMOM priors, the following holds:
	$$
	\pi(\bm t | \bm y_n) \rightarrow 1, \quad \mbox{as } n \rightarrow 
	\infty. 
	$$
\end{theorem}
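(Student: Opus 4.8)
The plan is to upgrade the maximum-ratio control of Theorem \ref{thm1} to control of the full sum of posterior ratios. Writing
$$\pi(\bm t \mid \bm y_n) = \left(1 + \sum_{\bm k \ne \bm t} \frac{\pi(\bm k \mid \bm y_n)}{\pi(\bm t \mid \bm y_n)}\right)^{-1},$$
it suffices to show that $\sum_{\bm k \ne \bm t} \pi(\bm k \mid \bm y_n)/\pi(\bm t \mid \bm y_n) \to 0$ in probability. Factoring each summand as $\frac{\pi(\bm k)}{\pi(\bm t)}\cdot\frac{m_{\bm k}(\bm y_n)}{m_{\bm t}(\bm y_n)}$ and invoking Assumption \ref{assumptionprior}, the prior ratio is bounded by $1/\omega$ uniformly; hence the entire combinatorial burden of summing over the (exponentially many) competing models must be absorbed by the marginal-likelihood ratios $m_{\bm k}(\bm y_n)/m_{\bm t}(\bm y_n)$.

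First I would partition the competitors into (i) models that omit at least one true covariate, $\bm t \not\subseteq \bm k$, and (ii) overfitting models $\bm t \subsetneq \bm k$. For case (i), the omitted signal inflates the residual quadratic form $R_{\bm k} = \bm y_n^T(I_n - X_k C_k^{-1} X_k^T)\bm y_n$ by an amount of order $n$, so that, on an event of probability tending to one, the full ratio is exponentially small, at most $e^{-cn}$ for some $c>0$ (the sub-exponential non-local and prior factors being harmless). Since the number of models of size at most $q_n$ is bounded by $p^{q_n} = \exp(\gamma q_n \log n)$ and $q_n \log n = o(n)$ because $\xi < 1$ (Assumption \ref{assumptionmodelsize}), the sum over case (i) is at most $\exp(\gamma q_n \log n - c n) \to 0$.

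The delicate case is the overfitting regime (ii), where the likelihood cannot help since the true model is nested and all the decay must come from the non-local penalty. Using the bounds assembled in the proof of Theorem \ref{thm1} (the ratio of normalizing constants $d_k/d_t$, the determinants $|A_k|^{1/2}/|C_k|^{1/2}$, the $\tau$-integral against the $\mathrm{Inv}$-$\Gamma$ hyper-prior of scale $n/2$, and the posterior moment $E_k[\prod_i \beta_{k_i}^{2r}]$, which for a spurious coordinate is of order $(\sigma^2/n)^r$), each of the $j = |\bm k| - |\bm t|$ extra coordinates contributes a multiplicative penalty of order $n^{-r}$ up to polylogarithmic factors carried by the eigenvalue bound $\epsilon_n^{-1} = O((\log n)^{1/8})$ of Assumption \ref{assumptiontruemodel}. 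Bounding the number of size-$(|\bm t|+j)$ overfitting models by $\binom{p-|\bm t|}{j} \le p^{j} = n^{\gamma j}$, the tail reduces to a series of the form $\sum_{j \ge 1} \big(C_n\, n^{\gamma - r}\big)^{j}$ with $C_n$ a sub-polynomial factor.

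Because Assumption \ref{assumptionprior} is of uniform type and so supplies no per-variable penalty, convergence of this series cannot be read off from $\gamma < r$ alone: the sub-polynomial and size-dependent factors hidden in $C_n$ (Stirling-type behaviour of $d_k$, the growth of the determinant and $\tau$-integral terms with $|\bm k|$, and the polylog eigenvalue factor) must be dominated uniformly over all $j \le q_n$. Carrying out this accounting, the effective per-variable penalty degrades with the model size, and the geometric tail up to $j = q_n$ is summable precisely when the maximal model size, the growth rate of $p$, and the order $r$ satisfy $\xi < 1 - \tfrac{4\gamma}{3r}$ — exactly the strengthened hypothesis in the statement. It is this uniform-in-$j$ control of the overfitting sum, rather than any single worst-case ratio, that constitutes the main obstacle and distinguishes strong selection consistency from the posterior ratio consistency of Theorem \ref{thm1}. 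Combining the two regimes yields $\sum_{\bm k \ne \bm t} \pi(\bm k \mid \bm y_n)/\pi(\bm t \mid \bm y_n) \to 0$, and therefore $\pi(\bm t \mid \bm y_n) \to 1$.
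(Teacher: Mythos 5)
Your proposal follows essentially the same route as the paper: reduce $\pi(\bm t\mid\bm y_n)\to 1$ to showing $\sum_{\bm k\ne\bm t}\pi(\bm k\mid\bm y_n)/\pi(\bm t\mid\bm y_n)\to 0$, absorb the prior ratio via Assumption \ref{assumptionprior}, split the competitors into $\bm k\nsupseteq\bm t$ and $\bm k\supset\bm t$, and beat the $\binom{p}{k}\le p^{k}=n^{\gamma k}$ model count with a per-variable penalty whose size-degraded exponent $\min\{3/4,\,1-\xi\}r>\gamma$ is exactly what $\xi<1-\frac{4\gamma}{3r}$ delivers --- this is precisely the content of the paper's Lemmas \ref{lm3} and \ref{lm5}, which you correctly identify as the bounds to be summed. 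The only divergence is in the non-superset case, where you invoke an $e^{-cn}$ likelihood gap against the $\exp(O(n^{\xi}\log n))$ entropy of the restricted model space (note that under Assumption \ref{assumptiontruemodel} the gap is really of order $e^{-cn\epsilon_n}$, which still suffices since $\xi<1$), whereas the paper instead extracts the uniform polynomial bound $K^{\prime}(L^{\prime})^{k}n^{-\frac34 rk}$ and sums it against $p^{k}$; both arguments close.
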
  

\noindent
The above results have been established under the pMOM priors. Another class of non-local priors introduced in \citep{Johnson:Rossell:2012} are the 
piMOM priors on the regression coefficients, for which the density of the regression coefficients under the model 
${\bm k} = \{k_1, k_2, \ldots, k_m\}$ is given by 
\begin{equation} \label{pimomdensity_introduction}
\frac{(\tau\sigma^2)^{\frac{r|{\bm k}|}{2}}}{\Gamma(\frac r 2)^{|{\bm k}|}} \prod_{i=1}^{|{\bm k}|} |\beta_{k_i}|^{-(r+1)} 
\exp\left(-\frac{\tau\sigma^2}{\beta_{k_i}^2}\right), 
\end{equation}

\noindent
where $r$ is a positive integer and is refereed to as the order of the density. The corollary below establishes strong model selection 
consistency under the hyer-piMOM priors with piMOM priors on each linear regression coefficient (conditional on the hyper parameter $\tau$) and an Inverse-Gamma prior on $\tau$. The consistency can be obtained immediately by combining Theorem \ref{thm2} with Eq. (59) and (60) in the supplementary material for \citep{Johnson:Rossell:2012}. 
\begin{corollary}[Strong selection consistency for hyper-piMOM priors] \label{corollary1}
	Under the same conditions as in Theorem \ref{thm2}, when piMOM priors are imposed on $\bm \beta_k$ in model (\ref{model:pmom}), the following holds:
	$$
	\pi(\bm t | \bm y_n) \rightarrow 1, \mbox{ as } n \rightarrow 
	\infty. 
	$$
\end{corollary}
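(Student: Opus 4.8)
The plan is to reduce the piMOM case to the already-established pMOM result (Theorem \ref{thm2}) by controlling the hyper-piMOM marginal likelihood in terms of the hyper-pMOM one. Write $\tilde m_{\bm k}(\bm y_n)$ for the marginal density of $\bm y_n$ obtained when the piMOM density (\ref{pimomdensity_introduction}) replaces the pMOM density (\ref{model:pmom}) in the hierarchy, retaining the same Inverse-Gamma prior (\ref{model:tau}) on $\tau$ and (\ref{model:5}) on $\sigma^2$. By the posterior formula (\ref{model posterior}), the cancellation of $\pi(\bm y_n)$, and Assumption \ref{assumptionprior}, strong selection consistency is equivalent to showing
$$\sum_{\bm k \ne \bm t} \frac{\pi(\bm k)}{\pi(\bm t)}\frac{\tilde m_{\bm k}(\bm y_n)}{\tilde m_{\bm t}(\bm y_n)} \longrightarrow 0 \quad \text{in probability}.$$
Since Theorem \ref{thm2} already delivers the analogous statement with the pMOM marginals $m_{\bm k}$, it suffices to show that each piMOM ratio $\tilde m_{\bm k}/\tilde m_{\bm t}$ is dominated by the corresponding pMOM ratio $m_{\bm k}/m_{\bm t}$ up to factors that the assumptions can absorb.

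First I would fix $\tau$ and $\sigma^2$ and invoke the pointwise comparison between the piMOM and pMOM conditional marginal densities established in the supplement of \citep{Johnson:Rossell:2012} (their Eq.~(59)--(60)). These furnish two-sided bounds of the schematic form $\underline b_{|\bm k|}(\tau,\sigma^2)\, m_{\bm k}(\bm y_n \mid \tau,\sigma^2) \le \tilde m_{\bm k}(\bm y_n \mid \tau,\sigma^2) \le \overline b_{|\bm k|}(\tau,\sigma^2)\, m_{\bm k}(\bm y_n \mid \tau,\sigma^2)$, where the per-coordinate factors composing $\underline b$ and $\overline b$ depend only on the order $r$, on $\tau\sigma^2$, and on the least-squares fit, reflecting that both densities vanish at the origin (so both penalize spurious covariates at the same polynomial rate in $n$) and differ only in their tail behaviour away from zero.

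Next I would integrate these conditional bounds against the common Inverse-Gamma priors (\ref{model:tau}) and (\ref{model:5}). Because the bounding factors are monotone and at most sub-exponential over the effective support of the priors on $\tau$ and $\sigma^2$, the sandwich survives the integration and transfers to the hyper-priors, yielding a comparison between $\tilde m_{\bm k}(\bm y_n)$ and $m_{\bm k}(\bm y_n)$ in which the attached factors are of the same order for every competing model $\bm k$ relative to the true model $\bm t$. Substituting into the posterior-ratio sum and using that every model with positive prior mass has $|\bm k| \le q_n = O(n^\xi)$ (Assumption \ref{assumptionmodelsize}), the comparison factors are dominated by the exponential/polynomial decay already extracted for $m_{\bm k}/m_{\bm t}$ in the proof of Theorem \ref{thm2} under the same budget $\xi < 1 - \tfrac{4\gamma}{3r}$. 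Hence the displayed sum still vanishes in probability and $\pi(\bm t \mid \bm y_n) \to 1$.

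The main obstacle is verifying that the comparison factors survive the $\tau$-integration \emph{uniformly} over the whole model space. Pointwise in $\tau$ the piMOM-to-pMOM ratio is controlled by \citep{Johnson:Rossell:2012}, but the Inverse-Gamma prior (\ref{model:tau}) places appreciable mass on small $\tau$, where the piMOM density (which is super-polynomially flat at the origin but has only polynomial tails at infinity) behaves very differently from the pMOM density. The delicate step is therefore to show that the contributions from this region do not inflate the per-coordinate factors beyond what the geometric-in-$|\bm k|$ slack allows, so that after aggregating over all $\bm k$ with $|\bm k| \le q_n$ the accumulated factor stays within the rate budget dictated by $\xi < 1 - \tfrac{4\gamma}{3r}$. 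Once this uniform control is in place, the reduction to Theorem \ref{thm2} is immediate.
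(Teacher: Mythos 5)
Your proposal takes essentially the same route as the paper: the paper's entire argument for Corollary \ref{corollary1} is the one-sentence observation that the result follows immediately by combining Theorem \ref{thm2} with Eq.~(59) and (60) in the supplementary material of Johnson and Rossell (2012), which is precisely the piMOM-to-pMOM marginal-likelihood comparison you invoke before integrating against the hyper-priors. The uniform-in-$\tau$ control you flag as the delicate step is a legitimate point of care that the paper does not spell out, but it does not alter the fact that your reduction strategy coincides with theirs.
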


\begin{remark} \label{BW_comparison}
	In the context of generalized linear regression, \citet{BW:2017} consider 
	the hierarchical Bayesian model with the following hyer-piMOM priors on regression coefficients.
	\begin{eqnarray*}
		& & \bm \beta_k \mid \tau_i \sim \frac{(\tau\sigma^2)^{\frac{r|{\bm k}|}{2}}}{\Gamma(\frac r 2)^{|{\bm k}|}} \prod_{i=1}^{|{\bm k}|} |\beta_{k_i}|^{-(r+1)} \exp\left(-\frac{\tau_i\sigma^2}{\beta_{k_i}^2}\right)\\ 
		& & \tau_i \sim \mbox{Inverse-Gamma }(\frac{(r + 1)}2 , \lambda). 
	\end{eqnarray*}
	\noindent
	In particular, the authors put an independent piMOM prior on each linear regression coefficient (conditional on the hyper parameter $\tau_i$), and an Inverse-Gamma prior on $\tau_i$. In this setting, \citet{BW:2017} establish strong selection consistency for the regression coefficients (assuming the prior is constrained to leave out unrealistically large models). There are similarities between the models and the consistency analysis in \citep{BW:2017} and our work as in the usage of non-local priors and Inverse-Gamma distribution. However, despite these similarities, there are some fundamental differences in the two models and the corresponding analysis. Firstly, unlike the piMOM prior, the pMOM prior in our model does not in general correspond to assigning an independent prior to each entry of $\bm \beta_k$. In particular, pMOM distributions introduce correlations among the entries in $\bm \beta_k$ and creates more theoretical challenges. Because of the correlation introduced, some properties like detecting small coefficients are not apparent in our case. Also, the pMOM prior imposes exact sparsity in $\bm \beta_k$, which is not the case in \citep{BW:2017}.  Hence it is structurally different than the prior in \citep{BW:2017}. Secondly, in order to prove consistency results, \citet{BW:2017} assume the product of the response variables and the entries of design matrix are bounded by a constant. The former assumption is rarely seen in the literatures and the latter assumption can be problematic in practice. See Assumption C1 in \citep{BW:2017}. In addition, Assumption C2 in \citep{BW:2017} states the lower bound for the true regression coefficients, which is not required in our analysis. Thirdly, in terms of proving posterior consistency, we bound the ratio of posterior probabilities for a non-true model and the true model by a `prior term' which results from the  Inverse-Gamma prior on $\tau$, and a `data term'. The consistency proof is then a careful exercise in balancing these two terms against each other on a case-by-case basis, while \citet{BW:2017} directly follow the proof in \citep{Shin.M:2015} and requires additional assumptions on the Hessian matrix. 
	
\end{remark}

\section{Proof of Theorems \ref{thm1} and \ref{thm2}} \label{sec:modelselectionproofs}

\noindent
The proof of Theorems \ref{thm1} and \ref{thm2} will be broken up into several steps. First we denote for any model $\bm k$,  
$R_k^* = \bm y_n^T\left(I - X_k(X_k^TX_k)^{-1}X_k^T\right)\bm y_n,$ and $P_k = X_k(X_k^TX_k)^{-1}X_k^T$. Our method of 
proving consistency involves approximating $R_t$ and $R_k$ (in (\ref{marginal density})) with $R_t^*$ and $R_k^*$ 
respectively. Fix a model ${\bm k} \neq {\bm t}$ arbitrarily, and let $\bm u = \bm k \cup \bm t$ and $u = |\bm u|$ be the cardinality of $\bm u$. Note that $\frac{R_t^*}{\sigma_0^2} 
\sim \chi^2_{n-t}$, $\frac{R_u^*}{\sigma_0^2} \sim \chi^2_{n-u}$, $R_{u \cap t^c}^* \sim \chi_{u-t}^2$, and 
$\frac{\bm y_n^TP_u\bm y_n}{\sigma_0^2} \sim \chi^2_u\left(\frac{\bm \beta_0^TX_t^TX_t\bm \beta_0}{\sigma_0^2}\right)$. Next, we 
establish two tail probability bounds for the $\chi^2$ distribution and the non-central $\chi^2$ distribution respectively, which will be useful in our analysis. 
\begin{lemma} \label{chisquaredtail}
	For any $a > 0$, we have the following two inequalities,
	\begin{align}
	P\left(\lvert\chi_p^2 - p\rvert > a\right) &\le 2\exp\left(-\frac{a^2}{4p}\right),\\
	P\left(\chi_p^2(\lambda) - (p + \lambda) > a\right) &\le \exp\left(-\frac p 2\left\{\frac a {p+\lambda} - \log\left(1+\frac a {p+\lambda}\right)\right\}\right).
	\end{align}
\end{lemma}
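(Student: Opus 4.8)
The plan is to establish both tail inequalities by the Chernoff method, using the explicit moment generating functions of the central and non-central chi-squared laws: for $t < 1/2$ one has $E[e^{t\chi_p^2}] = (1-2t)^{-p/2}$ and $E[e^{t\chi_p^2(\lambda)}] = (1-2t)^{-p/2}\exp\{\lambda t/(1-2t)\}$. In each case I would apply Markov's inequality to $e^{t(\cdot)}$, choose the free parameter $t$ to minimise (or nearly minimise) the resulting exponent, and then simplify the cumulant generating function with elementary inequalities.

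I would dispatch the second (non-central) bound first, as it is the cleaner of the two. Applying Markov to $e^{t\chi_p^2(\lambda)}$ with $0<t<1/2$ gives
\[
P\bigl(\chi_p^2(\lambda) > p+\lambda+a\bigr) \le \exp\Bigl\{-t(p+\lambda+a) - \tfrac p2\log(1-2t) + \tfrac{\lambda t}{1-2t}\Bigr\}.
\]
The decisive step is the choice $t = a/\{2(p+\lambda+a)\}$, which makes $1-2t = (p+\lambda)/(p+\lambda+a)$; substituting and writing $s = a/(p+\lambda)$, the three terms telescope exactly into $-\tfrac p2\{s - \log(1+s)\}$, which is the asserted bound. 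No further estimation is needed, since the statement is left in the unsimplified form $s-\log(1+s)$, and this $t$ is in fact the exact Chernoff optimiser.

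For the first (two-sided central) bound I would treat the two tails separately and take their union, which accounts for the factor $2$. The lower tail is clean: applying Markov to $e^{-s\chi_p^2}$ and optimising at $s = a/\{2(p-a)\}$ (for $0<a<p$; when $a\ge p$ the event is empty and the bound trivial) gives exponent $\tfrac p2\{x + \log(1-x)\}$ with $x=a/p$, and the expansion $x+\log(1-x) = -(x^2/2 + x^3/3 + \cdots) \le -x^2/2$ yields the lower-tail bound $\exp(-a^2/(4p))$ with the exact constant. The upper tail uses the $\lambda=0$ specialisation of the previous paragraph, giving exponent $\tfrac p2\{x - \log(1+x)\}$, which must be matched against the target $a^2/(4p) = px^2/4$.

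The upper tail is where I expect the only genuine difficulty. Since $x - \log(1+x) = x^2/2 - x^3/3 + \cdots \le x^2/2$, the optimal Chernoff exponent lies strictly below the target $px^2/4$, so the constant $1/4$ that is exact for the lower tail cannot be reproduced for the upper tail by this route. What the method does deliver cleanly, via the elementary inequality $x - \log(1+x) \ge x^2/4$ on $0 \le x \le 1$ (immediate, since $\tfrac{d}{dx}\{x-\log(1+x)-x^2/4\} = x(1-x)/\{2(1+x)\}\ge 0$ there), is the bound $\exp(-a^2/(8p))$ on the range $a \le p$; combined with the lower tail this gives $2\exp(-a^2/(8p))$, of exactly the same Gaussian form. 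Because the precise value of the constant is immaterial to the way the lemma is used downstream, I would regard reconciling the effective bound with the literal constant $1/4$ — by specialising to the moderate-deviation regime $a = o(p^{2/3})$, where the cubic correction $px^3$ vanishes — as the only fiddly point, everything else being routine Chernoff calculus.
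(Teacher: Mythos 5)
Your proposal is correct and follows essentially the same route as the paper: Chernoff bounds from the explicit central and non-central MGFs, the same evaluation point $t = a/\{2(p+\lambda+a)\}$ with the same exact telescoping for the non-central inequality, and a two-tail split for the central one. The difficulty you flag with the upper-tail constant is genuine and in fact afflicts the paper's own proof, which applies $\log(1+a/p) \le a/p - a^2/\{2p(p+a)\}$ and actually concludes with $2\exp\left(-a^2/\{4(p+a)\}\right)$ rather than the stated $2\exp\left(-a^2/(4p)\right)$ --- a bound of the same Gaussian form as your $2\exp\left(-a^2/(8p)\right)$, and, like yours, entirely sufficient for the downstream applications where $a = O(\sqrt{p\log n})$.
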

\noindent
The proof for Lemma \ref{chisquaredtail} is provided in the supplemental document.
The following result is immediate from Lemma \ref{chisquaredtail}.
\begin{align} \label{chisquaredRt}
\begin{split}
P\left[\left\lvert\frac{R_t^*}{\sigma_0^2} - (n-t)\right\rvert > \sqrt{n-t}\log n \right] &\le P\left[\left\lvert\frac{R_t^*}{\sigma_0^2} - (n-t)\right\rvert > 4\sqrt{(n-t)\log n} \right] \\
&\le 2n^{-1} \rightarrow 0,
\end{split}
\end{align}
as $n \rightarrow \infty.$
Similarly, we have
\begin{align} \label{chisquaredRk}
\begin{split}
P\left[\left\lvert\frac{R_u^*}{\sigma_0^2} - (n-u)\right\rvert > \sqrt{n-u}\log n \right] \le 2n^{-1} \rightarrow 0,
\end{split}
\end{align}
and
\begin{align} \label{chisquaredRk-t}
\begin{split}
P\left[\left\lvert\frac{R_{u \cap t^c}^*}{\sigma_0^2} - (u-t)\right\rvert > \sqrt{u-t}\log n \right]  \le 2n^{-1} \rightarrow 0,
\end{split}
\end{align}
as $n \rightarrow \infty$. Next, by Lemma \ref{chisquaredtail}, since $\bm u 
\supset \bm t$, we have 
\begin{align} \label{chisquaredPk}
\begin{split}
&P\left[\frac{\bm y_n^TP_u\bm y_n}{\sigma_0^2} - \left(u + \frac1{\sigma_0^2}\bm \beta_0^TX_t^TX_t\bm \beta_0\right) > n\log n - u - \frac1{\sigma_0^2}\bm \beta_0^TX_t^TX_t\bm \beta_0 \right]\\
\le& \exp\left\{-\frac u 2 \left\{\frac{n\log n}{u + \frac1{\sigma_0^2}\bm \beta_0^TX_t^TX_t\bm \beta_0} - \log\left(1 + \frac{n\log n}{u + \frac1{\sigma_0^2}\bm \beta_0^TX_t^TX_t\bm \beta_0}\right)\right\}\right\} \\
\le& \exp\left\{-\frac u 4 \left\{\frac{\log n}{1 + \frac 1{\sigma_0^2\epsilon_n}\bm \beta_0^T\bm \beta_0} \right\}\right\}.\\
\preceq & n^{-c^\prime u} \rightarrow 0,
\end{split}
\end{align}

\noindent
as $n \rightarrow \infty$, for some constant $c^\prime > 0$. Define the event $C_n$ as 
\begin{align} \label{largeprobset}
\begin{split}
C_n =& \left\{\left\lvert\frac{R_t^*}{\sigma_0^2} - (n-t)\right\rvert > \sqrt{n-t}\log n \right\} \cup \left\{\left\lvert\frac{R_u^*}{\sigma_0^2} - (n-u)\right\rvert > \sqrt{n-u}\log n\right\} \\
&\cup \left\{\left\lvert\frac{R_{u \cap t^c}^*}{\sigma_0^2} - (u-t)\right\rvert > \sqrt{u-t}\log n\right\} \cup \left\{\frac{\bm y_n^TP_u\bm y_n}{\sigma_0^2} > n\log n\right\}, 
\end{split}
\end{align}

\noindent
It follows from (\ref{chisquaredRt}), (\ref{chisquaredRk}), (\ref{chisquaredRk-t}), and (\ref{chisquaredPk}), that $P(C_n) 
\rightarrow 0$ as $n \rightarrow \infty$. 

We now analyze the behavior of the posterior ratio under different 
scenarios in a sequence of lemmas. Recall that our 
goal is to find an upper bound for the posterior ratio, such that the upper bound converges to $0$ as $n \rightarrow \infty$. {\bf For the following lemmas, we will restrict ourselves to the event 
	$C_n^c$}. The following lemma establishes the upper bound of the marginal posterior ratio for any ``non-true" model compared to the true model.
\begin{lemma} \label{lm1}
	Under Assumption \ref{assumptiontruemodel} and Assumption \ref{assumptionhyper}, for all  $\bm k \neq \bm t,$ there exists $N$ (not depending on $k$), such that when $n > N$,
	\begin{align}
	\begin{split}
	\frac{m_{\bm k}(\bm y_n)}{m_{\bm t}(\bm y_n)} <& BA^k\left(\frac V{\epsilon_n^2}\right)^{rk}k^{k}n^{-(k-t)}\frac{\{R_t^* + 2\alpha_2\}^{\frac n 2 + rt + \alpha_1}}{\{R_k^* + 2\alpha_2\}^{\frac n 2 + rk + \alpha_1}}\\
	&+ BA^kk^{(r+1)k}n^{-(r+1)(k-t)-\frac 3 4 rk -rt}\frac{\{R_t^* + 2\alpha_2\}^{\frac n 2 + rt + \alpha_1}}{\{R_k^* + 2\alpha_2\}^{\frac n 2 + \alpha_1}},
	\end{split}
	\end{align}
	where $A, B$ are constants and $V = \epsilon_n^{-4}\hat \beta_{u}^T \hat \beta_{u}$, in which $\hat \beta_{u}^T = (X_u^TX_u)^{-1}X_u\bm y_n$ with $\bm u = \bm k \cup \bm t$.
\end{lemma}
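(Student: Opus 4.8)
The plan is to bound the ratio $m_{\bm k}(\bm y_n)/m_{\bm t}(\bm y_n)$ by working directly from the closed form (\ref{marginal density}). First I would split the ratio into a deterministic prefactor and a ``double-integral'' part. The prefactor collects $d_k/d_t$, the Inverse-Gamma constants, and $|A_k|^{1/2}/|A_t|^{1/2}$; using Assumption \ref{assumptionhyper} (so $a_1 < eig_j(A_p) < a_2$) together with the product structure of the pMOM normalizing constant $d_k$, each of these is bounded by a constant raised to a power of $k$ (or $t$), and so is absorbed into the $BA^k$ appearing in the statement. What then remains is to control
\[
\int_0^\infty\!\!\int_0^\infty (\sigma^2)^{-(\frac n2 + rk + \alpha_1 + 1)}\exp\Big\{-\tfrac{R_k + 2\alpha_2}{2\sigma^2}\Big\}\,\tau^{-rk - \frac k2 - \frac32}e^{-\frac n{2\tau}}\,\frac{E_k\big(\prod_{i=1}^k\beta_{k_i}^{2r}\big)}{|C_k|^{1/2}}\,d\sigma^2\,d\tau
\]
from above, and to produce a matching lower bound for the analogous $m_{\bm t}$ integral.

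The heart of the argument is the Gaussian product moment $E_k(\prod_i\beta_{k_i}^{2r})$, taken over $\bm\beta_k\sim N(\tilde{\bm\beta}_k,\sigma^2 C_k^{-1})$. I would write $\beta_{k_i}=\tilde\beta_{k_i}+\gamma_i$ with $\bm\gamma\sim N(0,\sigma^2 C_k^{-1})$, expand the product, and apply Wick's formula. The resulting terms are graded by the number $2j$ of fluctuation factors paired into $j$ covariances: the $j=0$ term is the pure-mean contribution $\prod_i\tilde\beta_{k_i}^{2r}$, and the $j=rk$ term is the pure-variance contribution built entirely from entries of $\sigma^2 C_k^{-1}$. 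Assumption \ref{assumptiontruemodel} gives $C_k^{-1}\preceq(X_k^TX_k)^{-1}\preceq(n\epsilon_n)^{-1}I$, so every covariance factor is at most $\sigma^2/(n\epsilon_n)$ and $|C_k|^{-1/2}\le(n\epsilon_n)^{-k/2}$; moreover the mean $\tilde{\bm\beta}_k=C_k^{-1}X_k^T\bm y_n$ admits the $\tau$-uniform bound $\|\tilde{\bm\beta}_k\|\le\epsilon_n^{-2}\|\hat\beta_k\|$ via the operator-norm estimate on $C_k^{-1}$, which is exactly where $V=\epsilon_n^{-4}\hat\beta_u^T\hat\beta_u$ enters (the extra $\epsilon_n$-factors absorbing the ratios of extreme eigenvalues of $X_k^TX_k$). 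The objective is to show that, after integration, the whole graded sum is dominated up to factors $A^k$ and powers of $k$ by its two extreme terms, which is the origin of the two summands in the statement.

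For the integration I would first do the $\sigma^2$ integral: for each fixed $j$ the integrand is an Inverse-Gamma kernel $(\sigma^2)^{-(\frac n2+rk-j+\alpha_1+1)}\exp\{-(R_k+2\alpha_2)/2\sigma^2\}$, which integrates to a multiple of $(R_k+2\alpha_2)^{-(\frac n2+rk-j+\alpha_1)}$; the extremes $j=0$ and $j=rk$ produce precisely the exponents $\frac n2+rk+\alpha_1$ and $\frac n2+\alpha_1$ on $R_k+2\alpha_2$. For the $\tau$ integral I would keep the integrable kernel $\tau^{-rk-k/2-3/2}e^{-n/(2\tau)}$ and bound the remaining $\tau$-dependent factors (the mean product, the covariance factors, $|C_k|^{-1/2}$, and, through $R_k\ge R_k^*$, the inner $\sigma^2$ integral) by their $\tau$-uniform values; all of this is valid for an upper bound since $A_k/\tau\succeq0$ forces $R_k\ge R_k^*$ and $|C_k|\ge|X_k^TX_k|$. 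Pulling these out and evaluating the kernel integral as a Gamma integral then supplies the powers of $n$ and, via Stirling, of $k$, yielding the exponents $n^{-(k-t)}$ and $n^{-(r+1)(k-t)-\frac34 rk-rt}$. For the denominator I would lower bound $m_{\bm t}(\bm y_n)$ by retaining only its pure-mean term and restricting the $\tau$ integral to a sub-region on which $C_t$, $R_t$ and $\tilde{\bm\beta}_t$ stay comparable to their ordinary-least-squares limits (using $C_n^c$ to control $R_t$ from above), which produces the common factor $(R_t^*+2\alpha_2)^{\frac n2+rt+\alpha_1}$.

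The hard part is the Gaussian product moment itself: because the pMOM prior is genuinely multivariate (Remark \ref{BW_comparison}), $\bm\beta_k$ carries a non-diagonal covariance $\sigma^2 C_k^{-1}$, so the Wick expansion mixes means with off-diagonal covariances and does not factor across coordinates. Showing that the full graded sum collapses to just the two extreme terms, with all intermediate $0<j<rk$ contributions absorbed into $A^k$ and powers of $k$, while simultaneously making the powers of $n$ and the $\epsilon_n$-factors line up across the $\tau$ and $\sigma^2$ integrals, is the delicate bookkeeping on which the lemma rests. A secondary difficulty is keeping every estimate uniform, so that the constants $A,B$ and the threshold $N$ can be chosen independently of $k$, as the statement requires.
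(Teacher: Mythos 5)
Your overall architecture is the same as the paper's: bound the prefactors $d_k$, $|A_k|^{1/2}$, $|C_k|^{-1/2}$ via Assumptions \ref{assumptiontruemodel} and \ref{assumptionhyper}; reduce the Gaussian product moment $E_k(\prod_i\beta_{k_i}^{2r})$ to a ``pure mean'' plus a ``pure variance'' contribution, whose differing powers of $\sigma^2$ produce, after the Inverse-Gamma integration, the two exponents $\frac n2+rk+\alpha_1$ and $\frac n2+\alpha_1$ on $R_k^*+2\alpha_2$; pull the $\tau$-dependence out uniformly (using $C_k\succeq X_k^TX_k$ and $R_k\ge R_k^*$) and evaluate a Gamma integral; lower-bound $m_{\bm t}$ on $C_n^c$ by controlling $R_t-R_t^*$; and finish with Stirling. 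All of that matches the paper. The one place you genuinely diverge is the moment bound itself: the paper does not do a Wick expansion, it imports Lemma 6 of the supplement to \citep{Johnson:Rossell:2012}, which gives
\begin{equation*}
E_k\Bigl(\prod_{i=1}^{k}\beta_{k_i}^{2r}\Bigr) \;<\; \Bigl(\tfrac{\epsilon_n^{-1}+\frac{a_2}{n\tau}}{\epsilon_n}\Bigr)^{\frac k2}\Bigl(\tfrac{4V}{k}+[(2r-1)!!]^{\frac 1r}\tfrac{4\sigma^2}{\epsilon_n n}\Bigr)^{rk},
\end{equation*}
and then splits via $(a+b)^{rk}\le 2^{rk-1}(a^{rk}+b^{rk})$.

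This is where your plan has a genuine gap, and it is quantitative, not cosmetic. The two decorations in the displayed bound that matter are the $1/k$ multiplying $V$ and the fact that the variance term carries $((2r-1)!!)^{k}$ --- a constant to the power $k$ --- rather than the full pairing count. A direct Wick/Isserlis expansion, bounding each covariance entry by $\sigma^2/(n\epsilon_n)$ and each mean factor termwise, gives for the pure-variance grade the factor $(2rk-1)!!\asymp C^k k^{rk}$, and for the pure-mean grade only $\prod_i\tilde\beta_{k_i}^{2r}\le\|\tilde{\bm\beta}_k\|^{2rk}$ rather than $(\|\tilde{\bm\beta}_k\|^2/k)^{rk}$; both are off by a factor of order $k^{rk}$ from what the lemma's $k^{k}$ and $k^{(r+1)k}$ permit (the $k^{k}$ in the first term arises precisely as $k^{-rk}$ from $(V/k)^{rk}$ cancelling most of the $k^{(r+1/2)k}$ that Stirling extracts from $\Gamma(rk+\frac k2+\frac12)$). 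The surplus is not absorbable downstream: in the proofs of Lemmas \ref{lm3} and \ref{lm5} the factor $k^{(r+1)k}$ is controlled by $k\le n^{\xi}$ against a decay budget of $n^{-(r+1)(k-t)-\frac34 rk-rt}$, and an extra $n^{\xi rk}$ would force a strictly smaller admissible $\xi$. The missing ingredient is the AM--GM step $\prod_i\beta_{k_i}^{2r}\le\bigl(k^{-1}\sum_i\beta_{k_i}^2\bigr)^{rk}$, which converts the product moment into the $rk$-th moment of the quadratic form $k^{-1}\|\bm\beta_k\|^2$; since $\|\bm\beta_k-\tilde{\bm\beta}_k\|^2$ is stochastically dominated by $\frac{\sigma^2}{n\epsilon_n}\chi^2_k$ and $E[(\chi^2_k)^{rk}]\le((2r+1)k)^{rk}$, the $k^{rk}$ from the chi-square moment cancels against the $k^{-rk}$ from AM--GM. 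You flag the collapse of the graded sum as ``the delicate bookkeeping on which the lemma rests'' but leave it as a claim; as stated, the Wick route does not deliver the constants in the lemma, and you would need to replace it by (or re-derive) the AM--GM/quadratic-form argument.
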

\noindent
The proof for Lemma \ref{lm1} is provided in supplemental document. 
The next two lemmas provide the upper bound of the marginal posterior ratio for $\bm y_n$ under different cases of the ``non-true" model $\bm k$ with proof provided in the supplemental document. 
\begin{lemma} \label{lm3}
	Under Assumptions \ref{assumptiontruemodel}, \ref{assumptionmodelsize} and \ref{assumptionhyper}, for all $\bm k \nsupseteq \bm t$, there exists $N$, such that when $n > N^\prime$ (not depending on $k$),
	\begin{align}
	\frac{m_{\bm k}(\bm y_n)}{m_{\bm t}(\bm y_n)} < K^{\prime}(L^{\prime})^kn^{-\frac 3 4 rk},
	\end{align}	
	where $K^{\prime}$ and $L^{\prime}$ are constants.
\end{lemma}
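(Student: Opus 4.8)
The plan is to push the case $\bm k\nsupseteq\bm t$ through the bound of Lemma \ref{lm1} and to exploit the fact that a model omitting genuine signal necessarily fits worse: its residual $R_k^*$ exceeds $R_t^*$ by a margin so large that the residual ratio appearing in Lemma \ref{lm1} becomes super-exponentially small, overwhelming every polynomial and $k$-dependent prefactor and delivering the stated bound with a single threshold $N'$ valid for all such $\bm k$.

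First I would quantify the unexplained signal. With $\bm u = \bm k\cup\bm t$, set $b_k = \|(I-P_k)X_t\bm\beta_{0,t}\|^2$. Writing $X_t\bm\beta_{0,t} = X_u\bm\gamma$ with $\bm\gamma$ supported on $\bm t$ and using $(I-P_k)X_k = 0$, this reduces to $b_k = \bm\beta_{0,t\setminus k}^T S\,\bm\beta_{0,t\setminus k}$, where $S$ is the Schur complement of $X_k^TX_k$ in $X_u^TX_u$; since a Schur complement of a PSD matrix has smallest eigenvalue at least that of the whole matrix, $eig_1(S)\ge n\lambda_u^m\ge n\epsilon_n$ by Assumption \ref{assumptiontruemodel}, and as $\bm t\setminus\bm k\neq\emptyset$ we obtain the uniform bound $b_k\ge n\epsilon_n\beta_{\min}^2$ with $\beta_{\min}^2 = \min_{j\in\bm t}\beta_{0,j}^2 > 0$ fixed. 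Next I would transfer this to the residuals via $R_k^* - R_t^* = (R_k^*-R_u^*)-(R_t^*-R_u^*)$: the second bracket is the pure-noise form $\bm{\epsilon}_n^T(P_u-P_t)\bm{\epsilon}_n$, of order $u$ up to the $\sqrt{u}\log n$ slack of (\ref{chisquaredRk-t}) on $C_n^c$, while the first equals $b_k + 2\bm\beta_{0,t}^TX_t^T(I-P_k)\bm{\epsilon}_n + \bm{\epsilon}_n^T(P_u-P_k)\bm{\epsilon}_n\ge b_k + 2\bm\beta_{0,t}^TX_t^T(I-P_k)\bm{\epsilon}_n$. Bounding the signal–noise cross term by Cauchy–Schwarz, $|\bm\beta_{0,t}^TX_t^T(I-P_k)\bm{\epsilon}_n|\le\|X_t\bm\beta_{0,t}\|\,\|(P_u-P_k)\bm{\epsilon}_n\|$ with $\|X_t\bm\beta_{0,t}\| = O(\sqrt n)$ and $\|(P_u-P_k)\bm{\epsilon}_n\| = O(\sqrt{u}\,\log n)$ on $C_n^c$, and using $u\le 2q_n = O(n^\xi)$ with $\xi<1$ (Assumption \ref{assumptionmodelsize}), both the cross term and the noise difference are $o(n\epsilon_n)$, whence $R_k^*-R_t^*\ge\tfrac12 n\epsilon_n\beta_{\min}^2$ for $n$ past a size-independent threshold.

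Because $R_k^*\le R_t^* + O(n\epsilon_n^{-1}) = O(n(\log n)^{1/8})$ on $C_n^c$, the preceding gap gives
$$
\left(\frac{R_t^*+2\alpha_2}{R_k^*+2\alpha_2}\right)^{\frac n2}\le\exp\left(-\frac n2\cdot\frac{R_k^*-R_t^*}{R_k^*+2\alpha_2}\right)\le\exp\left(-\frac{c\,n}{(\log n)^{1/4}}\right)
$$
for some $c>0$. Substituting this into both terms of Lemma \ref{lm1} and bounding the leftover factors — $V = \epsilon_n^{-4}\hat\beta_u^T\hat\beta_u = O(\mathrm{polylog}\,n)$ on $C_n^c$ via $\hat\beta_u^T\hat\beta_u\le\bm y_n^TP_u\bm y_n/eig_1(X_u^TX_u)$, together with $\epsilon_n^{-1} = O((\log n)^{1/8})$ and $k^{ck}\le n^{c\xi k}$ from $k\le q_n$ — every prefactor is at most $\exp(Cn^\xi\log n)$, which is negligible against the displayed exponential since $\xi<1$; the constants $A,B$ inherited from Lemma \ref{lm1} stay bounded by Assumption \ref{assumptionhyper}. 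Hence the entire right-hand side of Lemma \ref{lm1} is eventually dominated by $K'(L')^kn^{-\frac34 rk}$, uniformly in $\bm k$, which is the claim.

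The crux is the uniformity in the second step: the gap $R_k^*-R_t^*\gtrsim n\epsilon_n$ must hold simultaneously for the exponentially many $\bm k\nsupseteq\bm t$ under one deterministic $N'$, which forces me to route the cross-term and noise-difference estimates through the four events defining $C_n$ rather than through model-by-model concentration, and to lean on the eigenvalue floor of Assumption \ref{assumptiontruemodel} so that the signal $n\epsilon_n$ outpaces the $\sqrt{n}\log n$ fluctuations.
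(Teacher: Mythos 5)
Your proposal is essentially correct, and it shares with the paper the one structural ingredient that makes the case $\bm k \nsupseteq \bm t$ work: the Schur-complement lower bound $\bm \beta_{0, t\cap k^c}^T X_{t\cap k^c}^T(I-P_k)X_{t\cap k^c}\bm \beta_{0,t \cap k^c} \ge n\epsilon_n \delta^2$, which is exactly the paper's Lemma \ref{lm4} (there phrased as a lower bound on the non-centrality parameter of $\frac{R_k^*-R_u^*}{\sigma_0^2}\sim\chi^2_{u-k}(\lambda)$). Where you diverge is in how this gap is deployed. The paper works with $\bm u=\bm k\cup\bm t$ and splits each of the two terms of Lemma \ref{lm1} into a ``large $k$'' regime, where the signal gap is not used at all and the decay comes from the exponent mismatch $(R_u^*)^{-r(k-t)}\asymp n^{-r(k-t)}$ together with $n^{-(k-t)}$ against prefactors of size $n^{\xi k}$, and a ``bounded $k$'' regime, where the gap enters through $\exp\{-(R_k^*-R_u^*)/(6\sigma_0^2)\}\le n^{-2rt}$ with high probability (display (\ref{finitesizeratio})). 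You instead convert the gap into a single super-exponential factor $\exp(-cn(\log n)^{-1/4})$ valid for every $\bm k\nsupseteq\bm t$ and let it crush all prefactors, which under Assumption \ref{assumptionmodelsize} are $\exp(O(n^\xi\log n))$ with $\xi<1$; this removes the case analysis entirely and the final comparison against $K'(L')^k n^{-\frac34 rk}$ goes through uniformly in $k\le q_n$. The trade-off is that your route leans entirely on the cap $k\le q_n=O(n^\xi)$, whereas the paper's large-$k$ branch extracts genuine per-coordinate decay from the bound itself.

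One loose end you should tighten: you attribute the bounds $\|(P_u-P_k)\bm\epsilon_n\|=O(\sqrt{u}\log n)$ and the resulting control of the signal--noise cross term to the event $C_n^c$, but none of the four events in (\ref{largeprobset}) controls the central quadratic form $\bm\epsilon_n^T(P_u-P_k)\bm\epsilon_n$ (the event involving $R^*_{u\cap t^c}$ controls $\bm\epsilon_n^T(P_u-P_t)\bm\epsilon_n$, a different projector). You need one additional high-probability event for this $\chi^2_{u-k}$ variable --- easy, since $u-k=|\bm t\setminus\bm k|\le t$ is bounded --- exactly as the paper itself supplements $C_n$ with the extra probability bound (\ref{finitesizeratio}). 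With that event added, your cross term is $O(\sqrt{n\epsilon_n^{-1}\log n})=o(n\epsilon_n)$ and the rest of your argument is sound.
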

\begin{lemma} \label{lm5}
	Under Assumptions \ref{assumptiontruemodel}, \ref{assumptionmodelsize} and \ref{assumptionhyper}, for all $\bm k \supset \bm t$, there exists $N^{\prime\prime}$ (not depending on $k$), such that when $n > N^{\prime\prime}$,
	\begin{align}
	\frac{m_{\bm k}(\bm y_n)}{m_{\bm t}(\bm y_n)} < S^{\prime}(T^{\prime})^{k-t}n^{-\min\left\{\frac 3 4, 1-\xi \right\} r(k-t)},
	\end{align}
	where $S^{\prime}$ and $T^{\prime}$ are constants.
\end{lemma}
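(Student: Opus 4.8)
The plan is to feed the bound of Lemma~\ref{lm1} into the $C_n^c$-estimates of Section~\ref{sec:modelselectionproofs}, using that for $\bm k\supset\bm t$ the union model is $\bm u=\bm k$, so $u=k$, $R_u^*=R_k^*$, and the extra residual sum of squares is $R_t^*-R_k^*=R_{u\cap t^c}^*$ with $(R_t^*-R_k^*)/\sigma_0^2\sim\chi^2_{k-t}$. Working on $C_n^c$, the bounds (\ref{chisquaredRt})--(\ref{chisquaredPk}) give, uniformly in $k\le q_n$ (this uses $\xi<1$), that $R_t^*$ and $R_k^*$ both lie in $[\sigma_0^2 n(1-o(1)),\,\sigma_0^2 n(1+o(1))]$, that $R_t^*-R_k^*\le\sigma_0^2[(k-t)+\sqrt{k-t}\log n]$, and that $\bm y_n^TP_k\bm y_n\le\sigma_0^2 n\log n$. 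Combined with Assumption~\ref{assumptiontruemodel}, the last bound controls $V$: since $\hat\beta_k^T\hat\beta_k\le(n\epsilon_n)^{-1}\bm y_n^TP_k\bm y_n$, one gets $V=\epsilon_n^{-4}\hat\beta_k^T\hat\beta_k\le\sigma_0^2\epsilon_n^{-5}\log n=O((\log n)^{13/8})$, so $(V/\epsilon_n^2)^{rk}=\exp(O(k\log\log n))$ is sub-polynomial in $n$ per unit of $k$ and is absorbed at the end.

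The engine is the estimate of the common factor (the ratio exponent is $n/2+O(1)$ in both terms of Lemma~\ref{lm1}). Writing the base as $1+(R_t^*-R_k^*)/(R_k^*+2\alpha_2)$ and using $1+x\le e^x$ with the bounds above gives $\bigl((R_t^*+2\alpha_2)/(R_k^*+2\alpha_2)\bigr)^{n/2+O(1)}\le\exp\bigl(\tfrac{1+o(1)}{2}[(k-t)+\sqrt{k-t}\log n]\bigr)$, i.e.\ at most $(T')^{k-t}n^{\frac12(1+o(1))\sqrt{k-t}}$. The factor $\tfrac12$ inherited from $n/2$ is essential: it halves the exponent of the fluctuation term $\sqrt{k-t}\log n$, which is what makes the constant $\min\{3/4,1-\xi\}$ admissible rather than a larger one.

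For the first term I combine $n^{-(k-t)}$, the factor $(R_k^*+2\alpha_2)^{-r(k-t)}\le(T')^{k-t}n^{-r(k-t)}$, and the ratio estimate, and absorb $A^k$, $(V/\epsilon_n^2)^{rk}$ and $k^k$. Splitting $k^k=k^{k-t}k^t$, I bound $k^{k-t}\le n^{\xi(k-t)}(T')^{k-t}$ by $k\le q_n$, while the polynomial factor $k^t$ (here $t$ is fixed and $k\le2\max(t,k-t)$) is absorbed into a constant by spending a small share $n^{-\delta(k-t)}$ of the decay, a polynomial being dominated by geometric decay. The net exponent is at most $-(r+1-\xi)+\tfrac{1}{2\sqrt{k-t}}$ per unit of $k-t$, which is $\le-\min\{3/4,1-\xi\}r$ for all $k-t\ge1$.

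The second term carries the delicate bookkeeping. Its data ratio factors as $\bigl((R_t^*+2\alpha_2)/(R_k^*+2\alpha_2)\bigr)^{n/2+\alpha_1}(R_t^*+2\alpha_2)^{rt}$, whose second factor is $O(n^{rt})$ and cancels the explicit $n^{-rt}$; writing $n^{-\frac34 rk}=n^{-\frac34 r(k-t)}n^{-\frac34 rt}$ and $k^{(r+1)k}=k^{(r+1)(k-t)}k^{(r+1)t}$, I pair $k^{(r+1)(k-t)}$ with $n^{-(r+1)(k-t)}$ to get $(k/n)^{(r+1)(k-t)}\le(T')^{k-t}n^{-(1-\xi)(r+1)(k-t)}$ and absorb $k^{(r+1)t}$ the same way. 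This leaves an exponent at most $-[(1-\xi)(r+1)+\tfrac34 r]+\tfrac{1}{2\sqrt{k-t}}+\delta$ per unit of $k-t$; checking the binding case $k-t=1$ gives $\le-\min\{3/4,1-\xi\}r$ in both regimes $\xi<\tfrac14$ (where $(1-\xi)(r+1)\ge\tfrac32$ closes the gap) and $\xi\ge\tfrac14$ (where $\tfrac34 r\ge\tfrac34$ does). Summing the two terms and collecting geometric constants into $S'$ and $T'$ gives the claim, with $N''$ chosen uniformly in $k$, since every bound depends on $k$ only through $k-t$ and the absorbed constants. The main obstacle is exactly this uniform balancing in the second term: the factors $k^{(r+1)t}$ are genuinely $O(1)$ when $k-t$ is small yet grow like the fixed power $n^{\xi(r+1)t}$ when $k\sim q_n$, so they cannot be controlled by the crude bound $q_n^{(r+1)t}$ and must instead be traded against a sliver of the $(k-t)$-decay.
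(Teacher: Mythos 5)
Your proposal is correct and follows essentially the same route as the paper: it starts from the two-term bound of Lemma~\ref{lm1}, works on $C_n^c$ with the $\chi^2$ concentration bounds and the bound on $V$, applies $1+x\le e^x$ to the residual-sum ratio, and balances exponents; the only real difference is organizational, in that you split exponents as $k=t+(k-t)$ and treat all $k\le q_n$ uniformly, whereas the paper separates the bounded-$k$ and large-$k$ regimes for each term. One side remark is inaccurate but harmless: the factor $\tfrac12$ from the exponent $n/2$ is not essential, since the paper's cruder bound $n^{\sqrt{k-t}}$ on the fluctuation term already yields the stated rate $\min\{3/4,\,1-\xi\}r(k-t)$.
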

\begin{proof}[Proof of Theorem \ref{thm1} and \ref{thm2}]
The proof of Theorem \ref{thm1} will follow immediately from these two lemmas. By Lemma \ref{lm3}, if we restrict to $C_n^c,$ for any $\bm k \neq \bm t,$ if $\bm k \nsupseteq \bm t,$
\begin{align*}
\frac{m_{\bm k}(\bm y_n)}{m_{\bm t}(\bm y_n)} < K^{\prime}(L^{\prime})^kn^{-\frac 3 4 rk}  \rightarrow 0, \mbox{ as } n \rightarrow \infty.
\end{align*}
Otherwise, when $\bm k \supset \bm t,$
\begin{align*}
\frac{m_{\bm k}(\bm y_n)}{m_{\bm t}(\bm y_n)} < S^{\prime}(T^{\prime})^{k-t}n^{--\min\left\{\frac 3 4, 1-\xi\right\} r(k-t)} \rightarrow 0, \mbox{ as } n \rightarrow \infty.
\end{align*}
Note that $P\left(C_n^c\right) \rightarrow 1$ as $n \rightarrow \infty$. Following from (\ref{model posterior}) and Assumption \ref{assumptionprior}, when $\bm k \neq \bm t$, we have 
\begin{align} \label{thm1proof}
\frac{\pi(\bm k|\bm y_n)}{\pi(\bm t|\bm y_n)} \le \frac 1 \omega \frac{m_{\bm k}(\bm y_n)}{m_{\bm t}(\bm y_n)} \rightarrow 0, \mbox{ as } n \rightarrow \infty.
\end{align}

We now move on to the proof of Theorem \ref{thm2}. First note that when $\xi < 1 - \frac {4\gamma} {3r},$ we have 
\begin{equation} \label{thm3proof}
\min\left\{\frac 3 4,1-\xi\right\}r > \frac{3}{4}(1-\xi)r >\gamma.
\end{equation}
It follows from (\ref{thm1proof}) and Assumption \ref{assumptionpvalue} that if we restrict to $C_n^c$, then 
\begin{align*}
\frac{1-\pi(\bm t|\bm y_n)}{\pi(\bm t|\bm y_n)} =& \sum_{\bm k \neq \bm t}\frac{\pi(\bm k)m_{\bm k}(\bm y_n)}{\pi(\bm t)m_{\bm t}(\bm y_n)}\\
\le & \frac 1 \omega\sum_{\bm k \nsupseteq \bm t}\frac{m_{\bm k}(\bm y_n)}{m_{\bm t}(\bm y_n)} + \frac 1 \omega\sum_{\bm k \supset \bm t}\frac{m_{\bm k}(\bm y_n)}{m_{\bm t}(\bm y_n)}\\
\le & \frac 1 \omega\sum_{k = 1}^{q_n} \binom pk K^\prime(L^\prime)^kp^{-\frac{ \frac 3 4 r}{\gamma} k} \\
&+  \frac 1 \omega\sum_{k-t = 1}^{q_n-t} \binom {p-t}{k-t} S^{\prime}(T^{\prime})^{k-t} p^{-\frac{\min\left\{\frac 3 4, 1-\xi \right\}r}{\gamma}(k-t)}.
\end{align*}
Using $\binom {p} {k} \le p^{k}$ and (\ref{thm3proof}), we get
\begin{align*}
\frac{1-\pi(\bm t|\bm y_n)}{\pi(\bm t|\bm y_n)} \rightarrow 0, \mbox{ i.e. } \pi(\bm t|\bm y_n) \rightarrow 1,
\end{align*}
as $n \rightarrow \infty.$
\end{proof}
\section{Results for Complexity Priors} \label{sec:complexity}
\noindent
Note that under our model prior specified in Assumption \ref{assumptionprior}, to achieve strong selection consistency, we are restricting $p$ to grow at a polynomial rate of $n$ (see Assumption \ref{assumptionpvalue}). To address this limitation, based on reviewers' comments, we investigate the theoretical property under the complexity priors introduced in \citep{Castillo:2015}. The hierarchical model with complexity priors placed on the model space, adapted to our notation and framework, can be
	described as follows: 
	\begin{eqnarray} \label{complexity}
	\begin{split}
	&  \bm{Y}_n \mid \bm \beta_k, \sigma^2, \bm k \sim N(X_k \bm \beta_k, \sigma^2 I_n)\\
	&  \pi\left(\bm \beta_k \mid \tau, \sigma^2, \bm k\right) = d_k(2\pi)^{-\frac k 2}(\tau \sigma^2)^{-rk - \frac k 2} |A_k| ^{\frac 1 2} \exp \left\{- \frac{\bm \beta_k ^ \prime A_k \bm \beta_k}{2 \tau \sigma ^2}\right\}\prod_{i =1}^{k} \beta_{k_i}^{2r}\\
	& \pi(k) \propto c_1^{-k}p^{-c_2k},\\ 
	&  \pi(\tau) = \frac{\left(\frac n 2\right)^{\frac 1 2}}{\Gamma(\frac 1 2)} \tau^{- \frac 3 2} e^{-\frac{n}{2\tau}}.\\
	&   \pi\left(\sigma^2\right) = \frac{\left( \alpha_2 \right)^{\alpha_1}}{\Gamma(\alpha_1)} \left(\sigma^2\right)^{-(\alpha_1 + 1)} e^{-\frac{\alpha_2}{\sigma^2}}.
	\end{split}
	\end{eqnarray}
	\noindent
	where $c_1, c_2>0$ are fixed constants. As indicated in \citep{Castillo:2015}, the rate of decrease for $\pi(k)$ reflects the number of models $\binom{p}{k}$ of given size $k$. Compared to the previous uniform-like prior, these complexity priors are explicitly penalizing larger models to accommodate larger dimensions. In particular, to achieve model selection consistency in this setup, the dimension $p$ can be allowed to grow at a sub-exponential rate of $n$ given in the following condition:
	\begin{condition} \label{assumptionp_new}
		There exist a constant $0 < \kappa < 1$, such that $\log p = O(n^\kappa)$.
	\end{condition} 
	The next result establish the posterior ratio consistency for the complexity prior based approach in (\ref{complexity}).
	\begin{theorem}[Posterior ratio consistency for complexity priors] \label{thm4.1}
		Consider the complexity prior based model described in (\ref{complexity}). Under Assumptions \ref{assumptiontruemodel}, \ref{assumptionmodelsize}, \ref{assumptionhyper} and Condition \ref{assumptionp_new}, the following holds: 
		$$
		\max_{\bm k \ne {\bm t}} \frac{\pi(\bm k|\bm y_n)}{\pi(\bm t|\bm y_n)} \rightarrow 0, \quad \mbox{as } n \rightarrow \infty. 
		$$
	\end{theorem}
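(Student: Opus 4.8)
The plan is to follow the architecture of the proof of Theorem \ref{thm1}, replacing the uniform prior bound (Assumption \ref{assumptionprior}) by the explicit complexity-prior ratio and sharpening the marginal-likelihood bound in the underfitting regime. Throughout I would restrict to the event $C_n^c$ of (\ref{largeprobset}), on which $P(C_n^c)\to 1$. Writing $\frac{\pi(\bm k\mid\bm y_n)}{\pi(\bm t\mid\bm y_n)}=\frac{\pi(\bm k)}{\pi(\bm t)}\,\frac{m_{\bm k}(\bm y_n)}{m_{\bm t}(\bm y_n)}$ and using $\pi(k)\propto c_1^{-k}p^{-c_2 k}$, the prior ratio equals $c_1^{-(k-t)}p^{-c_2(k-t)}$. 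The crucial observation is that the marginal ratio $m_{\bm k}/m_{\bm t}$ does not involve the model prior, so Lemmas \ref{lm1}, \ref{lm3} and \ref{lm5}—established under Assumptions \ref{assumptiontruemodel}, \ref{assumptionmodelsize}, \ref{assumptionhyper} only—remain valid verbatim. It therefore suffices to show that the product of the complexity-prior ratio and these marginal bounds tends to zero uniformly over $\bm k\neq\bm t$.

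For the overfitting models $\bm k\supset\bm t$ I would simply multiply Lemma \ref{lm5} by the prior ratio, obtaining a bound of the form $S'\big[\tfrac{T'}{c_1}\,p^{-c_2}\,n^{-\min\{3/4,\,1-\xi\}r}\big]^{k-t}$. Since $p\to\infty$, the bracketed quantity is eventually below one and the bound is maximized at $k-t=1$, where it is $o(1)$; indeed the prior penalty $p^{-c_2}$ alone already forces convergence, so this case is immediate and does not even require Condition \ref{assumptionp_new}.

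The substantive case, and the main obstacle, is the underfitting regime $\bm k\nsupseteq\bm t$ (which also contains the equal-size models $|\bm k|=t$, $\bm k\neq\bm t$, for which the prior ratio is exactly one). Here the prior factor can be inflationary: $p^{-c_2(k-t)}\le p^{c_2 t}=\exp\{O(n^\kappa)\}$ under Condition \ref{assumptionp_new}, so the merely polynomial bound $K'(L')^k n^{-\frac34 rk}$ of Lemma \ref{lm3} is not strong enough to kill it. Instead I would return to Lemma \ref{lm1} and extract an exponential-in-$n$ penalty from the residual ratio. On $C_n^c$, with $\bm u=\bm k\cup\bm t$ so that $P_u X_t=X_t$, any $\bm k\nsupseteq\bm t$ leaves the directions in $\bm t\setminus\bm k$ unexplained, whence $\|(I-P_k)X_t\bm\beta_0\|^2=\Theta(n)$ and $R_k^*-R_t^*\ge c\,n$ for a constant $c>0$; this is where Assumption \ref{assumptiontruemodel} (eigenvalue control for submodels, applied to $\bm u$) and the fixed nonzero true signal enter. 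Consequently the factor $\big(\tfrac{R_t^*+2\alpha_2}{R_k^*+2\alpha_2}\big)^{n/2}$ appearing in both terms of Lemma \ref{lm1} is bounded by $(1+c/\sigma_0^2)^{-n/2}=\exp(-c''n)$ for some $c''>0$.

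Finally I would verify that this exponential decay dominates every inflationary factor uniformly. All remaining terms—the prior factor $\exp\{O(n^\kappa)\}$, the constants $c_1^{-(k-t)}$, and the algebraic factors $B A^k (V/\epsilon_n^2)^{rk}k^{k}$ (or $k^{(r+1)k}$) from Lemma \ref{lm1}—are of size $\exp\{o(n)\}$ once one invokes Assumption \ref{assumptionmodelsize} ($k\le q_n=O(n^\xi)$, $\xi<1$), the rate $\epsilon_n^{-1}=O((\log n)^{1/8})$ of Assumption \ref{assumptiontruemodel}, and $\kappa<1$ from Condition \ref{assumptionp_new}. Hence on $C_n^c$ the underfitting ratios are bounded by $\exp\{-c''n+o(n)\}\to 0$ uniformly in $\bm k$. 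Combining the two cases yields $\max_{\bm k\neq\bm t}\frac{\pi(\bm k\mid\bm y_n)}{\pi(\bm t\mid\bm y_n)}\to 0$ on $C_n^c$, and since $P(C_n^c)\to 1$ the convergence holds in probability. The hard part is exactly this uniform exponential bound: one must check that the $\Theta(n)$ unexplained-signal term has a rate constant bounded away from zero simultaneously for all exponentially many underfitting models, and that the price of the complexity prior, $\exp\{O(n^\kappa)\}$, remains sub-exponential so that it is absorbed by $\exp(-c''n)$—which is precisely the role played by Condition \ref{assumptionp_new}.
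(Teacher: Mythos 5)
Your proposal matches the paper's own argument: the paper also multiplies the marginal-ratio bounds by the complexity-prior ratio $c_1^{t-k}p^{c_2(t-k)}$, disposes of $\bm k\supset\bm t$ via Lemma \ref{lm5}, and handles the inflationary underfitting cases (Lemmas \ref{lm6} and \ref{lm8}) by extracting exactly the exponential penalty you describe, namely $\exp\{-(R_k^*-R_t^*)/(6\sigma_0^2)\}\le p^{-3(c_2+r)t}$ with failure probability $e^{-n\epsilon_n\delta^2/(4\sigma_0^2)}$ coming from the noncentral $\chi^2$ lower bound of Lemma \ref{lm4}. The only organizational difference is that the paper splits your single ``underfitting'' case into $\bm k\subset\bm t$ and the non-nested case, and your closing caveat about uniformity over the exponentially many small models is a fair point that the paper itself treats only implicitly.
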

		Next, we establish the strong selection consistency result which implies that the posterior mass assigned to the true model $\bm t$ converges to $1$ in probability. 
	\begin{theorem}[Strong selection consistency for complexity priors] \label{thm4}
		Consider the complexity prior based model described in (\ref{complexity}). Under Assumptions \ref{assumptiontruemodel}, \ref{assumptionmodelsize}, \ref{assumptionhyper} and Condition \ref{assumptionp_new}, if we future assume $c_2 > 1$, the following holds:
			$$
		{\pi(\bm t|\bm y_n)} \rightarrow 1, \quad \mbox{as } n \rightarrow \infty. 
		$$
	\end{theorem}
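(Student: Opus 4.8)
The plan is to mirror the proof of Theorem \ref{thm2}, but to let the extra $p^{-c_2 k}$ factor of the complexity prior do the work previously done by the polynomial growth restriction on $p$. As before I would work on the event $C_n^c$ of (\ref{largeprobset}), which still satisfies $P(C_n^c)\to 1$ since the bounds (\ref{chisquaredRt})--(\ref{chisquaredPk}) do not use the model prior, and write
\[
\frac{1-\pi(\bm t\mid\bm y_n)}{\pi(\bm t\mid\bm y_n)}=\sum_{\bm k\neq\bm t}\frac{\pi(\bm k)}{\pi(\bm t)}\frac{m_{\bm k}(\bm y_n)}{m_{\bm t}(\bm y_n)},
\]
splitting the sum into overfit models $\bm k\supsetneq\bm t$ and the remaining (``underfit'') models $\bm k\nsupseteq\bm t$. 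The prior ratio is now $\pi(\bm k)/\pi(\bm t)=c_1^{-(k-t)}p^{-c_2(k-t)}$. A preliminary step is to record that the normalizing constant $Z=\sum_{\bm k}c_1^{-|\bm k|}p^{-c_2|\bm k|}$ of the complexity prior is $\Theta(1)$: since $c_2>1$ one has $\binom{p}{k}c_1^{-k}p^{-c_2 k}\le (c_1^{-1}p^{1-c_2})^k$ with $p^{1-c_2}\to 0$, so $Z\to 1$ and hence $\pi(\bm t)=\Theta(p^{-c_2 t})$ and $1/\pi(\bm t)=\exp(O(n^\kappa))$ under Condition \ref{assumptionp_new}.

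For the overfit models I would combine the data-term bound of Lemma \ref{lm5} with the prior ratio and the count $\binom{p-t}{k-t}\le p^{\,k-t}$, obtaining a geometric series in $j=k-t$ whose ratio is of order $c_1^{-1}T'\,p^{1-c_2}\,n^{-\min\{3/4,\,1-\xi\}r}$. Here the hypothesis $c_2>1$ is exactly what forces $p^{1-c_2}\to 0$, so that the combinatorial factor $p^{\,j}$ is dominated by the prior penalty $p^{-c_2 j}$ and the series tends to $0$; the polynomial-in-$n$ decay of Lemma \ref{lm5} only reinforces this. This is the step in which the extra assumption $c_2>1$ is indispensable, and it is precisely the counting factor $\binom{p-t}{k-t}$ that it controls.

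The underfit models are the crux and the main obstacle. For these the complexity prior offers no help---indeed for $k<t$ the ratio $p^{-c_2(k-t)}=p^{c_2(t-k)}$ grows, and the factor $1/\pi(\bm t)=\exp(O(n^\kappa))$ is genuinely large under Condition \ref{assumptionp_new}. Consequently the polynomial bound $K'(L')^k n^{-\frac34 rk}$ of Lemma \ref{lm3} is no longer adequate, because $n^{-\frac34 rk}$ cannot absorb $\exp(O(n^\kappa))$; already a single size-one underfit model would make the ratio diverge. The remedy is to sharpen the data term using the residual gap on $C_n^c$: any model omitting a true covariate satisfies $R_k^*-R_t^*\gtrsim n\,\epsilon_n$ (the fixed nonzero signal $\bm\beta_{0,t}$ being folded into the constant), so by Assumption \ref{assumptiontruemodel} (which keeps $\epsilon_n\gtrsim(\log n)^{-1/8}$) one gets, uniformly over $\bm k\nsupseteq\bm t$ and up to polynomial-in-$k$ corrections controlled by $\xi<1$,
\[
\frac{\{R_t^*+2\alpha_2\}^{\frac n2}}{\{R_k^*+2\alpha_2\}^{\frac n2}}\le\exp\!\Big(-c''\,\frac{n}{(\log n)^{1/8}}\Big).
\]
Factoring this uniform exponential bound out of the sum and using $\sum_{\bm k}\pi(\bm k)\le 1$ then yields
\[
\sum_{\bm k\nsupseteq\bm t}\frac{\pi(\bm k)}{\pi(\bm t)}\frac{m_{\bm k}}{m_{\bm t}}\ \lesssim\ \frac{1}{\pi(\bm t)}\,\exp\!\Big(-c''\,\frac{n}{(\log n)^{1/8}}\Big)=\exp\!\big(O(n^\kappa)\big)\,\exp\!\Big(-c''\,\frac{n}{(\log n)^{1/8}}\Big)\longrightarrow 0,
\]
since $\kappa<1$ makes $n/(\log n)^{1/8}$ of strictly larger order than $n^\kappa$ (and, likewise, than the $n^{\xi}\log n$ coming from the polynomial-in-$k$ corrections). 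This exponential-in-$n$ control is the same mechanism that underlies Theorem \ref{thm4.1}.

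Combining the two cases gives $\{1-\pi(\bm t\mid\bm y_n)\}/\pi(\bm t\mid\bm y_n)\to 0$ on $C_n^c$, and since $P(C_n^c)\to 1$ this delivers $\pi(\bm t\mid\bm y_n)\to 1$ in probability. The delicate point throughout is the underfit analysis: one must verify that the exponential data decay is uniform in $\bm k$ and strong enough to overwhelm the $\exp(O(n^\kappa))$ blow-up arising from the vanishingly small prior mass that the complexity prior assigns to the true model, with $c_2>1$ reserved for taming the combinatorial explosion among the overfit models.
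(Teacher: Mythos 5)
Your proposal is correct and follows essentially the same route as the paper: the paper's Lemmas \ref{lm6}--\ref{lm8} implement exactly your two mechanisms, namely (i) the prior penalty $p^{-c_2(k-t)}$ with $c_2>1$ absorbing the combinatorial factor $\binom{p-t}{k-t}\le p^{k-t}$ for overfitted models (Lemma \ref{lm7}), and (ii) the noncentral chi-squared concentration of the residual gap $R_k^*-R_u^*\gtrsim n\epsilon_n$, which the paper packages in (\ref{finiteR_ratio}) as the bound $\exp\{-(R_k^*-R_t^*)/(6\sigma_0^2)\}\le p^{-3(c_2+r)t}$ and which is precisely your uniform exponential term dominating the $\exp(O(n^\kappa))$ contribution of the prior ratio for models missing a true covariate. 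Your observation that the polynomial bound of Lemma \ref{lm3} cannot by itself handle the underfitted case under Condition \ref{assumptionp_new} is exactly the reason the paper proves the separate Lemmas \ref{lm6} and \ref{lm8} rather than reusing Lemma \ref{lm3}, so the two arguments coincide in substance.
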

	\noindent
	The proof for Theorem \ref{thm4.1} and \ref{thm4} will also be broken into several steps. The following three lemmas establish the upper bound for the marginal posterior ratio between any ``non-true" model and the true model.
	\begin{lemma} \label{lm6}
		Under Assumptions \ref{assumptiontruemodel}, \ref{assumptionmodelsize}, \ref{assumptionhyper} and Condition \ref{assumptionp_new}, when $\bm k \subset \bm t$, for large enough $n > N_1^{\prime\prime}$ (not depending on $k$), the following holds:
		\begin{align} 
		\frac{\pi(\bm k |\bm y_n)}{\pi(\bm t | \bm y_n)}  \le 2M_1 p^{-2c_2t},
		\end{align}
		for some constants $M_1 > 0$.
	\end{lemma}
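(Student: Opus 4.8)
The plan is to split the posterior ratio into a prior factor and a data factor and to let the data factor overpower the prior factor. Writing
\[
\frac{\pi(\bm k\mid\bm y_n)}{\pi(\bm t\mid\bm y_n)}=\frac{\pi(\bm k)}{\pi(\bm t)}\,\frac{m_{\bm k}(\bm y_n)}{m_{\bm t}(\bm y_n)}=c_1^{\,t-k}\,p^{\,c_2(t-k)}\,\frac{m_{\bm k}(\bm y_n)}{m_{\bm t}(\bm y_n)},
\]
the prior factor $c_1^{t-k}p^{c_2(t-k)}$ now \emph{grows} with $p$ because $\bm k\subset\bm t$ forces $k<t$; under Condition \ref{assumptionp_new} it is as large as $\exp\!\big(O(n^{\kappa})\big)$. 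This is exactly why the polynomial-in-$n$ estimate of Lemma \ref{lm3} no longer suffices in the sub-exponential regime, and a genuinely exponential decay of the data factor must be extracted. I would obtain this decay from Lemma \ref{lm1}: both of its summands have the shape $(\text{factor bounded in }k)\times\mathrm{poly}(n,\log n)\times(a/b)^{n/2+\cdots}$ with $a=R_t^*+2\alpha_2$ and $b=R_k^*+2\alpha_2$. Since $\bm t$ is fixed, $k\le t$ is bounded, so $A^k$, $k^{k}$, $(V/\epsilon_n^2)^{rk}$ and the powers $n^{\pm(k-t)}$ all carry uniformly bounded exponents and are absorbed into the $\mathrm{poly}(n,\log n)$ prefactor.

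The crux is to bound $a/b$ away from $1$ on $C_n^{c}$. Because $\bm k\subset\bm t$, we have $\bm u=\bm k\cup\bm t=\bm t$, and omitting the true covariates in $\bm t\setminus\bm k$ inflates the residual sum of squares. Writing $R_k^*-R_t^*=\|(P_t-P_k)\bm y_n\|^2$ and substituting $\bm y_n=X_t\bm\beta_{0,t}+\bm\epsilon_n$, the deterministic part equals $\|(I-P_k)X_t\bm\beta_{0,t}\|^2=\bm\beta_{0,t\setminus k}^{T}X_{t\setminus k}^{T}(I-P_k)X_{t\setminus k}\bm\beta_{0,t\setminus k}$, where the middle matrix is the Schur complement of $X_k^{T}X_k$ in $X_t^{T}X_t$ and hence has smallest eigenvalue at least $n\lambda_t^m\ge n\epsilon_n$ by Assumption \ref{assumptiontruemodel}. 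As $\bm\beta_{0,t}$ is fixed with nonzero entries on $\bm t\setminus\bm k$, this signal is of order $n\epsilon_n$, while the noise term $\|(P_t-P_k)\bm\epsilon_n\|^2\sim\sigma_0^2\chi^2_{t-k}=O_p(1)$ and the Cauchy--Schwarz cross term of order $\sqrt{n/\epsilon_n}$ are both of strictly smaller order. Thus $R_k^*-R_t^*\ge\tfrac12\delta\,n\epsilon_n$ for large $n$ and some fixed $\delta>0$.

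Combining this with $R_k^*+2\alpha_2=O(n\log n)$ on $C_n^{c}$ (using $\bm y_n^{T}P_t\bm y_n\le\sigma_0^2 n\log n$ there) gives $a/b\le 1-\delta'/(\log n)^{9/8}$, so that $(a/b)^{n/2}\le\exp\!\big(-\delta'' n/(\log n)^{9/8}\big)=\exp\!\big(-\Omega(n^{1-o(1)})\big)$, the prefactor $\mathrm{poly}(n,\log n)$ being negligible. Multiplying by the prior factor and recalling $\log p=O(n^{\kappa})$ with $\kappa<1$, the exponent $-\Omega(n^{1-o(1)})+c_2(t-k)\log p\to-\infty$; hence each of the two Lemma \ref{lm1} terms is eventually at most $M_1 p^{-2c_2 t}$, and their sum is at most $2M_1 p^{-2c_2 t}$. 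Because $\bm t$ does not vary with $n$, the family $\{\bm k:\bm k\subset\bm t\}$ is finite, so a single threshold $N_1^{\prime\prime}$ and uniform constants work for all such $\bm k$.

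I expect the signal lower bound of the second paragraph to be the main obstacle: certifying that dropping a true predictor inflates $R_k^*-R_t^*$ by an amount of order $n\epsilon_n$ requires the Schur-complement eigenvalue estimate from Assumption \ref{assumptiontruemodel} together with a clean separation of this signal from the $O_p(1)$ noise projection and the $O(\sqrt{n/\epsilon_n})$ cross term. Once the data factor is shown to be $\exp(-\Omega(n^{1-o(1)}))$, the remainder is bookkeeping, since Condition \ref{assumptionp_new} ensures the exponential data decay overwhelms the sub-exponential prior penalty $p^{c_2(t-k)}$.
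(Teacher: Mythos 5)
Your proposal is correct and follows essentially the same route as the paper: factor the posterior ratio into the prior term $c_1^{t-k}p^{c_2(t-k)}$ times the marginal ratio from Lemma \ref{lm1}, use $\bm u=\bm k\cup\bm t=\bm t$ to reduce $V/\epsilon_n^2$ to a polylog factor, and beat the growing prior factor with the exponential decay of $\bigl(\{R_t^*+2\alpha_2\}/\{R_k^*+2\alpha_2\}\bigr)^{n/2+\cdots}$ coming from the inflation of the residual sum of squares when true covariates are dropped. The only cosmetic difference is that you certify the lower bound on $R_k^*-R_t^*$ by a direct signal/noise/cross-term decomposition, while the paper packages the same Schur-complement eigenvalue estimate into the noncentral $\chi^2$ representation of Lemma \ref{lm4} and the tail bound (\ref{finiteR_ratio}); the two are equivalent in substance.
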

	\begin{lemma} \label{lm7}
			Under Assumptions \ref{assumptiontruemodel}, \ref{assumptionmodelsize}, \ref{assumptionhyper} and Condition \ref{assumptionp_new}, When $\bm k \supset \bm t$, for large enough $n > N^{\prime\prime}$ (not depending on $k$), the following holds:
			\begin{align} 
			\frac{\pi(\bm k |\bm y_n)}{\pi(\bm t | \bm y_n)} \le c_1^{-(k-t)}p^{-c_2(k-t)}.
			\end{align}
	\end{lemma}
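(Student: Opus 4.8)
The plan is to leverage the single structural difference between the complexity-prior model (\ref{complexity}) and the model (\ref{modelspecification})--(\ref{model:5}): only the model-space prior $\pi(k)$ changes, while the conditional priors on $(\bm\beta_k,\tau,\sigma^2)$ are untouched. Hence the marginal density $m_{\bm k}(\bm y_n)$ in (\ref{marginal density}) is literally the same object, and the marginal-ratio bounds derived earlier---specifically Lemma \ref{lm5}---transfer verbatim. First I would decompose the posterior ratio by Bayes' rule as in (\ref{model posterior}),
$$\frac{\pi(\bm k\mid\bm y_n)}{\pi(\bm t\mid\bm y_n)}=\frac{\pi(\bm k)}{\pi(\bm t)}\,\frac{m_{\bm k}(\bm y_n)}{m_{\bm t}(\bm y_n)}.$$
Under $\pi(k)\propto c_1^{-k}p^{-c_2k}$ the unknown normalizing constant cancels, so the prior ratio equals exactly $c_1^{-(k-t)}p^{-c_2(k-t)}$, which is precisely the bound we are asked to prove. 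The entire task therefore reduces to showing $m_{\bm k}(\bm y_n)/m_{\bm t}(\bm y_n)\le 1$ for every overfitted $\bm k\supset\bm t$, uniformly in $k$, for all large $n$.

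To establish this I would restrict to the high-probability event $C_n^c$ and apply Lemma \ref{lm5}, which is available here because it invokes only Assumptions \ref{assumptiontruemodel}, \ref{assumptionmodelsize} and \ref{assumptionhyper} and never the polynomial-rate Assumption \ref{assumptionpvalue} that Condition \ref{assumptionp_new} is meant to replace. Writing $c=\min\{3/4,\,1-\xi\}r>0$ and $x_n=T'n^{-c}$, Lemma \ref{lm5} gives, for $\bm k\supset\bm t$,
$$\frac{m_{\bm k}(\bm y_n)}{m_{\bm t}(\bm y_n)}<S'\bigl(T'n^{-\min\{3/4,\,1-\xi\}r}\bigr)^{k-t}=S'x_n^{\,k-t}.$$
Since $k-t\ge 1$ in the overfitted case, once $n>T'^{1/c}$ one has $x_n<1$, so $x_n^{k-t}$ is decreasing in $k-t$ and the right-hand side is maximized at $k-t=1$. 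It then suffices to force $S'x_n=S'T'n^{-c}\le 1$; taking $N''=\max\{T'^{1/c},(S'T')^{1/c}\}$, which does not depend on $k$, yields $m_{\bm k}(\bm y_n)/m_{\bm t}(\bm y_n)\le 1$ for every $\bm k\supset\bm t$ whenever $n>N''$. Combining this with the exact prior ratio delivers the claimed inequality.

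The computation is routine once Lemma \ref{lm5} is in hand; the one point demanding care is the \emph{uniformity in $k$} of the threshold $N''$. The naive threshold that makes $x_n<1$ is not by itself enough: I must also absorb the constant $S'$, and I must argue that the supremum of $S'x_n^{k-t}$ over all $k-t\ge 1$ is attained at the boundary. Both requirements follow from the monotonicity of $x_n^{k-t}$ in the exponent once $x_n<1$, and packaging them into a single $k$-free $N''$ is the only subtlety. No additional control of $p$ is needed in this regime, so the sub-exponential growth permitted by Condition \ref{assumptionp_new} causes no difficulty for overfitted models.
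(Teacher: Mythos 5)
Your proposal is correct and follows essentially the same route as the paper: the paper's proof of Lemma \ref{lm7} is exactly the one-line combination of the exact prior ratio $c_1^{-(k-t)}p^{-c_2(k-t)}$ with the marginal-ratio bound of Lemma \ref{lm5}, which is $\le 1$ for $n$ large on $C_n^c$. Your added care about choosing a $k$-free threshold $N''$ so that $S'(T')^{k-t}n^{-\min\{3/4,1-\xi\}r(k-t)}\le 1$ uniformly over $k-t\ge 1$ is a detail the paper leaves implicit, and it is handled correctly.
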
 
	\begin{lemma} \label{lm8}
	Under Assumptions \ref{assumptiontruemodel}, \ref{assumptionmodelsize}, \ref{assumptionhyper} and Condition \ref{assumptionp_new}, when $\bm k \nsubseteq \bm t,$ $\bm k \nsupseteq \bm t$ and $\bm k \neq \bm t$, denote $\bm u = \bm k \cup \bm t$. for large enough $n > N_3^{\prime \prime}$ (not depending on $k$), the following holds:
	\begin{align}
	\frac{\pi(\bm k |\bm y_n)}{\pi(\bm t | \bm y_n)} \le c_3^{-(k-t)}p^{-c_2k},
	\end{align}  
	for some constant $c_3 > 0$. 
\end{lemma}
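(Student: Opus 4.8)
The plan is to split the posterior ratio into the contribution of the complexity prior and that of the marginal likelihoods, and then show that the omitted true covariates force the marginal ratio below $p^{-c_2 t}$ on the high-probability event $C_n^c$ of (\ref{largeprobset}), which yields the claim with $c_3=c_1$. By Bayes' rule and the model prior $\pi(k)\propto c_1^{-k}p^{-c_2 k}$ in (\ref{complexity}),
\[
\frac{\pi(\bm k|\bm y_n)}{\pi(\bm t|\bm y_n)}=\frac{\pi(\bm k)}{\pi(\bm t)}\cdot\frac{m_{\bm k}(\bm y_n)}{m_{\bm t}(\bm y_n)}=c_1^{-(k-t)}p^{-c_2(k-t)}\cdot\frac{m_{\bm k}(\bm y_n)}{m_{\bm t}(\bm y_n)} .
\]
Thus the extra false covariates in $\bm k$ are already penalized by the factor $p^{-c_2(k-t)}$, and it suffices to prove $m_{\bm k}(\bm y_n)/m_{\bm t}(\bm y_n)\le p^{-c_2 t}$ for all large $n$, with the threshold not depending on $\bm k$. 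I would bound the marginal ratio by Lemma \ref{lm1}; in both of its terms the governing factor is $\big(\{R_t^*+2\alpha_2\}/\{R_k^*+2\alpha_2\}\big)^{n/2}$, while the remaining pieces ($A^k$, $k^{ck}$, $(V/\epsilon_n^2)^{rk}$, and the polynomial powers of $n$) are at most $e^{O(q_n\log n)}$. These are sub-exponential in $n$ because $k\le q_n=O(n^\xi)$ with $\xi<1$ (Assumption \ref{assumptionmodelsize}) and, on $C_n^c$, $V\le\epsilon_n^{-4}\,\bm y_n^TP_u\bm y_n/eig_1(X_u^TX_u)\le\epsilon_n^{-5}\log n$ is polylogarithmic.

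The crux is to show that omitting true covariates inflates $R_k^*$ by a factor of order $n\epsilon_n$. Writing $\bm u=\bm k\cup\bm t$ and $\bm s=\bm t\setminus\bm k$ (nonempty since $\bm k\nsupseteq\bm t$), I decompose $R_k^*=R_u^*+\bm y_n^T(P_u-P_k)\bm y_n$ and observe that, since $X_t\bm\beta_0\in\mathrm{col}(X_u)$, the signal part of this increment equals $\|(I-P_k)X_{\bm s}\bm\beta_{0,\bm s}\|^2$. Because $X_{\bm s}^T(I-P_k)X_{\bm s}$ is the Schur complement of $X_k^TX_k$ in $X_u^TX_u$, its smallest eigenvalue is at least $eig_1(X_u^TX_u)\ge n\epsilon_n$ by Assumption \ref{assumptiontruemodel}, so the signal term is at least $n\epsilon_n\min_{j\in\bm t}\beta_{0,j}^2$. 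The accompanying cross term is Gaussian with standard deviation of order $\sqrt{n\epsilon_n^{-1}}$ and the pure-noise term is nonnegative; since $n\epsilon_n\gtrsim n(\log n)^{-1/8}$ dominates $\sqrt n\log n$, on $C_n^c$ (after routine control of the cross term) one gets $R_k^*\ge R_u^*+\tfrac12 n\epsilon_n\min_{j\in\bm t}\beta_{0,j}^2$. Combining this with $R_t^*,R_u^*\le\sigma_0^2 n(1+o(1))$ on $C_n^c$ gives $\{R_t^*+2\alpha_2\}/\{R_k^*+2\alpha_2\}\le 1-c\epsilon_n$ for some $c>0$, so the governing factor is at most $e^{-c'n\epsilon_n}$.

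It then remains to absorb the sub-exponential pieces into this exponential and compare with the target. For large $n$, $m_{\bm k}(\bm y_n)/m_{\bm t}(\bm y_n)\le e^{-c'n\epsilon_n+O(q_n\log n)}\le e^{-\frac{c'}{2}n\epsilon_n}$ because $q_n\log n=O(n^\xi\log n)=o(n\epsilon_n)$. Since $t$ is fixed and $\log p=O(n^\kappa)$ with $\kappa<1$ (Condition \ref{assumptionp_new}), we have $c_2 t\log p=O(n^\kappa)=o(n\epsilon_n)$, so $e^{-\frac{c'}{2}n\epsilon_n}\le e^{-c_2 t\log p}=p^{-c_2 t}$ for $n$ large enough; this delivers the marginal bound and hence $\pi(\bm k|\bm y_n)/\pi(\bm t|\bm y_n)\le c_1^{-(k-t)}p^{-c_2 k}$, i.e. the claim with $c_3=c_1$.

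The step I expect to be the main obstacle is the inflation of $R_k^*$: establishing the $\Omega(n\epsilon_n)$ lower bound on the omitted signal sharply enough, and on an event that does not degrade as $\bm k$ varies. The deterministic part is clean through the Schur-complement eigenvalue bound and Assumption \ref{assumptiontruemodel}, but the stochastic cross term must be controlled so that the per-model bounds can subsequently be summed over $\bm k$ in the proof of Theorem \ref{thm4.1}; one must also verify that the exponential rate $n\epsilon_n\sim n^{1-o(1)}$ genuinely dominates both the model-size factor $n^\xi$ and the dimension factor $n^\kappa$ induced by the complexity prior.
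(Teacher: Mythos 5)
Your proposal is correct and follows essentially the same route as the paper: you split off the complexity-prior ratio $c_1^{-(k-t)}p^{-c_2(k-t)}$, bound the marginal ratio via Lemma \ref{lm1}, and drive the conclusion with the same key fact the paper isolates in Lemma \ref{lm4} and (\ref{finitesizeratio})/(\ref{finiteR_ratio}), namely that the Schur-complement eigenvalue bound forces $R_k^*-R_u^*=\bm y_n^T(P_u-P_k)\bm y_n\gtrsim n\epsilon_n\delta^2$, so the residual-ratio factor decays like $e^{-cn\epsilon_n}$ and swamps the $e^{O(q_n\log n)}$ model-size factors and the $p^{O(t)}=e^{O(n^{\kappa})}$ prior factors. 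The only substantive difference is bookkeeping -- the paper extracts just the power $p^{-3(c_2+r)t}$ from that exponential and reuses the case analysis of (\ref{lm3proof2}) and (\ref{thm4proof1})--(\ref{thm4proof3}) split over $k\le t$ and $k>t$, whereas you spend the full exponential in one uniform estimate -- and the uniformity-over-$\bm k$ concern you flag applies equally to the paper's own argument.
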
 
\noindent
\begin{proof}[Proof of Theorem \ref{thm4.1} and \ref{thm4}]
Theorem \ref{thm4.1} immediately follows after Lemma \ref{lm6} to \ref{lm8}.      
We now move on to the proof of Theorem \ref{thm4}. 
It follows from Lemma \ref{lm6} to \ref{lm8} that if we restrict to $C_n^c$, then 
\begin{align*}
\frac{1-\pi(\bm t|\bm y_n)}{\pi(\bm t|\bm y_n)} =& \sum_{\bm k \neq \bm t}\frac{\pi(\bm k|\bm y_n)}{\pi(\bm t|\bm y_n)}\\
\le & \sum_{k \le t}\frac{\pi(\bm k|\bm y_n)}{\pi(\bm t|\bm y_n)} + \sum_{k > t, \bm k \supset \bm t}\frac{\pi(\bm k|\bm y_n)}{\pi(\bm t|\bm y_n)} + \sum_{k > t, \bm k \nsupseteq \bm t}\frac{\pi(\bm k|\bm y_n)}{\pi(\bm t|\bm y_n)}\\
\le & \sum_{k = 1}^{t} \binom pk  M_2p^{-2c_2t} + \sum_{k-t = 1}^{q_n-t} \binom {p-t}{k-t} c_1^{-(k-t)}p^{-c_2(k-t)}\\
&+  \sum_{k = 1}^{q_n} \binom {p}{k} c_3^{-(k-t)}p^{-c_2k}.
\end{align*}
By $\binom {p} {k} \le p^{k}$ and $c_2 > 1$, we get
\begin{align*}
\frac{1-\pi(\bm t|\bm y_n)}{\pi(\bm t|\bm y_n)} \rightarrow 0, \mbox{ i.e. } \pi(\bm t|\bm y_n) \rightarrow 1, \quad \mbox{as } n \rightarrow \infty,
\end{align*}
which completes our proof for Theorem \ref{thm4}.
\end{proof}
\begin{remark}
	Note that though under the complexity priors, the restriction on $p$ is relaxed in terms of proving strong selection consistency, we find that in our simulation studies, the model selection performance under uniform-like prior is much better than that under the complexity priors, hence from a practical point of view, one would still prefer the hyper-pMOM with uniform-like prior over the model space. As indicated in \citep{Shin.M:2015}, since the pMOM priors already induce a strong penalty on the model size, it is no longer necessary to penalize larger models through priors on the model space.	
\end{remark}
\section{Computation} \label{sec:computation}
\noindent
The integral formulation in (\ref{model posterior}) is quite complicated, and hence the posterior probabilities can not be obtained in closed form. Hence, we use 
Laplace approximation to compute $m_{\bm k}(\bm y_n)$ and $\pi(\bm k | \bm y_n)$. A similar approach to compute posterior 
probabilities has been used in \citep{Johnson:Rossell:2012} and \citep{Shin.M:2015}. 

Note that for any model $\bm k$, when $A_{k} = I_{k}$, the normalization constant $d_{k}$ in (\ref{modelspecification}) is given by 
$d_{k} = \left((2r-1)!!\right)^{-k}$. Let 
\begin{align} \label{laplacemarginal}
\begin{split}
f(\bm \beta,\tau,\sigma^2) =& \log(m_{\bm k}(\bm y_n)) \\
=& \log\left(\pi(\bm y_n|\sigma^2)\pi(\bm \beta_k|\tau,\sigma^2)\pi(\tau)\pi(\sigma^2)\right)\\
=& -k\log\left((2r-1)!!\right) - \frac {n+k} 2\log(2\pi)  - \left(rk +\frac {n+k} 2 + \alpha_1 + 1\right)\log(\sigma^2) \\
&- \left(rk+\frac {k+3} 2\right)\log\tau -\left(\frac{(\bm y_n - X_k\bm \beta_k)^T(\bm y_n - X_k\bm \beta_k)}{2\sigma^2} \right)\\
&-\left(\frac{\bm \beta_k^T\bm \beta_k }{2\tau\sigma^2} + \frac{\alpha_2}{\sigma^2} + \frac{n}{2\tau}\right)+ \sum_{i=1}^k2r\log(\beta_{k_i})
\end{split}
\end{align}
For any model $\bm k$, the Laplace approximation of $m_{\bm k}(\bm y_n)$ is given by
\begin{align} \label{laplace}
(2\pi)^{\frac k 2 + 1}\exp\left\{f(\hat{\bm \beta_k},\hat{\tau},\hat{\sigma^2})\right\}|V(\hat{\bm \beta_k},\hat{\tau},\hat{\sigma^2})|^{-\frac 1 2},
\end{align}
where $(\hat{\bm \beta_k},\hat{\tau},\hat{\sigma^2}) = \argmax_{(\bm \beta,\tau,\sigma^2)}f(\bm \beta,\tau,\sigma^2)$ obtained via the optimization function nlm in R using a Newton-type algorithm and $V(\hat{\bm \beta_k},\hat{\tau},\hat{\sigma^2})$ is a $(k+2)\times(k+2)$ symmetric matrix with the following blocks:
\begin{align}
\begin{split}
&V_{11} = \frac 1 {\tau\sigma^2}I_k + \frac 1 {\sigma^2}X_k^TX_k + diag\left\{\frac{2r}{\beta_{k_1}^2}, \ldots, \frac{2r}{\beta_{k_k}^2} \right\}, V_{12} = -\frac{\bm \beta_k}{\tau^2\sigma^2},\\
&V_{13} = -\frac{\bm \beta_k}{\tau\sigma^4} - \frac{X_k^TX_k\bm \beta_k - X_k^T\bm y_n}{\sigma^4}, V_{22} = -\frac{rk + \frac k 2 + \frac 3 2}{\tau^2} + \frac{\bm \beta_k^T\bm \beta_k}{\tau^3\sigma^2} + \frac{n}{\tau^3},V_{23} = \frac{\bm \beta_k^T\bm \beta_k}{2\tau^2\sigma^4},\\
&V_{33} = -\frac{rk+\frac k 2 + \frac n 2 + \alpha_1 + 1}{\sigma^4} +\frac{\bm \beta_k^T\bm \beta_k}{\tau\sigma^6} + \frac{(\bm y_n - X_k\bm \beta_k)^T(\bm y_n - X_k\bm \beta_k)}{4\sigma^6} + \frac{2\alpha_2}{\sigma^6}.
\end{split}
\end{align}
The above Laplace approximation can be used to compute the log of the posterior probability ratio between any given model $\bm k$ and true model $\bm t$, and select a model $\bm k$ with the highest probability. Based on a reviewer's comment, we would like to point out that  Laplace approximation could have potential drawbacks. Firstly, as indicated in \cite{Rossell:Telesca:2017}, for non-local priors, Laplace approximations fail to consistently estimate the marginal likelihood for overfitted models. Secondly, the Newton-type algorithm used for optimizing (6.1) could be quite time consuming, especially when the size of the model and the dimension $p$ are large. For example, in Figure 5, the runtime for the hyper-pMOM approach increases as $p$ grows. Despite these potential drawbacks of the Laplace approximation, we would like to point out that in these high-dimensional settings, full posterior sampling using Markov chain Monte Carlo algorithms is highly inefficient and often not feasible from a practical perspective. Hence, the usage of Laplace approximation is still much better than MCMC.

We then adopt the scalable stochastic search algorithm proposed by \cite{Shin.M:2015} called Simplified Shotgun Stochastic Search with Screening (S5). Utilizing the Laplace approximations of the marginal probabilities in (\ref{laplace}), the S5 method aims at rapidly identifying regions of high posterior probability and finding the maximum a posteriori (MAP) model. Detailed algorithm steps can be found in \cite{Shin.M:2015} and the implementation can be found in the R package ``BayesS5". 
\section{Experiments} \label{sec:experiments}

\noindent
\subsection{Simulation I: Illustration of posterior ratio consistency} \label{sec:illustration:posterior:ratio}
In this section, we illustrate the model selection consistency results in Theorems \ref{thm1} and \ref{thm2} 
using a simulation experiment.  The similar simulation setting was also considered in the literature \citep{CKG:2017} by Cao, Khare and Ghosh, in which the authors showed posterior consistency in graphical model setting. We generate our data according to a Gaussian linear model based on the following mechanism. First, we vary $p$ from $500$ to $3000$ and let $n = p/5$. Then, for each fixed $p$, ten covariates are taken as active in the true model with coefficients  $\bm \beta_0 = \left(1.1,1.2,1.3,\ldots,1.9,2\right)^T$ and set $\sigma = 1$. Also, the signs of the true regression coefficients were randomly changed with probability $0.5$. Next, we generate $n$ i.i.d. observations from the $N(\bm 0_p, \Sigma )$ distribution as rows of the covariate matrix $X$. We then examine posterior ratio consistency under three different cases of $\Sigma$ by computing the log posterior ratio of a ``non-true" model $\bm k$ and $\bm t$ as follows. 
\begin{enumerate} 
	\item Case $1$: Isotropic design, where $\Sigma = I_p.$
	\item Case $2$: Compound symmetry design, where $\Sigma_{ij} = 0.5$, if $i \neq j$ and $\Sigma_{ii} = 1,$ for all $1 \le i \le j \le p.$
	\item Case $3$: Autoregressive correlated design; where $\Sigma_{ij} = 0.5^{|i-j|},$ for all $1 \le i \le j \le p$.
\end{enumerate}

\noindent
Throughout this simulation study, we set the hyperparameters $r=2$ and $\alpha_1 = \alpha_2 = 0.01$. The log of the posterior probability ratio for 
various cases of $\Sigma$ is provided in Figure \ref{posterior_ratio_plot}. Note that for each of these cases, we compute the log ratio under four different scenarios of ``non-true'' model $\bm k$. \begin{enumerate}
	\item Scenario $1$: $\bm k$ is a subset of $\bm t$ and $|\bm k| = \frac 1 2 |\bm t|.$
	\item Scenario $2$: $\bm k$ is a superset of $\bm t$ and $|\bm k| = 2 |\bm t|.$
	\item Scenario $3$: $\bm k$ is not necessarily a subset of $\bm t$, but $|\bm k| = \frac 1 2 |\bm t|.$
	\item Scenario $4$: $\bm k$ is not necessarily a superset of $\bm t$, but $|\bm k| = 2 |\bm t|.$
\end{enumerate}
As expected the log of the posterior probability ratio for any ``non-true" model $\bm k$ compared to the true model $\bm t$ are all decreasing to large negative values as $p$ increases, thereby providing a numerical illustration of Theorems \ref{thm1} and \ref{thm2}. 
\begin{figure}[htbp]
	\centering
	\begin{subfigure}
		{\includegraphics[width=35mm]{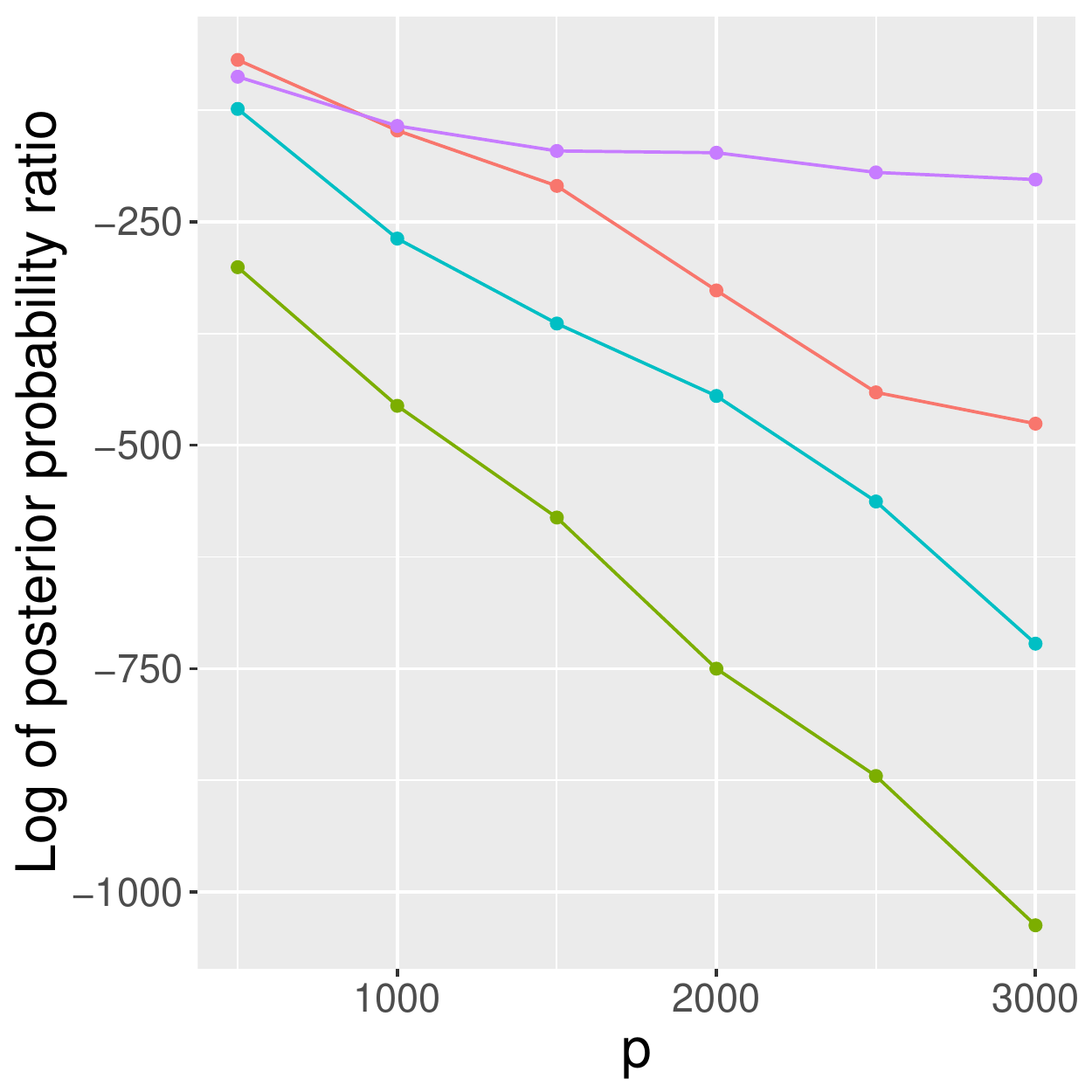}}
	\end{subfigure}
	\begin{subfigure}
		{\includegraphics[width=35mm]{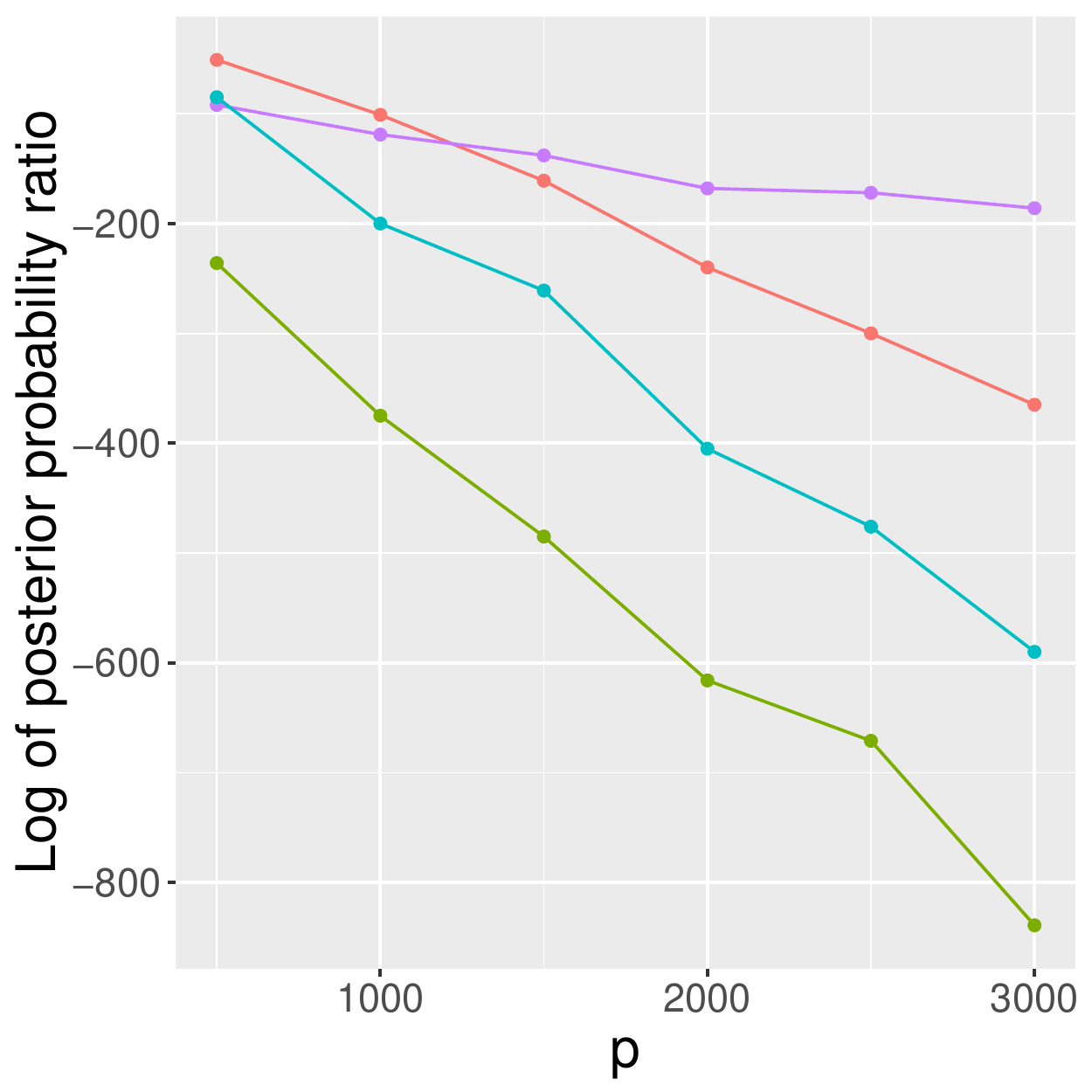}}
	\end{subfigure}
	\begin{subfigure}
		{\includegraphics[width=49mm]{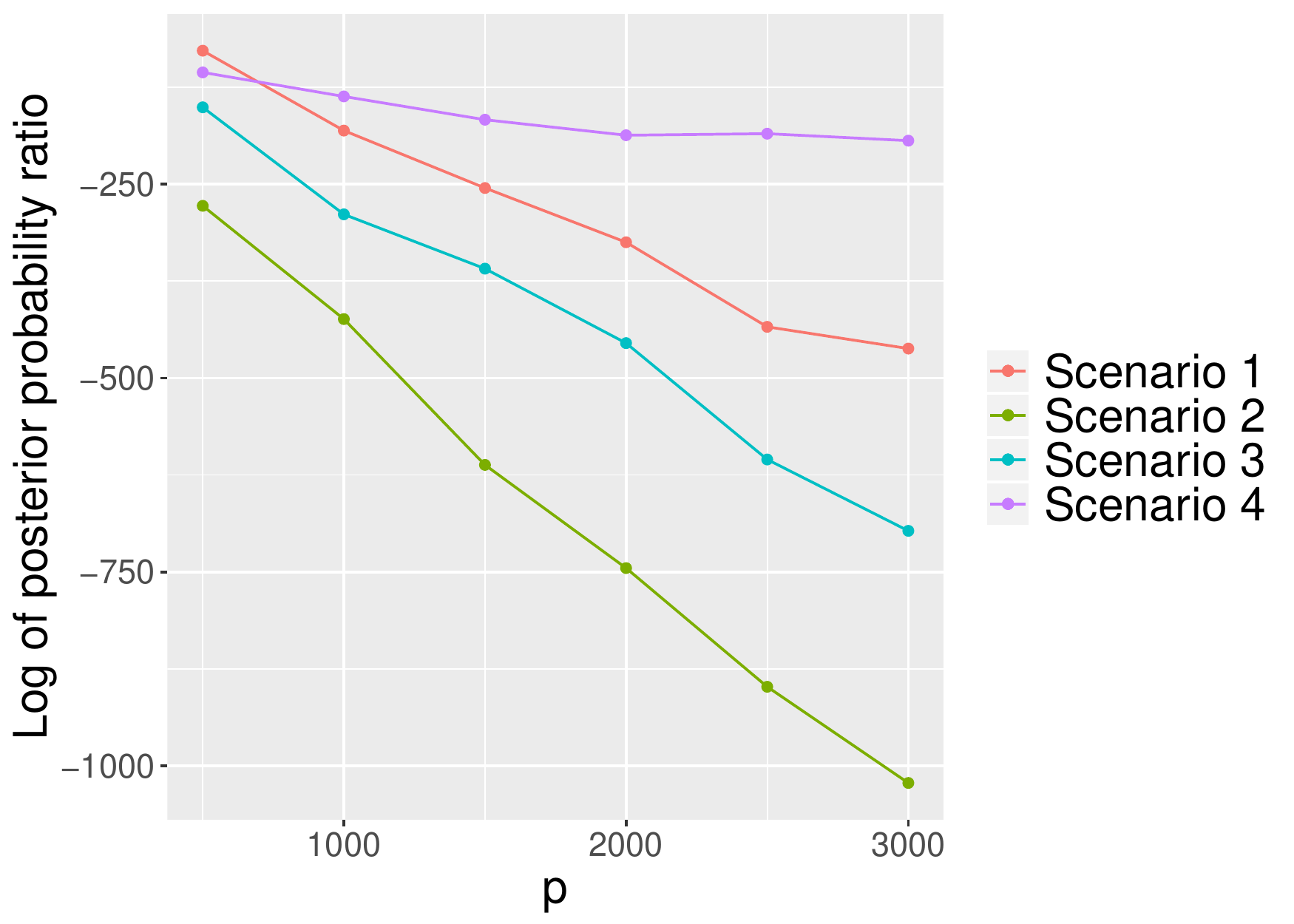}}
	\end{subfigure}
	\caption{Log of posterior probability ratio for $\bm k$ and $\bm t$ for various choices of the ``non-true" model $\bm k$. Left: case $1$; middle: case $2$; right: case $3$. }
	\label{posterior_ratio_plot}
\end{figure}

\subsection{Simulation II: Illustration of model selection} \label{sec:illustration:model:selection}

\noindent
In this section, we perform a simulation experiment to illustrate the potential advantages of using our Bayesian approach. Several different values of $p$ ranging from $500$ to $3000$ are considered, while $n = p/5$. For each fixed $p$, we construct two sets of $\bm \beta_0$. The first set is generated by the same mechanism as in Section \ref{sec:illustration:posterior:ratio}. The other set  also considered is $(0.3,0.35,0.4,0.45,0.5,1.1,1.2,1.3,1.4,1.5)^T$. Next, we generate $n$ i.i.d. observations from the $N(\bm 0_p, \Sigma)$ distribution as rows of covariate matrix $X$ under the following three cases similar to Section \ref{sec:illustration:posterior:ratio}. 
\begin{enumerate} \label{sigma_cases}
	\item Case $1$: Isotropic design, where $\Sigma = I_p.$
	\item Case $2$: Compound symmetry design, where $\Sigma_{ij} = 0.5$, if $i \neq j$ and $\Sigma_{ii} = 1,$ for all $1 \le i \le j \le p.$
	\item Case $3$: Autoregressive correlated design; where $\Sigma_{ij} = 0.5^{|i-j|},$ for all $1 \le i \le j \le p$.
\end{enumerate}

\noindent
Then, we perform model selection using our hierarchical Bayesian approach. This is done by computing the posterior probabilities 
using the Laplace approximation in (\ref{laplace}), and exploring the model space using the simplified stochastic shotgun stochastic 
search algorithm in \citep{Shin.M:2015}.

We would like to remind the readers that in our model, we don't need to specify a fixed value for $\tau$, but rather put a prior on the 
parameter $\tau$ (as opposed to \citep{Johnson:Rossell:2012} and \citep{Shin.M:2015} when $\tau$ is treated as a fixed parameter). 
In Table \ref{model:selection:table:n200} and Table \ref{model:selection:table:beta2}, we also provide model selection performance results with fixed $\tau$ at $\tau = 0.072$ (the default value for the second-order pMOM prior suggested in \cite{Johnson:Rossell:2012}), and numerical values in R package BayesS5 (a choice for fixed $\tau$ from the results in \citep{Shin.M:2015}). Additionally, we also provide model selection performance results for the Lasso \citep{Lasso:1996} and SCAD \citep{SCAD:2001} penalized likelihood methods. 

The model selection performance of these five methods is then compared using several different measures of structure such as 
positive predictive value, true positive rate and false positive rate (average over $20$ independent repetitions). 
Positive Predictive Value (PPV) represents the proportion of true model indexes among all the indexes detected by the given procedure. 
True Positive Rate (TPR) measures the proportion of true indexes detected by the given procedure among all the true indexes
from the true model. False Positive Rate (FPR)  represents the proportion of falsely identified indexes among all the non- true indexes
from the true model. PPV, TPR and FPR are defined as $$\mbox{PPV} = \frac{\text{TP}}{\text{TP + FP}}, \quad \mbox{TPR} = \frac{\text{TP}}{\text{TP + FN}}, \quad \mbox{FPR} = \frac{\text{FP}}{\text{FP + TN}},$$ where TP, TN, FP and FN correspond to true positive, true negative, false positive and false negative, respectively. One would like the PPV and TPR values to be as close to $1$ as possible, while FPR to be as close to $0$ as possible. The results are summarized in Table \ref{model:selection:table:n200} and Table \ref{model:selection:table:beta2}. 

To better visualized the results, in Figure \ref{roc}, we provide the ROC curves when $|\bm \beta_0| =  \left(1.1,1.2,1.3,\ldots,1.9,2\right)^T$ and $\Sigma$ for generating $\bm X$ yields a compound symmetry design. We also include the complexity prior based approach illustrated in Section \ref{sec:complexity}. As we can see, the complexity prior based approach captures fewer true indexes compared to other approaches.

\begin{table}
	\centering
	\scalebox{0.78}{
		\begin{tabular}{cccccccccccccccc}
			\hline
			\multicolumn{1}{c}{\multirow{1}{*}{}}& \multicolumn{3}{c}{Lasso}    & \multicolumn{3}{c}{SCAD}   & \multicolumn{3}{c}{BayesS5}  & \multicolumn{3}{c}{$\tau = 0.072$}  & \multicolumn{3}{c}{Hyper-pMOM}  \\
			p    & PPV       & TPR & FPR        & PPV       & TPR & FPR       & PPV      & TPR & FPR      & PPV     & TPR & FPR    & PPV       & TPR & FPR         \\
			\hline
			200  & 0.29 & 1 & 0.14 & 0.93 & 1 & 0.01 &1  &1  &0  & 0.96 & 1  & 0 & 1         & 1 & 0        \\
			500  & 0.19 & 1 & 0.09 & 0.90 & 1 & 0 & 1 &1  &0  & 0.88 & 1  & 0 & 1         & 1 & 0        \\
			1000 & 0.18 & 1 & 0.04 & 0.87 & 1 & 0 & 0.98 & 1 &0 & 0.69 & 1   & 0 & 1         & 1 & 0        \\
			1500 & 0.18 & 1 & 0.03 & 0.84 & 1 & 0 & 0.98 & 1 &0  & 0.74& 1   & 0 & 1         & 1 & 0        \\
			2000 & 0.17 & 1 & 0.02 & 0.82 & 1 & 0 & 0.98 & 1 & 0 & 0.59 & 1   & 0 & 1         & 1 & 0        \\
			2500 &0.13  & 1 & 0.02 & 0.90 & 1 & 0 & 0.97 & 1 & 0 &0.49 &1 & 0 & 1 & 1 & 0  \\
			\hline
		\end{tabular}
	}
	
	\scalebox{0.78}{
		\begin{tabular}{cccccccccccccccc}
			\hline
			\multicolumn{1}{c}{\multirow{1}{*}{}}& \multicolumn{3}{c}{Lasso}    & \multicolumn{3}{c}{SCAD}   & \multicolumn{3}{c}{BayesS5}  & \multicolumn{3}{c}{$\tau = 0.072$}  & \multicolumn{3}{c}{Hyper-pMOM}  \\
			p    & PPV       & TPR & FPR        & PPV       & TPR & FPR       & PPV      & TPR & FPR      & PPV     & TPR & FPR    & PPV       & TPR & FPR         \\
			\hline
			200  & 0.27 & 1 & 0.15 & 0.96 & 1 & 0 &0.94  & 1 & 0    &0.81  & 1  & 0.01 &1          & 1 & 0           \\
			500  & 0.21 & 1 & 0.09 & 0.94 & 1 & 0  &0.95   &1 &0   &0.59  & 1 & 0.02 &1          & 1 & 0           \\
			1000 & 0.17 & 1 & 0.05 & 0.92 & 1 & 0 &0.95    & 1  & 0 &0.46 & 1 & 0.01  &0.99  & 1 & 0  \\
			1500 & 0.19 & 1 & 0.03 & 0.90 & 1 & 0&0.94  &1 &0   &0.42  & 1 & 0.01 &1  & 1 & 0 \\
			2000 & 0.13 & 1 & 0.04 & 0.84 & 1 & 0 &0.87 &1	&0 &0.41  & 1 & 0.01 & 0.99 & 1 & 0 \\
			2500 &0.12  & 1 & 0.03 & 0.92 & 1 & 0&0.88  & 1 & 0 & 0.36 & 1  & 0.01&0.99 &1 & 0\\ 
			\hline
		\end{tabular}
	}

	\scalebox{0.78}{
		\begin{tabular}{cccccccccccccccc}
			\hline
			\multicolumn{1}{c}{\multirow{1}{*}{}}& \multicolumn{3}{c}{Lasso}    & \multicolumn{3}{c}{SCAD}   & \multicolumn{3}{c}{BayesS5}  & \multicolumn{3}{c}{$\tau = 0.072$}  & \multicolumn{3}{c}{Hyper-pMOM}  \\
			p    & PPV       & TPR & FPR        & PPV       & TPR & FPR       & PPV      & TPR & FPR      & PPV     & TPR & FPR    & PPV       & TPR & FPR         \\
			\hline
			
			200  & 0.25 & 1 & 0.17  & 0.91  & 1& 0 &1 & 1 & 0  &0.96   & 1  & 0&  1        & 1 & 0          \\
			500  & 0.20 & 1 & 0.10  & 0.91 & 1 & 0 & 0.98 &1  &0  &0.83   & 1 & 0& 1         & 1 & 0          \\
			1000 & 0.18 & 1 & 0.05 & 0.85  & 1 & 0 &0.97  &1 &0  &0.73 & 1 & 0 &1 & 1 & 0\\
			1500 & 0.16 & 1 & 0.04 & 0.83 & 1  & 0  &0.96 &1 	&0 & 0.71  & 1 & 0 & 1 & 1 & 0   \\
			2000 & 0.17 & 1 & 0.04 & 0.83 & 1  & 0  &0.96 & 1 &0 &0.57  & 1 & 0 &0.99 & 1 & 0  \\
			2500 & 0.14 & 1 & 0.03 & 0.85 & 1  & 0 &0.96  & 1 & 0 & 0.56 & 1  & 0 & 1 & 1 & 0\\
			\hline
		\end{tabular}
	}
	\caption{Model selection performance comparison table when $|\bm \beta_0| =  \left(1.1,1.2,1.3,\ldots,1.9,2\right)^T$. Top: case $1$; middle: case $2$; bottom: case $3$.}
	\label{model:selection:table:n200}
\end{table}

\begin{figure}[htbp]
	\centering
	\begin{subfigure}
		{\includegraphics[width=35mm]{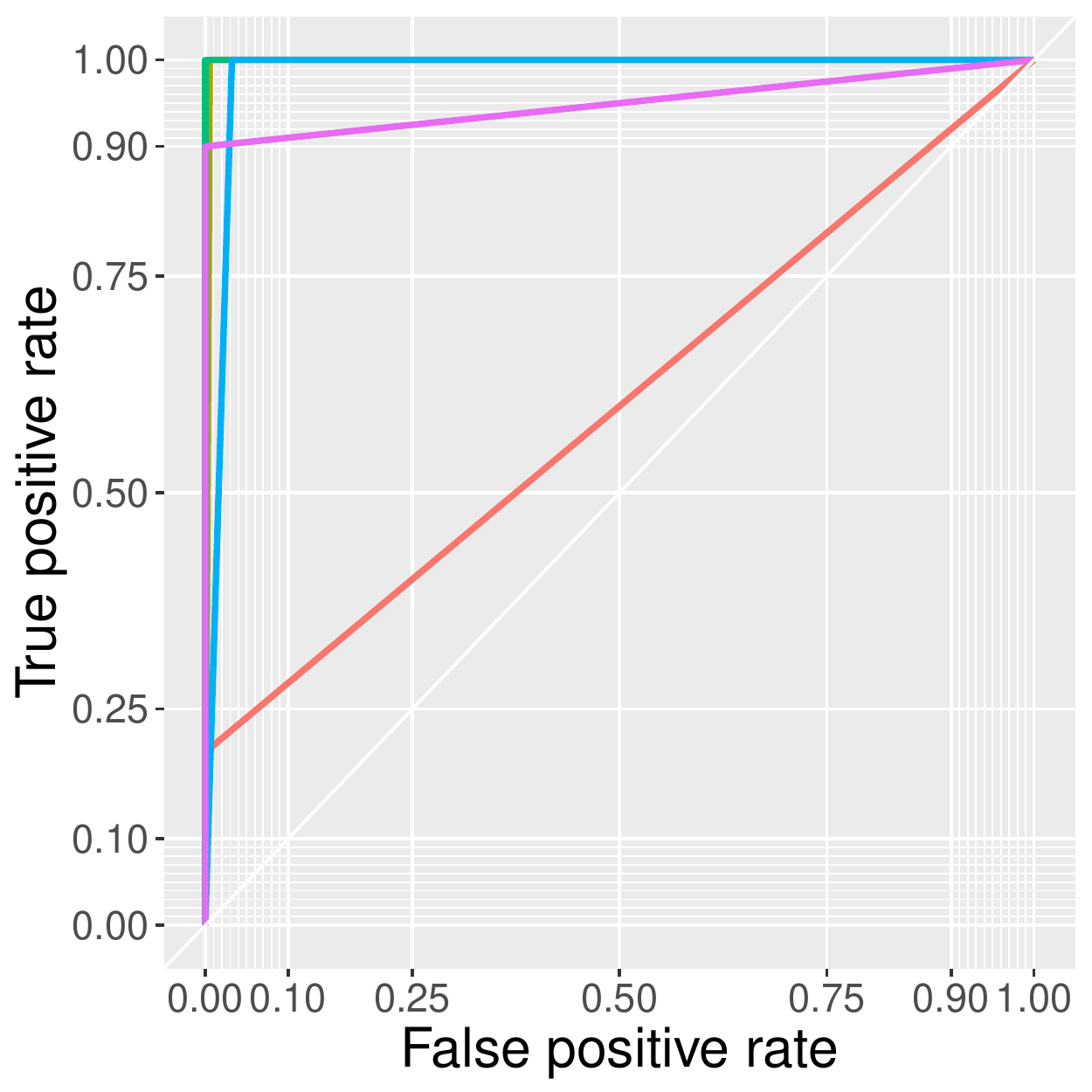}}
	\end{subfigure}
	\begin{subfigure}
		{\includegraphics[width=35mm]{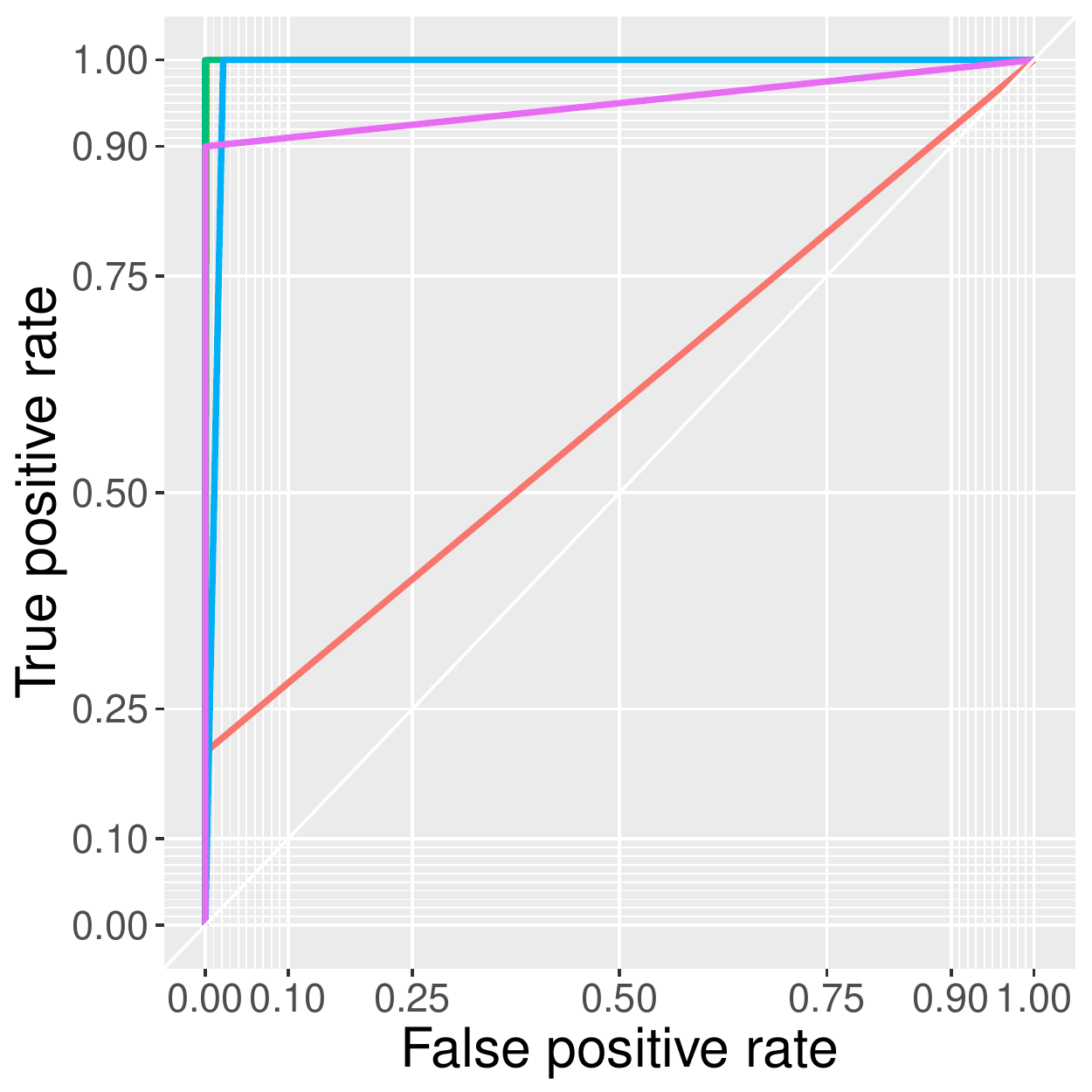}}
	\end{subfigure}
	\begin{subfigure}
		{\includegraphics[width=52.3mm]{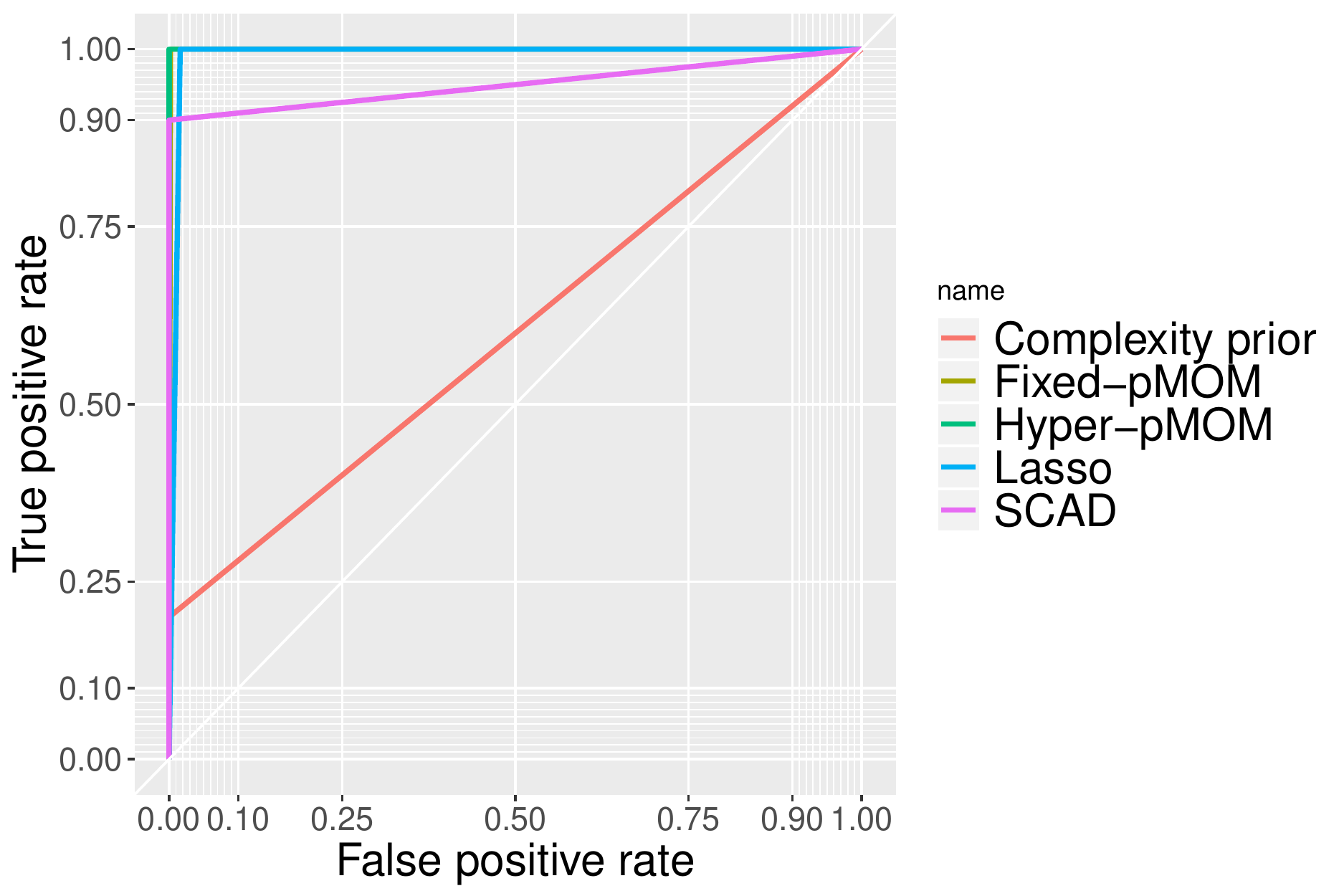}}
	\end{subfigure}
	\caption{ROC curves when $p = 1500$ (left), $p = 2000$ (middle), $p = 2500$ (right).}
	\label{roc}
\end{figure}

\begin{table}
	\centering
	\scalebox{0.78}{
		\begin{tabular}{cccccccccccccccc}
			\hline
			\multicolumn{1}{c}{\multirow{1}{*}{}}& \multicolumn{3}{c}{Lasso}    & \multicolumn{3}{c}{SCAD}   & \multicolumn{3}{c}{BayesS5}  & \multicolumn{3}{c}{$\tau = 0.072$}  & \multicolumn{3}{c}{Hyper-pMOM}  \\
			p    & PPV       & TPR & FPR        & PPV       & TPR & FPR       & PPV      & TPR & FPR      & PPV     & TPR & FPR    & PPV       & TPR & FPR         \\
			\hline
			200  & 0.32 & 1 & 0.22 & 1    & 1    & 0    & 0.97 & 0.95    & 0 &0.95  &0.96    &0  &1   &0.86     & 0 \\
			500  & 0.27 & 1 & 0.06 & 0.48 & 1    & 0.02 & 0.97 & 0.93    & 0 &0.89   &0.93   &0   &1  &0.84    & 0 \\
			1000 & 0.13 & 1 & 0.07 & 0.59 & 1    & 0.01 & 0.95 & 0.95 & 0 &0.88   &0.89   &0   &1  &0.88  & 0 \\
			1500 & 0.23 & 1 & 0.02 & 0.61 & 0.89 & 0    &0.97 & 0.90  & 0 &0.84   &0.90   & 0 &1  &0.88 & 0 \\
			2000 & 0.19 & 1 & 0.03 & 0.63 & 1    & 0    & 0.97 & 0.89 & 0 &0.74   &0.89   &0     &1  &0.84     & 0 \\
			2500 & 0.16 & 1 & 0.03 & 0.59 & 1    & 0.01 & 0.99 & 0.87 & 0 &0.77  &0.88    & 0 &1   &0.83  & 0\\
			\hline
		\end{tabular}
	}
	
	\scalebox{0.78}{
		\begin{tabular}{cccccccccccccccc}
			\hline
			\multicolumn{1}{c}{\multirow{1}{*}{}}& \multicolumn{3}{c}{Lasso}    & \multicolumn{3}{c}{SCAD}   & \multicolumn{3}{c}{BayesS5}  & \multicolumn{3}{c}{$\tau = 0.072$}  & \multicolumn{3}{c}{Hyper-pMOM}  \\
			p    & PPV       & TPR & FPR        & PPV       & TPR & FPR       & PPV      & TPR & FPR      & PPV     & TPR & FPR    & PPV       & TPR & FPR         \\
			\hline
			200  & 0.32 & 1   & 0.12 & 0.88 & 0.7  & 0.01 &0.99     &0.77  & 0 &0.79  &0.84   &0.01  & 1    & 0.81 & 0 \\
			500  & 0.26 & 1   & 0.06 & 1    & 0.83 & 0    &1    &0.72  & 0&0.74  &0.82   &0.01  & 1    & 0.83 & 0 \\
			1000 & 0.19  & 0.89 & 0.02 & 0.57 & 0.81 & 0.01 &1    &0.69   & 0 &0.60  &0.84  &0.01  & 1    & 0.79 & 0 \\
			1500 & 0.19 & 0.91 & 0.03 & 0.57 & 0.80  & 0.05 &0.99     &0.65  & 0 &0.70   &0.80   &0  & 1    & 0.79 & 0 \\
			2000 & 0.17 & 1   & 0.15 & 0.66 & 0.79 & 0.03 &0.95  &0.67  & 0 &0.62  &0.80  &0  & 0.94 & 0.74 & 0 \\
			2500 & 0.18 & 1   & 0.19  & 0.51 & 0.72 & 0.03 & 0.95 & 0.64  & 0 & 0.57 & 0.78 & 0 & 0.95 & 0.70 & 0\\
			\hline
		\end{tabular}
	}

	\scalebox{0.78}{
		\begin{tabular}{cccccccccccccccc}
			\hline
			\multicolumn{1}{c}{\multirow{1}{*}{}}& \multicolumn{3}{c}{Lasso}    & \multicolumn{3}{c}{SCAD}   & \multicolumn{3}{c}{BayesS5}  & \multicolumn{3}{c}{$\tau = 0.072$}  & \multicolumn{3}{c}{Hyper-pMOM}  \\
			p    & PPV       & TPR & FPR        & PPV       & TPR & FPR       & PPV      & TPR & FPR      & PPV     & TPR & FPR    & PPV       & TPR & FPR         \\
			\hline
			200  & 0.37 & 1   & 0.09 & 0.7  & 1    & 0.02 &0.99     &0.92    & 0 & 0.94    & 0.93   & 0  & 1 & 0.90   & 0 \\
			500  & 0.23 & 1   & 0.07 & 0.89 & 0.79  & 0    &0.96    &0.90  & 0 & 0.89 & 0.87 & 0 & 1 & 0.88 & 0 \\
			1000 & 0.13 & 1   & 0.07 & 0.48 & 0.95 & 0.01 &0.96   &0.88  & 0 & 0.77 & 0.86   & 0 & 1 & 0.84 & 0 \\
			1500 & 0.21 & 1   & 0.03 & 0.36 & 0.80  & 0.01 &0.97    &0.87  & 0 & 0.75 & 0.86   & 0 & 1 & 0.89  & 0 \\
			2000 & 0.16 & 0.9 & 0.03 & 0.35 & 0.71  & 0.01 &0.95    &0.88  & 0  &0.84   &0.82   & 0 & 1 & 0.86 & 0 \\
			2500 & 0.13 & 1   & 0.03 & 0.45 & 0.68 & 0    &0.95  &0.81 & 0 &0.80   &0.82   &0   & 1 & 0.78 & 0\\
			\hline
		\end{tabular}
	}
	\caption{Model selection performance comparison table when $\bm \beta_0 = (0.3,0.35,0.4,0.45,0.5,1.1,1.2,1.3,1.4,1.5)^T$. Top: case $1$; middle: case $2$; bottom: case $3$.}
	\label{model:selection:table:beta2}
\end{table}

Based on Table \ref{model:selection:table:n200} and \ref{model:selection:table:beta2}, it is clear that our Bayesian approach outperforms both the penalized likelihood approaches and the fixed $\tau$ setting based on almost all measures and under all cases. The PPV values for our hyper-pMOM approach are all higher than the other four methods, which means our method can identify the true model more precisely. In addition, The FPR values for the Bayesian approach are all significantly smaller than the FPR values for the penalized approaches. It is also worth noting that especially in lower dimensions, the numerical procedure for choosing $\tau$ implemented in BayesS5 needs additional run time as shown in Figure \ref{fig:runtime}, while in our simulation studies, not only this step is omitted, we are still able to better simulation results.  Overall, this experiment illustrates the fact that the Bayesian approach can lead to a significant improvement in model selection performance as compared to penalized likelihood methods. Also, the hierarchical Bayesian approach introduced in this paper can lead to a significant improvement in performance as compared to the fixed $\tau$ Bayesian approach when sample size is much smaller than the number of predictors. 
\begin{figure} \label{fig:runtime}
	\centering
	\includegraphics[width=75mm]{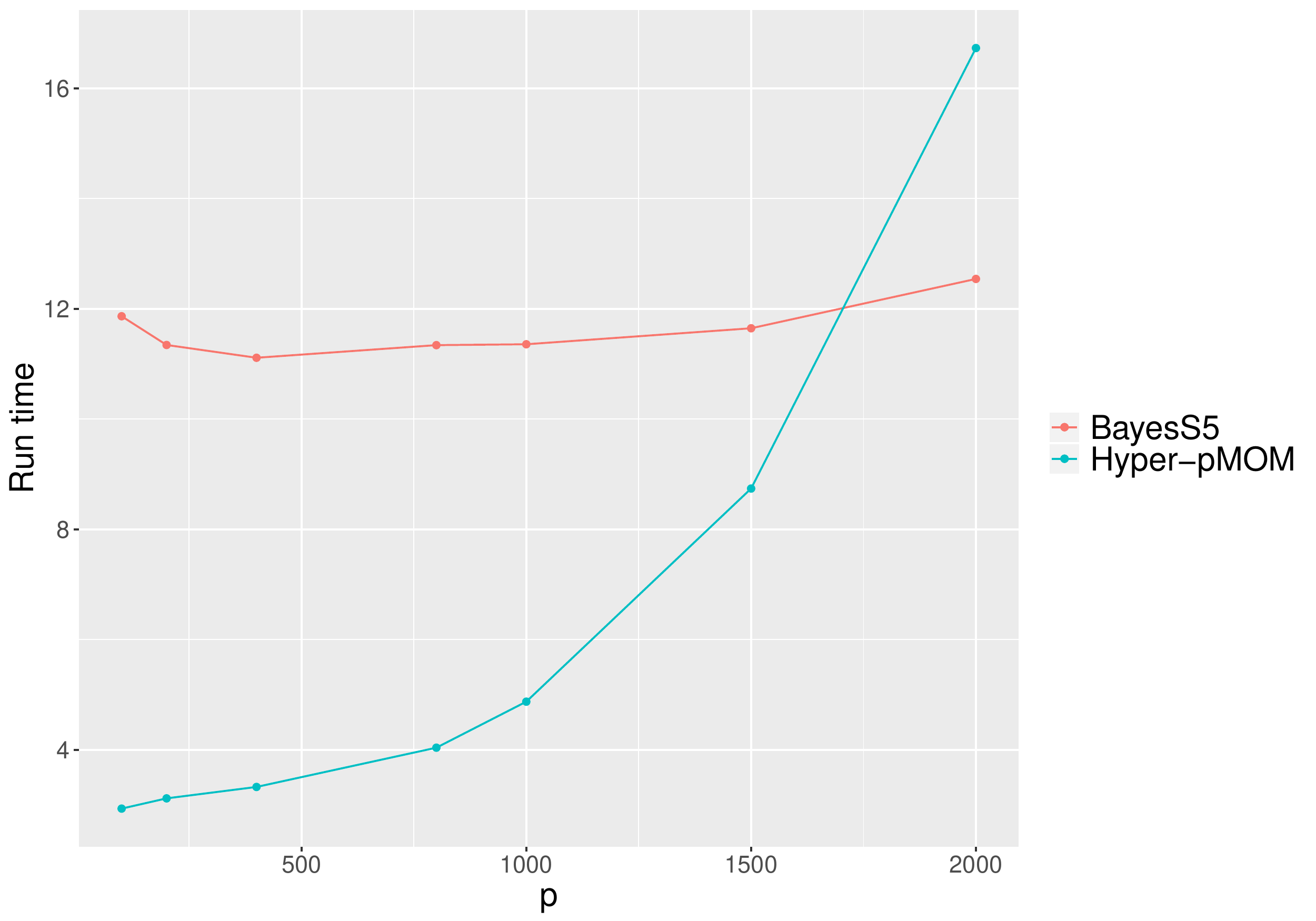}
		\caption{Run time comparison in seconds.}
\end{figure}                                                                                                            
\section{Real Data Analysis} \label{sec:real}
In this section, we carry out the real data analysis to examine the performance of proposed method based on the Boston housing dataset. The dataset contains the median value of owner-occupied homes in the Boston region as the responsive variable, together with several other possible predictor variables including the geographical characteristics. The total number of observations is $n = 506$ and 10 continuous variables: crim, indus, nox, rm, age, dis, tax, ptratio, b, and lstat are considered as the predictor variables.  Several approaches for variable selection have been demonstrated via this housing dataset. See for example \citep{yuan:lin:2005, Shin.M:2015}.

We added 1000 noise variables generated independently from a standard normal distribution to perform the model selection in a $p > n$ regression setting. The design matrix is standardized and the dataset is divided into a training set of size $406$ and a test set of size $100$. We first obtain the model estimate based on the training set and then compare the proposed hyper-pMOM approach with the following four methods on the test set: pMOM with fixed $\tau = 0.072$, peMOM with simplified shotgun stochastic search, and two frequentist approaches, Lasso and SCAD. 

The results are summarized in Table \ref{table:boston} averaged over 100 repetitions based on the following five measures also adopted in \citep{Shin.M:2015}. MSPE represents the out-of-sample square prediction error calculated by
\begin{equation*}
\mbox{MSPE} = \frac 1 {100} \sum_{i \in test}\left(y_i - X_i^T\hat {\bm \beta}_{\hat k}^{train}\right)^2,
\end{equation*}
where $\hat {\bm \beta}_{\hat k}^{train}$ is the least squared estimator based on the model estimate obtained from the test set. MS-O and MS-N refer to the average
original variables and falsely selected noise variables over 100 repetitions, respectively. FS-O is the number of original variables that are selected at least 95 out of 100 repetitions. TS-O refers to the number of original variables that are selected at least once from 100 repetitions.

As we see in Table \ref{table:boston}, our hyper-pMOM approach consistently identifies the same model and had the lowest prediction error among all the five methods. In particular, the average number of the original variables that are selected at least 95 times is 3. Across all the 100 repetitions, our hyper-pMOM method successfully avoids selecting any noise variable, while all the other four methods falsely identify at least one noise variable. Overall, the real data application illustrates our hyper-pMOM approach yields the most stable and accurate model selection among all the five methods.
\begin{table}\label{table:boston}
	\begin{tabular}{cccccc}
		\hline
	\text{  }	&MSPE  & MS-O        & MS-N       & FS-O         & TS-O    \\
		\hline
		Hyper-pMOM & 17.53 & 3   & 0     & 3    & 3 \\
		pMOM  & 21.57 & 5 & 4   & 5    & 4 \\
		peMOM & 18.08 & 5  & 1& 5    & 5 \\
		Lasso & 23.40 & 5.58& 17.81& 4    & 6 \\
		SCAD & 22.83 & 4.98& 12.89& 5    & 5\\
		\hline
	\end{tabular}
\caption{Model selection comparison based on the Boston housing data.}
\end{table}
\section{Discussion} \label{sec:discussion}
This article describes and examines theoretical properties of hyper-pMOM priors proposed in \citep{PhDthesis:Wu} for variable selection in high-dimensional linear model settings. Under standard regularity assumptions, which include the prior over all models is restricted to ones with model size less than an appropriate function of the sample size $n$, we establish posterior ratio consistency (Theorem \ref{thm1}), i.e., the ratio of the 
maximum marginal posterior probability assigned to a ``non-true" model to the posterior probability assigned to the ``true" 
model converges to zero in probability. 
Next, under the additional assumption that $p$ increases 
at a polynomial rate with $n$, we show strong model selection consistency (Theorem \ref{thm2}). Strong model selection 
consistency implies that the posterior probability of the true model converges in probability to $1$ as $n \rightarrow \infty$. 

 Based on the reviewers' comments, we realize the polynomial rate restriction on $p$ could be rather limited. By carefully examining our theoretical analysis, in Section 5, we add another result where we replace the uniform-like prior with the complexity prior on the model space to penalize larger models, and establish strong model selection consistency (Theorem \ref{thm4}) when $p$ is allowed to grow at a sub-exponential rate of $n$. However, through simulation studies, we find out that the model selection performance under the uniform-like prior is much better than that under the complexity prior, hence from a practical point of view, one would still prefer the hyper-pMOM with uniform-like prior on the model space.

In Section \ref{sec:computation}, we provide details about the application of Laplace approximation to approximate the posterior density and illustrate the potential benefits for our hyper-pMOM based model selection procedure compared with other methods via simulation studies and real data analysis in Section \ref{sec:experiments} and Section \ref{sec:real}, respectively.

\bibliographystyle{ba}
\bibliography{references}

\renewcommand{\theequation}{A.\arabic{equation}}
\setcounter{equation}{0}

\renewcommand\thesection{\Alph{section}}
\section{Proof of Lemma \ref{chisquaredtail}}
First note that 
\begin{align*}
P(\chi_p^2 - p > a) &\le \inf_{0 < t < \frac 1 2}\left[\exp\left\{-t(p+a)\right\}E\left(t\chi_p^2\right)\right]\\
&=  \inf_{0 < t < \frac 1 2}\left[\exp\left\{-t(p+a)\right\}(1-2t)^{-\frac p 2}\right].
\end{align*}
Let $g(t)  = -t(p+a) - \frac p 2\log(1-2t),$ then $g^\prime(t) = -(p+a) + \frac{p}{1-2t}$ and $g^{\prime\prime}(t) = \frac{2p}{(1-2t)^2} > 0.$ Hence, $g(t)$ is minimized at $t = t_0 = \frac a {2(p+a)} < \frac 1 2.$ Hence,
\begin{align*}
P(\chi_p^2 - p > a) &\le \inf_{0 < t < \frac 1 2}\left[\exp\left\{-t(p+a)\right\}(1-2t)^{-\frac p 2}\right]\\
&= \exp\left(-\frac a 2\right)\left(\frac p {p+a}\right)^{-\frac p 2}\\
&= \exp\left[-\frac p 2\left\{\frac a p - \log(1+\frac a p)\right\}\right].
\end{align*}
From the inequality, $\log\left(1+\frac a p\right) \le \frac a p - \frac{a^2}{2p(p+a)},$ we get,
$$P\left(\chi_p^2 - p > a\right) \le \exp\left(-\frac{a^2}{4(p+a)}\right).$$
Similarly, we get $P\left(\chi_p^2 - p < -a\right) \le \exp\left(-\frac{a^2}{4(p+a)}\right).$ Therefore,
$$P\left(\lvert\chi_p^2 - p\rvert > a\right) \le 2\exp\left(-\frac{a^2}{4(p+a)}\right).$$
Next, Note that $X \sim \chi_p^2(\lambda)$ is equivalent to $X | K = k \sim \chi_{p+2k}^2$ and $K \sim Poisson(\frac1 2 \lambda).$ Thus, $E(X) = EE(X|K) = p+\lambda$ and the moment-generating function of $X$ is given by
\begin{align*}
E\left[\exp(tX)\right] =& EE\left[\exp(tX)|K\right]\\
=& E\left[(1-2t)^{-\frac{p+2K}{2}}\right]\\
=&(1-2t)^{-\frac p 2}\exp\left[\frac {\lambda t}{2(1-2t)}\right], 0<t< \frac 1 2.
\end{align*}
Hence, for $a>0$,
\begin{align} \label{noncentralproof}
\begin{split}
&P(\chi_p^2(\lambda) - (p + \lambda) > a) \\
\le & \inf_{0 < t < \frac 1 2}\left[\exp\left\{-t(p+\lambda+a)+\frac{\lambda t}{1-2t}\right\}(1-2t)^{-\frac p 2}\right].
\end{split}
\end{align}
Let $g(t) = -t(p+\lambda+a) + \frac{\lambda t}{1-2t} - \frac p 2\log(1-2t).$ Since $g^{\prime \prime}(t) > 0$, $g(t)$ is minimized at $t = t_0 = \frac a {2(p+\lambda+a)} < \frac 1 2$ where $g^\prime(t_0) = 0.$ \\
Now, for $a>0$, by (\ref{noncentralproof}),
\begin{align*}
&P\left(\chi_p^2(\lambda) - (p + \lambda) > a\right) \\
\le & \exp\left[-t_0(p+\lambda +a) + \frac{\lambda t_0}{1-2t_0}\right](1-2t_0)^{-\frac p 2}\\
\le & \exp\left(-\frac p 2\left\{\frac a {p+\lambda} - \log\left(1+\frac a {p+\lambda}\right)\right\}\right).
\end{align*}

\section{Proof of Lemma \ref{lm1}}
It follows by the form of the non-local priors in (\ref{modelspecification}), and by Assumptions \ref{assumptiontruemodel} and \ref{assumptionhyper} that, 
\begin{align}
\left(\frac{a_1}{a_2}\right)^{\frac k 2}\left(\frac{a_1^r}{(2r-1)!!}\right)^k < d_k < \left(\frac{a_2}{a_1}\right)^{\frac k 2}\left(\frac{a_2^r}{(2r-1)!!}\right)^k
\end{align}
and
\begin{align}\label{lm1proof1}
(n\epsilon_n^{-1} + \frac{a_2}{\tau})^{\frac{k}{2}} > |C_k|^{\frac 1 2} > (n\epsilon_n)^{\frac k 2}.
\end{align}

Hence, by (\ref{marginal density}) and (\ref{lm1proof1}),
\begin{align} \label{lm1mk}
\begin{split}
&m_{\bm k}(\bm y_n)\\
<& \frac{\frac{\left(\frac n 2\right)^{\frac 1 2}}{\Gamma(\frac 1 2)}}{(\sqrt{2\pi})^n}\frac{\left( \alpha_2 \right)^{\alpha_1}}{\Gamma(\alpha_1)}\left(\frac{{a_2^{1+r}}}{\sqrt{ca_1}(2r - 1)!!}\right)^k (n\epsilon_n)^{- \frac k 2}\\
&\times \int_{0}^{\infty}\int_{0}^{\infty} (\sigma^2)^{-\left(\frac n 2 + rk + \alpha_1 + 1\right)} \exp\left\{-\frac{R_k + 2\alpha_2}{2\sigma^2}\right\} \tau^{-rk - \frac k 2 - \frac 3 2}e^{-\frac{n}{2\tau}}E_k(\prod_{i=1}^{k}\beta_{k_i}^{2r})d\sigma^2 d\tau.
\end{split}
\end{align}
When $\bm k \neq \bm t,$ by Lemma 6 in the supplementary material for \citep{Johnson:Rossell:2012}, we get 
\begin{align*}
Q_k = E_k(\prod_{i=1}^{k}\beta_{k_i}^{2r}) &< \left(\frac{\epsilon_n^{-1}+ \frac{a_2}{n\tau}}{\epsilon_n} \right)^{\frac k 2}\left(\frac{4V}{k} + \left[(2r-1)!!\right]^{\frac 1 r}\frac{4\sigma^2}{\epsilon_n n}\right)^{rk}\\
&\le  \left(\frac{1 + \frac{a_2\epsilon_n}{n\tau}}{\epsilon_n^2} \right)^{\frac k 2}2^{rk-1}\left(\left(\frac{4V}{k}\right)^{rk} + \left(\frac{4\sigma^2(2r-1)!!}{\epsilon_n n}\right)^{rk}\right).
\end{align*}
Note that $V \le \frac{1}{n\epsilon_n^5}\bm y_n^TP_u\bm y_n$ under Assumption \ref{assumptiontruemodel}, where $P_u = X_u(X_u^TX_u)^{-1}X_u^T$.\\
Therefore, from (\ref{lm1mk}), for large enough constants $M_1,M_2$, we get
\begin{align*}
&m_{\bm k}(\bm y_n) \\
<& \frac{\sqrt n}{\left(\sqrt{2\pi}\right)^n} M_1 M_2^k \left(\frac{V}{k}\right)^{rk}n^{- \frac k 2}\epsilon_n^{-\frac 3 2 k}\\
&\times \int_{0}^{\infty}\int_{0}^{\infty} (\sigma^2)^{-\left(\frac n 2 + rk + \alpha_1 + 1\right)} \exp\left\{-\frac{R_k + 2\alpha_2}{2\sigma^2}\right\} \tau^{-rk - \frac k 2 - \frac 3 2}e^{-\frac{n}{2\tau}}\left(1+\frac{a_2\epsilon_n}{n\tau}\right)^{\frac k 2}d\sigma^2 d\tau\\
&+  \frac{\sqrt n}{\left(\sqrt{2\pi}\right)^n} M_1 M_2^k n^{- \frac k 2 - rk} \epsilon_n^{-\frac 3 2 k - rk}\\
&\times \int_{0}^{\infty}\int_{0}^{\infty} (\sigma^2)^{-\left(\frac n 2  + \alpha_1 + 1\right)} \exp\left\{-\frac{R_k+ 2\alpha_2}{2\sigma^2}\right\} \tau^{- rk - \frac k 2 - \frac 3 2}e^{-\frac{n}{2\tau}}\left(1+\frac{a_2 \epsilon_n}{n\tau}\right)^{\frac k 2}d\sigma^2 d\tau.
\end{align*}
Since $1 + x \le e^x,$ integrating out both $\tau$ and $\sigma^2$ gives us
\begin{align} \label{mk2}
\begin{split}
m_{\bm k}(\bm y_n) \le& \frac{\sqrt n}{\left(\sqrt{2\pi}\right)^n} M_1 M_2^k\left(\frac{V}{k}\right)^{rk}n^{- \frac k 2}\epsilon_n^{-\frac 3 2 k}\\
&\times \frac{\Gamma\left(rk + \frac k 2 +\frac 1 2 \right)\Gamma(\frac n 2 + rk + \alpha_1)}{\left(\frac n 2  - \frac {a_2k}{2n\epsilon_n^{-1}}\right)^{rk + \frac k 2 + \frac 1 2}}2^{\frac n 2 + rk + \alpha_1}\frac {1}{\{R_k^*+ 2\alpha_2\}^{\frac n 2 + rk + \alpha_1}}\\
&+  \frac{\sqrt n}{(\left(\sqrt{2\pi}\right)^n} M_1 M_2^k n^{- \frac k 2 - rk} \epsilon_n^{-\frac 3 2 k - rk}\\
&\times \frac{\Gamma\left(rk + \frac k 2 +\frac 1 2 \right)\Gamma(\frac n 2  + \alpha_1)}{\left(\frac n 2  - \frac {a_2k}{2n\epsilon_n^{-1}}\right)^{rk + \frac k 2 + \frac 1 2}}2^{\frac n 2  + \alpha_1}\frac {1}{\{R_k^*+ 2\alpha_2\}^{\frac n 2  + \alpha_1}}.
\end{split}
\end{align}

\noindent
Similarly for the true model $\bm t$, using (\ref{marginal density}) and (\ref{lm1proof1}), $1 + x \le e^x$, and first integrating out 
$\sigma^2$, we get 
\begin{align} \label{mt}
\begin{split}
m_{\bm t}(\bm y_n) >&\frac{\sqrt n}{\left(\sqrt{2\pi}\right)^n} M_1 M_2^t (n\epsilon_n)^{-\frac t 2} \\
&\times \int_{0}^{\infty}\int_{0}^{\infty}(\sigma^2)^{-\left(\frac n 2 + rt + \alpha_1 + 1\right)} \exp\left\{-\frac{R_t + 2\alpha_2}{2\sigma^2}\right\} \tau^{-rt - \frac t 2 - \frac 3 2}e^{-\left(\frac{n}{2} + \frac{a_2t}{2n\epsilon_n^{-1}} \right) \frac1 \tau }d\sigma^2 d\tau\\
\ge&\frac{\sqrt n}{\left(\sqrt{2\pi}\right)^n} M_1 M_2^t (n\epsilon_n)^{-\frac t 2}\Gamma(\frac n 2 + rt + \alpha_1)2^{\frac n 2 + rt + \alpha_1} \\
&\times \int_{0}^{\infty} \frac 1{\{R_t + 2\alpha_2\}^{\frac n 2 + rt + \alpha_1}} \tau^{-rt - \frac t 2 - \frac 3 2}e^{-\left(\frac{n}{2} + \frac{a_2t}{2n\epsilon_n^{-1}} \right) \frac 1 \tau } d\tau.\\
\end{split}
\end{align}
Note that
\begin{align*}
\{R_t + 2\alpha_2\}^{\frac n 2 + rt + \alpha_1}\le& \{R_t^* + 2\alpha_2\}^{\frac n 2 + rt + \alpha_1}\left\{1 + \frac{R_t - R_t^* }{R_t^*}\right\}^{\frac n 2 + rt + \alpha_1},
\end{align*}
and, by Assumptions \ref{assumptiontruemodel} and \ref{assumptionhyper},
\begin{align*}
&\frac{R_t - R_t^* }{R_t^*}\\
\le& \frac 1{R_t^*}\bm y_n^TX_t(X_t^TX_t)^{-\frac 1 2}\left(I - \left(I + \frac{(X_t^TX_t)^{-\frac 1 2}A_t(X_t^TX_t)^{-\frac 1 2}}{\tau}\right)^{-1}\right)(X_t^TX_t)^{-\frac 1 2}X_t^T\bm y_n\\
\le&  \frac{\frac{a_2}{n\epsilon_n}}{\frac{a_2}{n\epsilon_n} + \tau}\frac{\bm y_n^TX_t(X_t^TX_t)^{-1}X_t^T\bm y_n}{R_t^*}.
\end{align*}

\noindent
From (\ref{mt}), we have
\begin{align} \label{mt1}
\begin{split}
m_{\bm t}(\bm y_n) >& \frac{\sqrt n}{\left(\sqrt{2\pi}\right)^n} M_1 M_2^t  (n\epsilon_n)^{-\frac t 2}\Gamma(\frac n 2 + rt + \alpha_1) \frac {2^{\frac n 2 + rt + \alpha_1}}{\{R_t^* + 2\alpha_2\}^{\frac n 2 + rt + \alpha_1}}\\
&\times \int_{0}^{\infty} \exp \left[ - \left(\frac n 2 + rt + \alpha_1\right)\frac{\frac{a_2}{n\epsilon_n}}{\frac{a_2}{n\epsilon_n} + \tau}\frac{\bm y_n^TP_t\bm y_n}{R_t^*}\right]\tau^{-rt - \frac t 2 - \frac 3 2}e^{-\left(\frac{n}{2} + \frac{a_2t}{2n\epsilon_n^{-1}} \right) \frac 1 \tau } d\tau.\\	
\end{split}
\end{align}
Note that $P_t \le P_u$. By (\ref{chisquaredRt}) and (\ref{chisquaredPk}), there exists $N_1$, such that for all $n>N_1$ (not depending on the model),
\begin{align} \label{lm1proofmt1}
\left(\frac n 2 + rt + \alpha_1\right)\frac{\frac{a_2}{n\epsilon_n}}{\frac{a_2}{n\epsilon_n} + \tau}\frac{\bm y_n^TP_t\bm y_n}{R_t^*} \le& \frac{\frac {a_2\sigma_0^2}{\epsilon_n}\log n}{\tau} \frac{\frac n 2 +rt + \alpha_1}{\sigma_0^2(n - t - \sqrt{n-t}\log n)} < \frac {a_2}{\epsilon_n}\frac {\log n} {\tau}.
\end{align}
Using (\ref{mt1}) and (\ref{lm1proofmt1}), integrating out $\tau,$ we get
\begin{align} \label{mt2}
\begin{split}
m_{\bm t}(\bm y_n) >& \frac{\sqrt n}{\left(\sqrt{2\pi}\right)^n} M_1 M_2^t  (n\epsilon_n)^{-\frac t 2}\Gamma(\frac n 2 + rt + \alpha_1) \frac 1{\{R_t^* + 2\alpha_2\}^{\frac n 2 + rt + \alpha_1}}2^{\frac n 2 + rt + \alpha_1}\\
&\times \int_{0}^{\infty}\tau^{-rt - \frac t 2 - \frac 3 2}e^{-\left(\frac{n}{2} + \frac {a_2\log n}{\epsilon_n} + \frac{a_2t}{2n\epsilon_n^{-1}} \right) \frac 1 \tau } d\tau\\
\ge& \frac{\sqrt n}{\left(\sqrt{2\pi}\right)^n} M_1 M_2^t (n\epsilon_n)^{-\frac t 2}\\
&\times  \frac{\Gamma\left(rt + \frac t 2 +\frac 1 2 \right)\Gamma(\frac n 2 + rt + \alpha_1)}{\left(\frac n 2 + \frac {a_2\log n}{\epsilon_n} + \frac {a_2t}{2n\epsilon_n^{-1}}\right)^{rt + \frac t 2 + \frac 1 2}}2^{\frac n 2 + rt + \alpha_1}\frac {1}{\{R_t^* + 2\alpha_2\}^{\frac n 2 + rt + \alpha_1}}.
\end{split}
\end{align}
It follows from (\ref{mk2}) and Assumption \ref{assumptiontruemodel} that, the upper bound for $m_{\bm t}(\bm y_n)$ will be given by,
\begin{align}  \label{lm1mkt} 
\begin{split}
\frac{m_{\bm k}(\bm y_n)}{m_{\bm t}(\bm y_n)} <& B_1{A_1}^k\left(\frac{V}{k\epsilon_n^2}\right)^{rk}n^{- \frac 1 2 (k-t)}\frac{\Gamma\left(rk + \frac k 2 +\frac 1 2 \right)\Gamma(\frac n 2 + rk + \alpha_1)}{\Gamma(\frac n 2 + rt + \alpha_1)}\\
&\times\frac{\left(\frac n 2 + \frac {a_2\log n}{c} + \frac {a_2\epsilon_nt}{2n}\right)^{rt + \frac t 2 + \frac 1 2}}{\left(\frac n 2  - \frac {a_2\epsilon_nk}{2n}\right)^{rk + \frac k 2 + \frac 1 2}} \frac{\{R_t^* + 2\alpha_2\}^{\frac n 2 + rt + \alpha_1}}{\{R_k^* + 2\alpha_2\}^{\frac n 2 + rk + \alpha_1}}\\
&+ B_1{A_1}^kn^{- \frac 1 2(k - t) - \frac 3 4 rk} \frac{\Gamma\left(rk + \frac k 2 +\frac 1 2 \right)\Gamma(\frac n 2 + \alpha_1)}{\Gamma(\frac n 2 + rt + \alpha_1)}\\
&\times \frac{\left(\frac n 2 + \frac {a_2\log n}{c} + \frac {a_2\epsilon_n t}{2n}\right)^{rt + \frac t 2 + \frac 1 2}}{\left(\frac n 2  - \frac {a_2\epsilon_n k}{2n}\right)^{rk + \frac k 2 + \frac 1 2}}\frac{\{R_t^* + 2\alpha_2\}^{\frac n 2 + rt + \alpha_1}}{\{R_k^* + 2\alpha_2\}^{\frac n 2 + \alpha_1}},
\end{split}
\end{align}
where $A_1$ and $B_1$ are large enough constants.\\
\noindent
Using Stirling approximation, there exists $N_2$ (not depending on $k$), such that for all $n>N_2$ and a large enough constant $M^\prime$, we have
\begin{equation*}
\frac{\Gamma\left(rk + \frac k 2 +\frac 1 2 \right)\Gamma(\frac n 2 + rk + \alpha_1)}{\Gamma(\frac n 2 + rt + \alpha_1)} \le M^\prime k^{(r+1)k}n^{r(k - t)},
\end{equation*}
and 
\begin{equation*}
\frac{\Gamma\left(rk + \frac k 2 +\frac 1 2 \right)\Gamma(\frac n 2  + \alpha_1)}{\Gamma(\frac n 2 + rt + \alpha_1)} \le M^\prime k^{(r+1)k}n^{-rt}.
\end{equation*}
By (\ref{lm1mkt}), there exists $N = \max\{N_1,N_2\},$ such that for $n > N$ (not depending on $k$), we get
\begin{align} \label{mkt2}
\begin{split}
\frac{m_{\bm k}(\bm y_n)}{m_{\bm t}(\bm y_n)} <& BA^k\left(\frac V{\epsilon_n^2}\right)^{rk}k^{k}n^{-(k-t)}\frac{\{R_t^* + 2\alpha_2\}^{\frac n 2 + rt + \alpha_1}}{\{R_k^* + 2\alpha_2\}^{\frac n 2 + rk + \alpha_1}}\\
&+ BA^kk^{(r+1)k}n^{-(r+1)(k-t)-\frac 3 4 rk - rt} \frac{\{R_t^* + 2\alpha_2\}^{\frac n 2 + rt + \alpha_1}}{\{R_k^* + 2\alpha_2\}^{\frac n 2 + \alpha_1}},
\end{split}
\end{align}
where $A = A_1 \times M^\prime$ and $B = B_1 \times M^\prime$.

\section{Proof of Lemma \ref{lm3}}
In order to show the upper bound converges to $0$, we need the following lemma in our proof. Let $\delta = \frac 1 2 \min_{j \in \bm t}|\beta_{0,j}|$. 
\begin{lemma} \label{lm4}
	When $\bm k \nsupseteq \bm t$ and $\bm u = \bm k \cup \bm t$, $\frac{R_k^* - R_u^*}{\sigma_0^2} \sim \chi_{u-k}^2(\lambda),$ where $\lambda = \frac{\bm \beta_0^T\left(X_u^TP_*X_u\right)\bm \beta_0}{\sigma_0^2} > \frac{n\epsilon_n\delta^2}{\sigma_0^2}$ and $P_*$ is idempotent. 
\end{lemma}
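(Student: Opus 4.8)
The plan is to recognize $R_k^* - R_u^*$ as a quadratic form in the Gaussian vector $\bm y_n$ built from a symmetric idempotent matrix, and then apply the standard distribution theory for such forms, reserving the real work for the lower bound on the noncentrality parameter.

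First I would set $P_* = P_u - P_k$ and record the algebra. Because $\bm k \subseteq \bm u$, the column space of $X_k$ lies inside that of $X_u$, so $P_u P_k = P_k P_u = P_k$. Hence $P_*$ is symmetric and idempotent, since $P_*^2 = P_u - 2P_k + P_k = P_*$, and $\mathrm{tr}(P_*) = \mathrm{tr}(P_u) - \mathrm{tr}(P_k) = u - k$ (Assumption \ref{assumptiontruemodel} forces $X_k$ and $X_u$ to have full column rank, so these traces equal $k$ and $u$). Writing $R_k^* - R_u^* = \bm y_n^T(I - P_k)\bm y_n - \bm y_n^T(I - P_u)\bm y_n = \bm y_n^T P_* \bm y_n$ then displays the target as the quadratic form of an idempotent matrix of rank $u-k$.

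Next, since $\bm y_n \sim N(\mu, \sigma_0^2 I_n)$ with $\mu = X_t \bm\beta_{0,t} = X_u \bm\beta_{0,u}$ (the mean lies in the span of $X_u$ because $\bm t \subseteq \bm u$), the classical result on quadratic forms of normal vectors yields $\bm y_n^T P_* \bm y_n / \sigma_0^2 \sim \chi^2_{u-k}(\lambda)$ with $\lambda = \mu^T P_* \mu / \sigma_0^2 = \bm\beta_0^T(X_u^T P_* X_u)\bm\beta_0 / \sigma_0^2$ (with $\bm\beta_0$ restricted to the $\bm u$-block), which is precisely the asserted noncentrality parameter, and $P_*$ is idempotent as shown above.

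The main work, and the step I expect to be delicate, is the lower bound $\lambda > n\epsilon_n\delta^2/\sigma_0^2$. Here I would exploit $P_u\mu = \mu$ to rewrite $\mu^T P_*\mu = \mu^T(I - P_k)\mu = \min_{\bm\gamma}\|\mu - X_k\bm\gamma\|^2$, the squared residual of regressing $\mu$ onto $X_k$. Each competitor $\mu - X_k\bm\gamma$ equals $X_u\bm\eta$ for a $u$-vector $\bm\eta$ whose coordinates on $\bm t \setminus \bm k$ are pinned at $\beta_{0,\ell}$ and cannot be cancelled by any choice of $\bm\gamma$; since $\bm k \nsupseteq \bm t$, this index set is nonempty. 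Thus $\|\bm\eta\|^2 \ge \sum_{\ell \in \bm t \setminus \bm k}\beta_{0,\ell}^2 \ge (2\delta)^2$, and Assumption \ref{assumptiontruemodel} gives $\|X_u\bm\eta\|^2 = \bm\eta^T X_u^T X_u \bm\eta \ge n\epsilon_n\|\bm\eta\|^2$. Taking the minimum over $\bm\gamma$ preserves the bound, so $\mu^T P_*\mu \ge 4 n\epsilon_n\delta^2 > n\epsilon_n\delta^2$, which is the claim. The subtlety is that one cannot bound $\lambda$ directly by the smallest eigenvalue of $X_u^T P_* X_u$, which is singular of rank $u-k$; the variational reformulation is what isolates the missing true coordinate as the driver of the bound.
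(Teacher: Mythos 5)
Your proof is correct, and it reaches the lemma by a partly different route than the paper. The first half coincides: the paper also sets $P_* = P_u - P_k$, verifies idempotency by a direct computation (writing $X_k = X_u W$ with $W^T = (I_k, 0)^T$ rather than your cleaner observation that $P_uP_k = P_kP_u = P_k$), and invokes the standard noncentral chi-squared distribution of the quadratic form with $\lambda = \bm\beta_0^T(X_u^TP_*X_u)\bm\beta_0/\sigma_0^2$. Where you diverge is the lower bound on $\lambda$. The paper computes $X_u^TP_*X_u$ explicitly as a block matrix whose only nonzero block is $X_{t\cap k^c}^T(I_n - P_k)X_{t\cap k^c}$, identifies this block as a Schur complement inside $X_u^TX_u$, and bounds its smallest eigenvalue below by $n\epsilon_n$ via the interlacing property of Schur complements of positive definite matrices; the bound then follows because $\|\bm\beta_{0,t\cap k^c}\|^2 \ge 4\delta^2$ as $\bm t\cap\bm k^c$ is nonempty. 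You instead use the variational identity $\mu^TP_*\mu = \mu^T(I-P_k)\mu = \min_{\bm\gamma}\|\mu - X_k\bm\gamma\|^2$ and observe that every competitor $X_u\bm\eta$ has its coordinates on $\bm t\setminus\bm k$ pinned at the true values, so $\|\bm\eta\|^2 \ge 4\delta^2$ and Assumption~\ref{assumptiontruemodel} applied to $X_u^TX_u$ finishes the job. The two arguments are mathematically equivalent (the variational characterization of the residual quadratic form is exactly the variational characterization of the Schur complement), but yours is arguably more transparent: it makes explicit why the missing true coordinate drives the bound, and it sidesteps the slightly awkward step in the paper where the eigenvalue bound must implicitly be read as applying to the nonzero block of $X_u^TP_*X_u$ (whose literal smallest eigenvalue is zero), a subtlety you correctly flag. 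Both arguments rely on $X_u$ having full column rank so that $P_u$ is well defined and the relevant smallest eigenvalue is $\ge n\epsilon_n$; this holds for large $n$ since $u \le q_n + t = o(n)$ under Assumption~\ref{assumptionmodelsize}.
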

\begin{proof}
	First write $X_u = \left[X_k,X_{t\cap k^c}\right]$ and $X_k = X_uW,$ where $W^T = \left(I_k,0_{k \times (u-k)}\right)^T.$ We let 
	\begin{align*}
	P_* = P_u - P_k &= X_u(X_u^TX_u)^{-1}X_u^T - X_k(X_k^TX_k)^{-1}X_k^T\\
	& = X_u(X_u^TX_u)^{-1}X_u^T - X_uW(W^TX_u^TX_uW)^{-1}W^TX_u^T.
	\end{align*} 
	Now further note that
	\begin{align*}
	P_*^2 =& \left[X_u(X_u^TX_u)^{-1}X_u^T - X_uW(W^TX_u^TX_uW)^{-1}W^TX_u^T\right]^2\\
	=& X_u(X_u^TX_u)^{-1}X_u^T - X_uW(W^TX_u^TX_uW)^{-1}W^TX_u^T \\
	&- X_uW(W^TX_u^TX_uW)^{-1}W^TX_u^T + X_uW(W^TX_u^TX_uW)^{-1}W^TX_u^T = P_*.
	\end{align*}
	Hence, $P_*$ is idempotent and $\frac{R_k^* - R_u^*}{\sigma_0^2} = \frac{\bm y_n^TP_*\bm y_n}{\sigma_0^2} \sim \chi^2_d(\lambda),$ where $d = rank(P_*) = rank(P_u) - rank(P_k) = u-k$ and $\lambda =  \frac{\bm \beta_0^T\left(X_u^TP_*X_u\right)\bm \beta_0}{\sigma_0^2}.$
	\\
	\noindent
	Next, note that 
	\begin{align*}
	P_*X_u = P_uX_u - P_kX_u = X_u - \left[X_k,P_kX_{t \cap k^c}\right] = \left[0_{n\times k}, \left(I_n -P_k\right)X_{t \cap k^c}\right].
	\end{align*}
	Therefore, $$X_u^TP_*X_u = \left(P_*X_u\right)^TP_*X_u  = \left[ \begin{matrix}
	0_{k \times k} & 0_{k \times t \cap k^c}\\
	0_{t \cap k^c \times k} & X_{t \cap k^c}^T\left(I_n -P_k\right)X_{t \cap k^c}
	\end{matrix}\right].$$
	Since $X_{t \cap k^c}^T\left(I_n -P_k\right)X_{t \cap k^c}$ is the Schur complement of $X_k^TX_k$, we have $$eig_1\left(X_u^TP_*X_u\right) > eig_1\left(X_k^TX_k\right) > n\epsilon_n.$$
	From the assumptions of our theorem, we have that the non-centrality parameter $$\lambda = \frac{\bm \beta_0^T\left(X_u^TP_*X_u\right)\bm \beta_0}{\sigma_0^2} > \frac{n\epsilon_n\delta^2}{\sigma_0^2}.$$
\end{proof} 
Let $Z$ be a standard normal distribution. By Lemma \ref{lm4}, it follows from the relation between noncentral chi-squared and normal distribution that, 
\begin{align} \label{finitesizeratio}
P\left(\exp\left[-\frac{\left(R_k^* - R_u^*\right)}{6\sigma_0^2}\right] > n^{-2rt}\right) \le &P\left(\frac{R_k^* - R_u^*}{\sigma_0^2} < 12rt\log n\right) \nonumber\\
< &P\left((Z-\sqrt{\lambda})^2 < 12rt\log n\right) \nonumber\\
< &P\left(Z > \sqrt{\lambda} - \sqrt{12rt\log n}\right) \nonumber\\
< &e^{- \frac{n\epsilon_n\delta^2}{4\sigma_0^2}} \rightarrow 0, \mbox{ as } n \rightarrow \infty.
\end{align}
Recall that $\bm u = \bm k \cup \bm t$ is a supset of $\bm t$. Therefore, $R_u^* \le R_k^*$. By (\ref{chisquaredPk}) and Assumption \ref{assumptiontruemodel}, for large enough $n$, say $n > N_1$ (not depending on $k$), we obtain,
\begin{equation} \label{Vbound}
\frac V{\epsilon_n^2} = \frac{1}{\epsilon_n^6}\hat \beta_{u}^T \hat \beta_{u}  \le \frac{1}{n\epsilon_n^7}\bm y^TP_u\bm y < \frac{1}{\epsilon_n^7}\log n < n^{\frac{1-\xi}{2r}}.
\end{equation}
Consider the first term of (\ref{mkt2}). When $\max\left\{\frac{4(r+1)}rt,\frac 4 {(1-\xi)^2}\right\} \le k < n^{\xi}$, we have $(r+1)t \le \frac 1 4 rk$ and $\sqrt{u-t} \le \sqrt k \le \frac{1-\xi}{2}k$. For large enough $n > N_1,$ using (\ref{chisquaredRk}), (\ref{chisquaredRk-t}) and Assumption \ref{assumptiontruemodel}, we have
\begin{align} \label{lm3proof1}
\begin{split}
& BA^k\left(\frac V{\epsilon_n^2}\right)^{rk}k^{k}n^{-(k-t)}\frac{\{R_t^* + 2\alpha_2\}^{\frac n 2 + rt + \alpha_1}}{\{R_k^* + 2\alpha_2\}^{\frac n 2 + rk + \alpha_1}}\\
\le & BA^k\left(\frac V{\epsilon_n^2}\right)^{rk}k^{k}n^{-(k-t)}\frac{\{R_t^*+2\alpha_2\}^{\frac n 2 + rt + \alpha_1}}{\{R_u^*+2\alpha_2\}^{\frac n 2 + rk + \alpha_1}}\\
\le &BA^kn^{\frac{1-\xi}{2} k}n^{\xi k}n^{-(k-t)}\left(1 + \frac{\frac{R_t^* - R_u^*}{\sigma_0^2}}{\frac{R_u^*}{\sigma_0^2}}\right)^{\frac n 2 + rt + \alpha_1}\frac1{{\left(R_u^*+2\alpha_2\right)}^{r(k - t)}}.
\end{split}
\end{align}
It follows from $1+x \le e^x$ and Assumption \ref{assumptiontruemodel} that
\begin{align*}
& BA^k\left(\frac V{\epsilon_n^2}\right)^{rk}k^{k}n^{-(k-t)}\frac{\{R_t^* + 2\alpha_2\}^{\frac n 2 + rt + \alpha_1}}{\{R_k^* + 2\alpha_2\}^{\frac n 2 + rk + \alpha_1}}\\
\le &K_1L_1^kn^{\frac{1-\xi}{2} k}n^{-(k-t)}n^{\xi k}\exp\left\{\frac{(u - t) + \sqrt{u - t}\log n}{n - u - \sqrt{n-u}\log n}\left(\frac n 2 + rt + \alpha_1\right)\right\}\\
&\times \left(\frac{\sigma_0^2}{n - u - \sqrt{n-u}\log n+2\alpha_2}\right)^{r(k - t)}\\
\le &K_1L_1^kn^{\frac{1-\xi}{2} k}n^{-(k-t)}n^{\xi k}n^{\sqrt{u-t}}n^{-r(k-t)}\\
\le &K_1{L_1}^kn^{-\frac 3 4 rk},
\end{align*}
where $K_1$ and $L_1$ are constants.\\
\noindent
When $k<\max\left\{\frac{4(r+1)}rt,\frac 4 {(1-\xi)^2}\right\}$, it follows by the boundedness of $k$, (\ref{chisquaredRt}), (\ref{chisquaredRk}), (\ref{chisquaredRk-t}) and (\ref{Vbound}) that the first part of (\ref{mkt2}) will be bounded by 
\begin{align} \label{lm3proof2}
\begin{split}
&BA^k\left(\frac V{\epsilon_n^2}\right)^{rk}k^{k}n^{-(k-t)}\frac{\{R_t^* + 2\alpha_2\}^{\frac n 2 + rt + \alpha_1}}{\{R_k^* + 2\alpha_2\}^{\frac n 2 + rk + \alpha_1}}\\
\le& BA^k\left(\frac V{\epsilon_n^2}\right)^{rk}k^{k}n^{-(k-t)}\frac{\{R_t^*+2\alpha_2\}^{\frac n 2 + rt + \alpha_1}}{\{R_u^*+2\alpha_2\}^{\frac n 2 + rk + \alpha_1}}\frac{\{R_u^*+2\alpha_2\}^{\frac n 2 + rk + \alpha_1}}{\{R_k^*+2\alpha_2\}^{\frac n 2 + rk + \alpha_1}}\\
\le & BA^k\left(\frac V{\epsilon_n^2}\right)^{rk}k^{k}n^{-(k-t)}\left(1 + \frac{\frac{R_t^* - R_u^*}{\sigma_0^2}}{\frac{R_u^*}{\sigma_0^2}}\right)^{\frac n 2 + rk + \alpha_1}{\left(R_t^*+2\alpha_2\right)}^{r(t - k)}\left(1 - \frac{\frac{R_k^* - R_u^*}{\sigma_0^2}}{\frac{R_k^* + 2\alpha_2}{\sigma_0^2}}\right)^{\frac n 2 + rk + \alpha_1}\\
\le&BA^k(\log n)^{2rk}n^{-(k-t)} \exp\left\{\frac{(u - t) + \sqrt{u-t}\log n}{n - u - \sqrt{n-u}\log n}\left(\frac n 2 + rk + \alpha_1\right)\right\} \\
&\times \left({n - t - \sqrt{n-t}\log n + 2\alpha_2}\right)^{r(t - k)} \left(1 - \frac{\frac{R_k^* - R_u^*}{\sigma_0^2}}{\frac{R_k^* + 2\alpha_2}{\sigma_0^2}}\right)^{\frac n 2 + rk + \alpha_1}.
\end{split}
\end{align}
By $1+x \le e^x$ and $1-x \le e^{-x}$, we get
\begin{align*}
& BA^k\left(\frac V{\epsilon_n^2}\right)^{rk}k^{k}n^{-(k-t)}\frac{\{R_t^* + 2\alpha_2\}^{\frac n 2 + rt + \alpha_1}}{\{R_k^* + 2\alpha_2\}^{\frac n 2 + rk + \alpha_1}}\\
\le &BA^k(\log n)^{2rk}n^{-(k-t)}n^{\sqrt{u-t}-r(k-t)}\\
&\times \exp\left\{-\frac{\frac{R_k^* - R_u^*}{\sigma_0^2}}{n - k + \sqrt{n-k}\log n + 2\alpha_2}\left(\frac n 2 + rk + \alpha_1\right)\right\}\\
\le & K_2L_2^k(\log n)^{2rk}n^{k - (r+1) (k-t)}\exp\left\{-\frac{R_k^* - R_u^*}{6\sigma_0^2}\right\}\\
< &K_2 L_2^k(\log n)^{2rk}n^{-rk}n^{(r+1)t}n^{-2rt}\\
< & K_2 L_2^k n^{-\frac 3 4 rk},
\end{align*}
for large enough constants $K_2$, $L_2$, since $(\log n)^{2rk}  < n^{\frac14 rk}$ for large enough $n$, say $n > N_3$ (not depending on $k$), and $\sqrt{u - t} \le \sqrt k \le k$.\\
Now consider the second term of (\ref{mkt2}). When $\max\left\{\frac {\frac 5 4 r + 1}{1-\xi}t,\frac 1 {(1-\xi)^2}\right\} \le k \le n^{\xi}$, we have $\sqrt{u-t} \le \sqrt{k} \le (1-\xi)k$ and $(\frac 5 4 r + 1)t \le (1-\xi)rk$. By (\ref{chisquaredRk}), (\ref{chisquaredRk-t}) and Assumption \ref{assumptionmodelsize}, for large enough $n,$ say $n>N_4$ (not depending on $k$), we have 
\begin{align} \label{lm3proof3}
\begin{split}
&BA^kk^{(r+1)k}n^{-(r+1)(k-t)-\frac 3 4 rk - rt}\frac{\{R_t^* + 2\alpha_2\}^{\frac n 2 + rt + \alpha_1}}{\{R_k^* + 2\alpha_2\}^{\frac n 2 + \alpha_1}}\\
\le & BA^kk^{(r+1)k}n^{-(r+1)(k-t)-\frac 3 4 rk - rt}\frac{\{R_t^* + 2\alpha_2\}^{\frac n 2 + rt + \alpha_1}}{\{R_u^* + 2\alpha_2\}^{\frac n 2 + \alpha_1}}\\
\le & BA^kn^{\xi(r+1)k}n^{-(r+1)(k-t)-\frac 3 4 rk - rt}\left(1 + \frac{\frac{R_t^* - R_u^*}{\sigma_0^2}}{\frac{R_u^*}{\sigma_0^2}}\right)^{\frac n 2 + rt + \alpha_1}{\left(R_u^*+2\alpha_2\right)}^{rt}.
\end{split}
\end{align}
Similar to (\ref{lm3proof1}), we obtain,
\begin{align*}
&BA^kk^{(r+1)k}n^{-(r+1)(k-t)-\frac 3 4 rk - rt}\frac{\{R_t^* + 2\alpha_2\}^{\frac n 2 + rt + \alpha_1}}{\{R_k^* + 2\alpha_2\}^{\frac n 2 + \alpha_1}}\\
\le & BA^kn^{\xi(r+1)k}n^{-(r+1)(k-t)-\frac 3 4 rk - rt}\left({n - u - \sqrt{n-u}\log n + 2\alpha_2}\right)^{rt}\\
&\times \exp\left\{\frac{(u - t) + \sqrt{u-t}\log n}{n - u - \sqrt{n-u}\log n}\left(\frac n 2 + rt + \alpha_1\right)\right\} \\
\le & K_3L_3^kn^{\xi(r+1)k}n^{-(r+1)(k-t)-\frac 3 4 rk - rt}n^{\sqrt{u-t}+rt}\\
\le & K_3L_3^kn^{-\frac 3 4 rk},
\end{align*}
where $K_3$ and $L_3$ are large enough constants.\\
When $k < \max\left\{\frac {\frac 5 4 r + 1}{1-\xi}t,\frac 1 {(1-\xi)^2}\right\},$ using (\ref{finitesizeratio}) and argument similar to (\ref{lm3proof2}), we have
\begin{align*}
&BA^kk^{(r+1)k}n^{-(r+1)(k-t)-\frac 3 4 rk - rt}\frac{\{R_t^* + 2\alpha_2\}^{\frac n 2 + rt + \alpha_1}}{\{R_k^* + 2\alpha_2\}^{\frac n 2 + \alpha_1}}\\
\le &BA^kn^{-(r+1)(k-t)-\frac 3 4 rk - rt}\frac{\{R_t^*+2\alpha_2\}^{\frac n 2 + rt + \alpha_1}}{\{R_u^*+2\alpha_2\}^{\frac n 2 + \alpha_1}}\frac{\{R_u^*+2\alpha_2\}^{\frac n 2 + \alpha_1}}{\{R_k^*+2\alpha_2\}^{\frac n 2 + \alpha_1}}\\
\le &BA^kn^{-(r+1)(k-t)-\frac 3 4 rk - rt}\left(1 + \frac{\frac{R_t^* - R_u^*}{\sigma_0^2}}{\frac{R_u^*}{\sigma_0^2}}\right)^{\frac n 2  + \alpha_1}{\left(R_t^*+2\alpha_2\right)}^{rt}\\
&\times \left(1 - \frac{\frac{R_k^* - R_u^*}{\sigma_0^2}}{\frac{R_k^* + 2\alpha_2}{\sigma_0^2}}\right)^{\frac n 2 + \alpha_1}.
\end{align*} 
By $1+x \le e^x$, we get,
\begin{align*}
&BA^kk^{(r+1)k}n^{-(r+1)(k-t)-\frac 3 4 rk - rt}\frac{\{R_t^* + 2\alpha_2\}^{\frac n 2 + rt + \alpha_1}}{\{R_k^* + 2\alpha_2\}^{\frac n 2 + \alpha_1}}\\
\le &BA^kn^{-(r+1)(k-t)-\frac 3 4 rk - rt}\exp\left\{\frac{(u - t) + \sqrt{u-t}\log n}{n - u - \sqrt{n-u}\log n}\left(\frac n 2 + \alpha_1\right)\right\} \\
&\times \left({n - t - \sqrt{n-t}\log n + 2\alpha_2}\right)^{rt}\exp\left\{-\frac{\frac{R_k^* - R_u^*}{\sigma_0^2}}{n - k + \sqrt{n-k}\log n + 2\alpha_2}\left(\frac n 2 + \alpha_1\right)\right\}\\
\le & K_4L_4^kn^{-(r+1)(k-t)-\frac 3 4 rk - rt}n^{\sqrt{u-t}}n^{rt}n^{-2rt}\\
\le & K_4L_4^kn^{-\frac 3 4 rk}.
\end{align*}
where $K_4$ and $L_4$ are large enough constants, for large enough $n$, say $n >  N_6$ (not depending on $k$).

Combining all cases and letting $N^\prime = \max\{N, N_1,N_2,N_3,N_4,N_5,N_6\}$ (not depending on $k$), we get that
\begin{align}
\frac{m_{\bm k}(\bm y_n)}{m_{\bm t}(\bm y_n)} < K^{\prime}(L^{\prime})^kn^{-\frac 3 4 rk},
\end{align}	
where $K^\prime = \max\{K_1,K_2,K_3,K_4\}$ and $L^\prime = \max\{L_1,L_2,L_3,L_4\}$ for all $n > N^\prime$.

\section{Proof of Lemma \ref{lm5}}
Similar to (\ref{Vbound}), for large enough $n$, say $n > N_1^\prime$ (not depending on $k$), we have $V < n^{\frac{1-\xi}{2r}}$. Next,
consider the first term in (\ref{mkt2}). When $\max\left\{\frac{2(1+\xi)+4r}r t, t + \frac 4 {(1-\xi)^2}\right\} \le k \le n^\xi,$ we obtain $\frac 1 4 rk \ge \frac{1+\xi}2t + rt$ and $\sqrt{k - t} \le \frac{1-\xi}2 (k - t)$. For large enough $n$, say $n > N_2^\prime$ (not depending on $k$), when $\bm k \supset \bm t,$ using (\ref{chisquaredRk}), (\ref{chisquaredRk-t}) and Assumption \ref{assumptionmodelsize}, we get
\begin{align} \label{lm5proof1}
\begin{split}
& BA^k\left(\frac V{\epsilon_n^2}\right)^{rk}k^{k}n^{-(k-t)}\frac{\{R_t^* + 2\alpha_2\}^{\frac n 2 + rt + \alpha_1}}{\{R_k^* + 2\alpha_2\}^{\frac n 2 + rk + \alpha_1}}\\
\le &BA^kn^{\frac{1-\xi}{2} k}n^{\xi k}n^{-(k-t)}\left(1 + \frac{\frac{R_t^* - R_k^*}{\sigma_0^2}}{\frac{R_k^*}{\sigma_0^2}}\right)^{\frac n 2 + rt + \alpha_1}\frac1{{\left(R_k^*+2\alpha_2\right)}^{r(k - t)}}\\
\le &S_1T_1^kn^{\frac{1-\xi}{2} k}n^{-(k-t)}n^{\xi k}\exp\left\{\frac{(k - t) + \sqrt{k - t}\log n}{n - k - \sqrt{n-k}\log n}\left(\frac n 2 + rt + \alpha_1\right)\right\}\\
&\times \left(\frac{\sigma_0^2}{n - k - \sqrt{n-u}\log n+2\alpha_2}\right)^{r(k - t)}\\
\le &S_1T_1^kn^{\frac{1-\xi}{2} k}n^{-(k-t)}n^{\xi k}n^{\sqrt{k-t}}n^{-r(k-t)}\\
\le &S_1{T_1}^kn^{-\frac 3 4 rk},
\end{split}
\end{align}
where $S_1$ and $T_1$ are large enough constants.\\
When $t < k < \max\left\{\frac{2(1+\xi)+4r}r t, t + \frac 4 {(1-\xi)^2}\right\},$ by the boundedness of $k$, (\ref{chisquaredRt}), (\ref{chisquaredRk}) and (\ref{chisquaredRk-t}), the first part of (\ref{mkt2}) will be bounded by
\begin{align} \label{lm5proof2}
\begin{split}
& BA^k\left(\frac V{\epsilon_n^2}\right)^{rk}k^{k}n^{-(k-t)}\frac{\{R_t^* + 2\alpha_2\}^{\frac n 2 + rt + \alpha_1}}{\{R_k^* + 2\alpha_2\}^{\frac n 2 + rk + \alpha_1}}\\
\le & BA^k\left(\frac V{\epsilon_n^2}\right)^{rk}k^{k}n^{-(k-t)}\left(1 + \frac{\frac{R_t^* - R_k^*}{\sigma_0^2}}{\frac{R_u^*}{\sigma_0^2}}\right)^{\frac n 2 + rk + \alpha_1}{\left(R_t^*+2\alpha_2\right)}^{r(t - k)}\\
\le&BA^k(\log n)^{2rk}n^{-(k-t)} \exp\left\{\frac{(k - t) + \sqrt{k-t}\log n}{n - k - \sqrt{n-k}\log n}\left(\frac n 2 + rk + \alpha_1\right)\right\} \\
&\times \left({n - t - \sqrt{n-t}\log n + 2\alpha_2}\right)^{r(t - k)} \\
\le& S_2T_2^{k-t}(\log n)^{2rk}n^{-(k-t)}n^{\sqrt{k-t}}n^{-r(k-t)}\\
\le &S_2T_2^{k-t}n^{-\frac 3 4 r(k-t)},
\end{split}
\end{align}
for large enough constants $S_2$, $T_2$, since $\sqrt{k-t} \le k-t$ and $(\log n)^{2rk}  < n^{\frac 1 4 r} \le  n^{\frac14 r(k-t)}$ for large enough $n$, say $n > N_3^\prime$ (not depending on $k$).
\\
\noindent
Now for the second part of (\ref{mkt2}), when $\max\left\{\frac {16} {r^2} + t, \frac{1+\frac 3 4 r}{ r}t\right\} \le k \le n^\xi,$ we have $\sqrt{k - t} \le \frac r 4 (k-t)$ and $(1+\frac 3 4 r)t \le \frac 1 2 rk.$ Again using (\ref{chisquaredRk}), (\ref{chisquaredRk-t}) and Assumption \ref{assumptionmodelsize}, for large enough $n$, say $n > N_4^\prime$ (not depending on $k$), we get
\begin{align} \label{lm5proof3}
\begin{split}
&BA^kk^{(r+1)k}n^{-(r+1)(k-t)-\frac 3 4 rk - rt}\frac{\{R_t^* + 2\alpha_2\}^{\frac n 2 + rt + \alpha_1}}{\{R_k^* + 2\alpha_2\}^{\frac n 2 + \alpha_1}}\\
\le & BA^kn^{\xi(r+1)k}n^{-(r+1)(k-t)-\frac 3 4 rk - rt}\left(1 + \frac{\frac{R_t^* - R_k^*}{\sigma_0^2}}{\frac{R_k^*}{\sigma_0^2}}\right)^{\frac n 2 + rt + \alpha_1}{\left(R_k^*+2\alpha_2\right)}^{rt}\\
\le & BA^kn^{\xi(r+1)k}n^{-(r+1)(k-t)-\frac 3 4 rk - rt}\exp\left\{\frac{(k - t) + \sqrt{k-t}\log n}{n - k - \sqrt{n-k}\log n}\left(\frac n 2 + rt + \alpha_1\right)\right\} \\
&\times \left({n - k - \sqrt{n-k}\log n + 2\alpha_2}\right)^{rt}\\
\le & S_3T_3^kn^{\xi(r+1)k}n^{-(r+1)(k-t)-\frac 3 4 rk - rt}n^{\sqrt{k-t}}n^{rt}\\
\le & S_3T_3^kn^{-(1-\xi)(r + 1)k},	
\end{split}
\end{align}
for large enough constants $S_3$ and $T_3$.\\
\noindent
When $t < k < \max\left\{\frac {16} {r^2} + t, \frac{1+\frac 3 4 r}{\frac 1 2 r}t\right\}, $ following the exact arguments in (\ref{lm5proof2}), using the boundedness of $k$ and $\sqrt{k - t} \le k - t$, it follows that for large enough $n$, say $n > N_5^\prime$ (not depending on $k$), the second part of (\ref{mkt2}) is bounded by $S_4T_4^kn^{-r(k-t)},$ for large enough constants $S_4$ and $T_4$.

Combining all cases and letting $N^{\prime\prime} = \max\{N, N_1^\prime, N_2^\prime,N_3^\prime,N_4^\prime, N_5^\prime\}$ (not depending on $k$), we have
\begin{align}
\frac{m_{\bm k}(\bm y_n)}{m_{\bm t}(\bm y_n)} < S^{\prime}(T^{\prime})^{k-t}n^{-\min\left\{\frac 3 4, 1-\xi\right\} r(k-t)},
\end{align}	
for $n > N^{\prime\prime}$, where $S^\prime = \max\{S_1,S_2,S_3,S_4\}$ and $T^\prime = \max\{T_1,T_2,T_3,T_4\}$.

\section{Proof of Lemma \ref{lm6} to \ref{lm8}}
It follows from Lemma \ref{lm1} that, under the complexity prior, for large enough $n > N$ (not depending on $k$),
\begin{align} \label{pr_complexity}
\frac{\pi(\bm k |\bm y_n)}{\pi(\bm t | \bm y_n)} &\le c_1^{t-k}p^{c_2(t-k)} BA^k\left(\frac V{\epsilon_n^2}\right)^{rk}k^{k}n^{-(k-t)}\frac{\{R_t^* + 2\alpha_2\}^{\frac n 2 + rt + \alpha_1}}{\{R_k^* + 2\alpha_2\}^{\frac n 2 + rk + \alpha_1}} \nonumber\\
&+ c_1^{t-k}p^{c_2(t-k)}BA^kk^{(r+1)k}n^{-(r+1)(k-t)-\frac 3 4 rk - rt} \frac{\{R_t^* + 2\alpha_2\}^{\frac n 2 + rt + \alpha_1}}{\{R_k^* + 2\alpha_2\}^{\frac n 2 + \alpha_1}},
\end{align}
where $A = A_1 \times M^\prime$ and $B = B_1 \times M^\prime$, $A_1$, $B_1$, $m^\prime$ are large enough constants.\\
When $\bm k \subset \bm t$, note that $\bm u = \bm k \cup \bm t = \bm t$. Therefore, for large enough $n$, say $n > N_1$ (not depending on $k$), $\frac V {\epsilon_n^2} < (\log n)^2.$ By following the similar arguments in (\ref{finitesizeratio}), we obtain
\begin{align} \label{finiteR_ratio}
P\left(\exp\left[-\frac{\left(R_k^* - R_t^*\right)}{6\sigma_0^2}\right] > p^{-3(c_2+r)t}\right) \le &P\left(\frac{R_k^* - R_u^*}{\sigma_0^2} < 18(c_2+r)t\log p\right) \nonumber\\
< &P\left((Z-\sqrt{\lambda})^2 < 18(c_2+r)t\log p\right) \nonumber\\
< &P\left(Z > \sqrt{\lambda} - \sqrt{18(c_2+r)t\log p}\right) \nonumber\\
< &e^{- \frac{n\epsilon_n\delta^2}{4\sigma_0^2}} \rightarrow 0, \mbox{ as } n \rightarrow \infty.
\end{align} 
Next, consider the first term in (\ref{pr_complexity}). With probability tending to 1, it follows from $1-x \le e^{-x}$, and (\ref{finiteR_ratio}) that, for large enough $n > N_1^{\prime\prime}$ (not depending on $k$) and large enough constant $M_1$, we have
\begin{align} \label{thm4proof1}
&c_1^{t-k}p^{c_2(t-k)} BA^k\left(\frac V{\epsilon_n^2}\right)^{rk}k^{k}n^{-(k-t)}\frac{\{R_t^* + 2\alpha_2\}^{\frac n 2 + rt + \alpha_1}}{\{R_k^* + 2\alpha_2\}^{\frac n 2 + rk + \alpha_1}} \nonumber\\
\le & M_1p^{c_2(t-k)}\left(\frac V{\epsilon_n^2}\right)^{rk}k^{k}n^{-(k-t)}{\left(R_t^*+2\alpha_2\right)}^{r(t - k)}\left(1 - \frac{\frac{R_k^* - R_t^*}{\sigma_0^2}}{\frac{R_k^* + 2\alpha_2}{\sigma_0^2}}\right)^{\frac n 2 + rk + \alpha_1} \nonumber \\
\le& M_1p^{c_2(t-k)}(\log n)^{2rk}n^{-(k-t)} \left({n - t - \sqrt{n-t}\log n + 2\alpha_2}\right)^{r(t - k)} \nonumber \\ & \times \exp\left\{-\frac{\frac{R_k^* - R_t^*}{\sigma_0^2}}{n - k + \sqrt{n-k}\log n + 2\alpha_2}\left(\frac n 2 + rk + \alpha_1\right)\right\}\nonumber \\
\le & M_1p^{c_2t}n^{(r+1)t}\exp\left\{-\frac{R_k^* - R_t^*}{6\sigma_0^2}\right\}\nonumber \\
\le &M_1p^{-2c_2t} \rightarrow 0, \quad \mbox{as } n \rightarrow \infty.
\end{align}
Now consider the second term in (\ref{pr_complexity}). When $\bm k \subset \bm t$, for large enough $n > N_1^{\prime\prime}$ (not depending on $k$), we obtain, 
\begin{align} \label{thm4proof2}
&c_1^{t-k}p^{c_2(t-k)}BA^kk^{(r+1)k}n^{-(r+1)(k-t)-\frac 3 4 rk - rt} \frac{\{R_t^* + 2\alpha_2\}^{\frac n 2 + rt + \alpha_1}}{\{R_k^* + 2\alpha_2\}^{\frac n 2 + \alpha_1}} \nonumber \\
\le & M_1p^{c_2t} n^t (R_t^* + 2\alpha_2)^{rt}\left(1 - \frac{\frac{R_k^* - R_t^*}{\sigma_0^2}}{\frac{R_k^* + 2\alpha_2}{\sigma_0^2}}\right)^{\frac n 2 + \alpha_1} \nonumber \\
\le & M_1p^{c_2t}n^{(r+1)t}\exp\left\{-\frac{R_k^* - R_t^*}{6\sigma_0^2}\right\}\le M_1 p^{-2c_2t} \rightarrow 0, \quad \mbox{as } n \rightarrow \infty.
\end{align}
Therefore, when $\bm k \subset \bm t$, for large enough $n > N_1^{\prime\prime}$ (not depending on $k$), with probability tending to 1,
\begin{align} \label{thm4proof3}
\frac{\pi(\bm k |\bm y_n)}{\pi(\bm t | \bm y_n)}  \le 2M_1 p^{-2c_2t} \rightarrow 0, \quad \mbox{as } n \rightarrow \infty.
\end{align}
When $\bm k \supset \bm t$, it follows from Lemma \ref{lm5} and (\ref{pr_complexity}) that, for large enough $n > N^{\prime\prime}$ (not depending on $k$),
\begin{align}  \label{thm4proof4}
\frac{\pi(\bm k |\bm y_n)}{\pi(\bm t | \bm y_n)} \le c_1^{-(k-t)}p^{-c_2(k-t)} \rightarrow 0, \quad \mbox{as } n \rightarrow \infty.
\end{align}
Next, when $\bm k \nsubseteq \bm t,$ $\bm k \nsupseteq \bm t$ and $\bm k \neq \bm t$, denote $\bm u = \bm k \cup \bm t$. First we discuss the scenario when $k \le t$. It follows from (\ref{pr_complexity}) and $R_u^* \le R_k^*$ that, for large enough $n > N$ (not depending on $k$), 
\begin{align}
\frac{\pi(\bm k |\bm y_n)}{\pi(\bm t | \bm y_n)} &\le c_1^{t-k}p^{c_2(t-k)} BA^k\left(\frac V{\epsilon_n^2}\right)^{rk}k^{k}n^{-(k-t)}\frac{\{R_t^* + 2\alpha_2\}^{\frac n 2 + rt + \alpha_1}}{\{R_u^* + 2\alpha_2\}^{\frac n 2 + rk + \alpha_1}} \nonumber\\
&+ c_1^{t-k}p^{c_2(t-k)}BA^kk^{(r+1)k}n^{-(r+1)(k-t)-\frac 3 4 rk - rt} \frac{\{R_t^* + 2\alpha_2\}^{\frac n 2 + rt + \alpha_1}}{\{R_u^* + 2\alpha_2\}^{\frac n 2 + \alpha_1}}.
\end{align}
By following the exact same arguments that lead up to (\ref{thm4proof3}), we can show for large enough $n > N_2^{\prime\prime}$ (not depending on $k$) and large enough constant $M_2 > 2M_1$, 
\begin{align}  \label{thm4proof5}
\frac{\pi(\bm k |\bm y_n)}{\pi(\bm t | \bm y_n)}  \le M_2p^{-2c_2t}  \rightarrow 0, \quad \mbox{as } n \rightarrow \infty.
\end{align}
Now if $k > t$, by (\ref{pr_complexity}), for large enough $n > N$, we obtain
\begin{align}
&\frac{\pi(\bm k |\bm y_n)}{\pi(\bm t | \bm y_n)} \nonumber\\
\le& c_1^{t-k}p^{c_2(t-k)} BA^k\left(\frac V{\epsilon_n^2}\right)^{rk}k^{k}n^{-(k-t)}\frac{\{R_t^* + 2\alpha_2\}^{\frac n 2 + rt + \alpha_1}}{\{R_u^* + 2\alpha_2\}^{\frac n 2 + rk + \alpha_1}}\frac{\{R_u^* + 2\alpha_2\}^{\frac n 2 + rk + \alpha_1}}{\{R_k^* + 2\alpha_2\}^{\frac n 2 + rk + \alpha_1}} \nonumber\\
&+ c_1^{t-k}p^{c_2(t-k)}BA^kk^{(r+1)k}n^{-(r+1)(k-t)-\frac 3 4 rk - rt} \frac{\{R_t^* + 2\alpha_2\}^{\frac n 2 + rt + \alpha_1}}{\{R_u^* + 2\alpha_2\}^{\frac n 2 + \alpha_1}}\frac{\{R_u^* + 2\alpha_2\}^{\frac n 2 + \alpha_1}}{\{R_k^* + 2\alpha_2\}^{\frac n 2 + \alpha_1}}.
\end{align}
Following the similar arguments in (\ref{lm3proof2}), we can show for large enough $n > N_3^{\prime \prime}$ (not depending on $k$), with probability tending to 1, 
\begin{align}  \label{thm4proof6}
\frac{\pi(\bm k |\bm y_n)}{\pi(\bm t | \bm y_n)} \le c_3^{-(k-t)}p^{-c_2k} \rightarrow 0, \quad \mbox{as } n \rightarrow \infty.
\end{align}  
for some constant $c_3 > 0$.  
\end{document}